\theoremstyle{definition}
\newtheorem{thm}{Theorem}[section]
\newtheorem{prop}[thm]{Proposition}
\newtheorem{cor}[thm]{Corollary}
\newtheorem{lem}[thm]{Lemma}
\newcommand{\lie}[1]{\mathfrak{#1}}
\newcommand{\cal}[1]{\mathcal{#1}}
\begin{document}
 
\title[An Integral Basis for the Onsager Algebra]{An Integral Basis for the Universal Enveloping Algebra of the Onsager Algebra}
  
\author{Angelo Bianchi}
\email{acbianchi@unifesp.br}
\address{Universidade Federal de S\~ao Paulo }
\author{Samuel Chamberlin}
\email{samuel.chamberlin@park.edu}
\address{Park University, 8700 River Park Drive Box No. 30, Parkville, MO 64152}

\begin{abstract}
We construct an integral form for the universal enveloping algebra of the \textit{Onsager algebra} and an explicit integral basis for this integral form. We also formulate straightening identities among some products of basis elements.

\end{abstract}

\maketitle

MSC2020: Primary 17B65; Secondary 17B05

\tableofcontents

\section*{Introduction}

The focus of this paper is the Onsager algebra, which took on the name of the Nobel Prize winning Lars Onsager (1903-1976). This algebra appeared for the first time in Onsager's solution of the two-dimensional Ising model in a zero magnetic field,  \cite{O44}. In this work, two non-commuting matrices appeared related to the transfer matrix associated to this model. By analyzing the structure of the algebra generated by these matrices, Onsager derived a complex Lie algebra. There are a few different realizations of the Onsager algebra. We refer the reader to \cite{el} for a survey with details. The realization used here is as an equivariant map algebra. Equivariant map algebras have the form $\lie g_\mathcal A:=(\mathfrak{g}\otimes_{\mathbb C}\mathcal{A})^\Gamma$, where $\mathfrak{g}$ is a finite-dimensional complex simple Lie algebra, $\mathcal{A}$ is a commutative, associative, unital $\mathbb C$-algebra, and $\Gamma$ is a group acting on $\lie g$ and $\cal A$ (and hence diagonally on $\lie g_\mathcal A$). 

We highlight the work of Roan \cite{R91} who first investigated the relationship between the Onsager algebra and the $\mathfrak{sl}_2$-loop algebra, folowed by Benkart and  Terwilliger \cite{BT07} and  Hartwig and Terwilliger \cite{HT07}. Since then, a few authors have written papers on this topic, see for instance \cite{DR00,E07}.

Suitable integral forms and bases for the universal enveloping algebras of the complex simple finite-dimensional Lie algebras were formulated by Kostant in 1966 (cf. \cite{K}), after Chevalley investigated integral forms for the classical Lie algebras in 1955. This led to the
construction of the classical Chevalley groups. In 1978, Garland formulated useful integral forms and bases for the universal enveloping algebras of the (untwisted) loop algebras, \cite{G}. Mitzman extended Garland's integral bases to the universal enveloping algebras of the twisted loop algebras in 1983, \cite{M}. The second author formulated suitable integral forms and bases for the universal enveloping algebras of the (untwisted) map algebras and (untwisted) map superalgebras, \cite{BC,C}.

Once one has access to suitable integral forms and bases for a particular Lie algebra one can study its representation theory in positive characteristic via its hyperalgebra. This was done for the (untwisted) hyper-loop algebras by Jakeli\'c and Moura using Garland's integral bases, \cite{JM1}. The first author and Moura extended this to the twisted hyper-loop algebras using Mitzman's integral bases, \cite{BM}. The authors extended these results to the (untwisted) hyper-multiloop, hyper-multicurrent, and hyper-map algebras this year using the integral bases formulated by the first author, \cite{BiC1,BiC2}.

When we view the Onsager algebra as an equivariant map algebra, the defining automorphism (the standard Chevalley involution) is not an isomorphism of the Dynkin diagram. No integral forms for any equivariant map algebras have been formulated in such a case in the literature.

Section \ref{pre} is dedicated to the algebraic preliminaries, including a review of the Onsager algebra. In Section \ref{main}, we construct an integral form and an integral basis for the Onsager algebra, the main result of the paper is stated as Theorem \ref{thm}. Section \ref{proof} contains all relevant proofs for the main result of the paper, which includes the necessary straightening identities. 

\

\noindent\textbf{Note on the ArXiv version}  For the interested reader, the .tex file of the ArXiv version of this paper includes some details of straightforward proofs omitted in the .pdf file. These details can be displayed by activating the \texttt{details} toggle in the .tex file and recompiling it.

\section{Preliminaries}\label{pre}

Throughout this work we denote by $\mathbb C$, $\mathbb Z$, $\mathbb Z_+$ and $\mathbb N$ the sets of complex numbers, integers, nonnegative integers and positive integers, respectively. 

\

If $\mathcal{A}$ is an  $\mathbb C$-algebra, an \textbf{integral form} $\mathcal{A}_{\mathbb Z}$ of $\mathcal{A}$ is a ${\mathbb Z}$-algebra such that $\mathcal{A}_{\mathbb Z}\otimes_{\mathbb Z}\mathbb C=\mathcal{A}$. An \textbf{integral basis} for $\mathcal{A}$ is a ${\mathbb Z}$-basis for $\mathcal{A}_{\mathbb Z}$.

\subsection{The \texorpdfstring{$\lie{sl}_2$}{}-algebra, its loop algebra, and a Chevalley involution} \label{sl2}

Let $\lie{sl}_2$ be the set of complex traceless order two matrices with Lie $\mathbb C$-algebra structure with bracket given by the commutator. Recall the standard basis of $\mathfrak{sl}_2$,
\begin{align*}
    \left\{ x^+ = \left( \begin{array}{cc} 0 & 1 \\ 0 & 0 \end{array} \right), \ h = \left( \begin{array}{cc} 1 & 0 \\ 0 & -1 \end{array} \right), \ x^- = \left( \begin{array}{cc} 0 & 0 \\ 1 & 0 \end{array} \right) \right\},
\end{align*}
satisfying $[x^+,x^-] = h$, $[h,x^+] = 2x^+$, and $[h,x^-] = -2x^-$.

In what follows an unadorned tensor means a tensor product over the field $\mathbb C$. Let $\mathbb C[t,t^{-1}] $ be the set of complex Laurent polynomials in one variable. Then the (untwisted) loop algebra of $\lie{sl}_2$ is the Lie algebra $\widetilde{\lie{sl}_2}:=\lie{sl}_2\otimes \mathbb C[t,t^{-1}] $ with Lie bracket given by bilinearly extending the bracket 
$$[z\otimes f,z'\otimes g]=[z,z']\otimes fg,$$
where $z,z'\in\lie{sl}_2$ and $f,g\in\mathbb C [t,t^{-1}]$.

 The \emph{Chevalley involution} on $\mathfrak{sl}_2$ is defined by the map $$M \mapsto \overline{M} := \left( \begin{matrix} 0 & 1 \\ 1 & 0 \end{matrix} \right) M \left(\begin{matrix} 0 & 1 \\ 1 & 0 \end{matrix}\right) $$ for all $M \in \mathfrak{sl}_2$. Therefore, under this involution we have
$$x^+ \mapsto   x^-, \quad x^- \mapsto x^+, \quad  h \mapsto -h.$$
Notice that the Chevalley involution is an involution defined as an automorphism of order two.

The Chevalley involution on $\mathfrak{sl}_2$ induces an involution on $\widetilde{\mathfrak{sl}_2}$, also called the \textit{Chevalley involution} on $\widetilde{\mathfrak{sl}_2}$, and denoted by $\omega$ where
$$ \omega(g\otimes f(t)) := \overline{g}\otimes f(t^{-1}) , \quad \text{for } f(t) \in \mathbb C[t,t^{-1}],\ g \in \mathfrak{sl}_2.$$

\subsection{Universal enveloping algebras} 

For a Lie algebra $\lie a$, we denote by $U(\lie a)$ the corresponding universal enveloping algebra of the Lie algebra $\lie a$. Given $u\in U(\lie a)$ and $k\in{\mathbb Z}$ define the divided powers and binomials of $u$, respectively, as follows: $u^{(k)}=\binom{u}{k}=0$, if $k<0$, and
$$u^{(k)}:=\frac{u^k}{k!}\textnormal{ and }\binom{u}{k}:=\frac{u(u-1)\dots(u-k+1)}{k!},$$
if $k\geq0$.

Define $T^0(\lie a):=\mathbb C$, and for all $j\geq1$, define $T^j(\lie a):=\lie a^{\otimes j}$, $T(\lie a):=\bigoplus_{j=0}^\infty T^j(\lie a)$, and $T_j(\lie a):=\bigoplus_{k=0}^jT^k(\lie a)$. Then, set $U_j(\lie a)$ to be the image of $T_j(\lie a)$ under the canonical surjection $T(\lie a)\to U(\lie a)$, and for any $u\in U(\lie a)$ \emph{define the degree of $u$} by $$\deg u:=\min_{j}\{u\in U_j(\lie a)\}.$$

\subsection{The Onsager Algebra and one of its historical realizations}

We begin by with the original definition of the Onsager algebra and its relation to $\widetilde{\lie{sl_2}}$. We use the notation in \cite{el} instead of the original notation in \cite{O44}. 

\ 

The Onsager algebra, denoted $\mathcal{O}$, is the (nonassociative) $\mathbb C$-algebra with basis $\{A_m, G_l ~ | ~ m \in \mathbb{Z}, l \in \mathbb{N}_+\}$ and antisymmetric product given by:
\begin{align*}
	[A_l,A_m] &= 2G_{l-m}, \text{ for } l > m,  \\
  [G_l,A_m] &= A_{m+l} - A_{m-l},   \\
  [G_l,G_m] &= 0.
\end{align*}

\

By considering the $\mathbb C$-linear map $\gamma:\mathcal{O} \rightarrow \widetilde{\lie{sl}_2}$ defined by 
\begin{align}\label{onsager1}
A_m\longmapsto x^+\otimes t^m + x^-\otimes t^{-m}\qquad\qquad G_l\longmapsto\frac{1}{2}\left(h\otimes (t^l-t^{-l})\right)    
\end{align}
for $m\in\mathbb{Z}$ and $l\in\mathbb{N}$, one can see that the Onsager algebra is isomorphic to the Lie subalgebra of $\widetilde{\lie{sl}_2}$ fixed by the Chevalley involution $\omega$ introduced in the Subsection \ref{sl2} (see details in \cite{R91}).

\subsection{The Onsager Algebra as an equivariant map algebra.}

Following the language of equivariant map algebras as in \cite{NSS2}, let $\Gamma:=\langle\sigma\rangle\cong{\mathbb Z}_2$ be a group of order two. We act on $\lie{sl}_2$ by the involution: 
\begin{align*}
    \sigma\cdot(x^\pm)=-x^\mp\\
    \sigma\cdot(h)=-h,
\end{align*}
and on $\mathbb C[t, t^{-1}]$ by
$$\sigma\cdot(t) = t^{-1}.$$
This induces the diagonal action of $\Gamma$ on $\widetilde{\lie{sl}_2}$ given by  
$$\sigma\cdot(g\otimes f)=\sigma\cdot(g)\otimes\sigma\cdot(f),$$ for all $g\in\lie{sl}_2$ and all $f\in \mathbb C[t,t^{-1}] $.
Therefore, the Onsager algebra $\mathcal O$ is also isomorphic to the Lie subalgebra of $\widetilde{\lie{sl}_2}$ which is fixed by this action of $\Gamma$, according to \cite[Remark 3.11 and Lemma 3.3]{NSS}.

\section{A better realization of the Onsager Algebra \texorpdfstring{$\mathcal O$}{} } \label{main}

The realization of the Onsager algebra in \eqref{onsager1} is not suitable to construct an integral form for $\mathcal O$ due to the intrinsic difficulty to deal with brackets between elements in \eqref{onsager1} and, hence, to produce useful identities and  $\mathbb Z$-subalgebras in $U(\mathcal O)$. The following realization of $\mathcal O$ allows straightening identities and the construction of  suitable subalgebras, which then gives rise to a ``triangular type decomposition'' and an integral form for $U(\mathcal O)$. 
We start with the following basis for $\lie{sl}_2$ and then build the appropriate basis for $\mathcal O$ from this one.

We define the following elements in $\lie {sl}_2$: $$h^\Gamma:=-i\left(x^+-x^-\right), \quad x_+^\Gamma:=\frac{1}{2}\left(x^++x^--ih\right), \quad \text{ and } \quad x_-^\Gamma:=\frac{1}{2}\left(x^++x^-+ih\right).$$ 
Then, $\mathrm{span}\left\{h^\Gamma\right\}\cong\mathbb C$, $\lie{h}^*_\Gamma\cong\mathbb C$,  $\mathrm{span}\{x_-^\Gamma,h^\Gamma,x_+^\Gamma\}\cong\lie{sl}_2$, and $\sigma\left(x_\pm^\Gamma\right)=-x_\pm^\Gamma$.

Given $j,k,l\in{\mathbb Z}_+$, define
\begin{align*}
    h_k^\Gamma:=h^\Gamma\otimes\left(t^k+t^{-k}\right),\quad  
    x_{j}^{\Gamma,+}:=x_+^\Gamma\otimes\left(t^j-t^{-j}\right), \quad \text{ and } \quad x_{l}^{\Gamma,-}:=x_-^\Gamma\otimes\left(t^l-t^{-l}\right),
\end{align*}
and note that $h_{-k}^\Gamma=h_k^\Gamma$, $x_{-j}^{\Gamma,\pm}=-x_{j}^{\Gamma,\pm}$, and $x_{0}^{\Gamma,\pm}=0$. 

Therefore, by defining  
$$u_j:=\left(x^+-x^-\right)\otimes\left(t^j+t^{-j}\right), \quad v_k:=\left(x^++x^-\right)\otimes\left(t^k-t^{-k}\right), \quad \text{ and } \quad w_l:=h\otimes\left(t^l-t^{-l}\right),$$
for all $j,k,l\in{\mathbb Z}$, then, using a similar argument to that in \cite[Proposition 3.2.2]{el}, we see that the set $\left\{u_j,v_k,w_l\ |\ j\in{\mathbb Z}_+,\ k,l\in\mathbb N\right\}$ is a $\mathbb C$-basis for $\cal O$ and, hence, the relations $$u_j=ih_j^\Gamma, \quad  v_k=x_k^{\Gamma,+}+x_k^{\Gamma,-},\text{ and}\quad w_l=-i\left(x_l^{\Gamma,-}-x_l^{\Gamma,+}\right)$$ demonstrate that the set $\mathcal B:=\left\{x_{l}^{\Gamma,-},h_k^\Gamma,x_{j}^{\Gamma,+}\ |\ k\in{\mathbb Z}_+,\ j,l\in\mathbb N\right\}$ spans $\cal O$. Furthermore, since $\cal B$ is clearly linearly independent it is a basis for $\cal O$.

 We can define an ordering, $\leq$, on $\cal B$ such that $x_{l}^{\Gamma,-}\leq h_{k}^{\Gamma}\leq x_{j}^{\Gamma,+}$ whenever $k\in\mathbb Z_+$ and $j,l\in\mathbb N$, $x_{l}^{\Gamma,\pm}\leq x_{j}^{\Gamma,\pm}$ whenever $l,j\in\mathbb N$ with $l\leq j$, and $h_{k}^{\Gamma}\leq h_{m}^{\Gamma}$ whenever $k,m\in\mathbb Z_+$ with $k\leq m$. Note that
\begin{align*}
    \left[x_{j}^{\Gamma,+},x_{l}^{\Gamma,-}\right]
    &=h^\Gamma_{j+l}-h^\Gamma_{j-l}=h^\Gamma_{j+l}-h^\Gamma_{|j-l|};\\
    \left[h_k^\Gamma,x_{j}^{\Gamma,+}\right]
    &=2\left(x^{\Gamma,+}_{j+k}+x^{\Gamma,+}_{j-k}\right)=2\left(x^{\Gamma,+}_{j+k}-x^{\Gamma,+}_{k-j}\right);\\
    \left[h_k^\Gamma,x_{l}^{\Gamma,-}\right]
    &=-2\left(x^{\Gamma,-}_{l+k}-x^{\Gamma,-}_{k-l}\right)=-2\left(x^{\Gamma,-}_{l+k}+x^{\Gamma,-}_{l-k}\right).
\end{align*}

\subsection{An integral form and integral basis for \texorpdfstring{$U(\mathcal O)$}{}.}

    Define $U_{\mathbb Z}(\mathcal O)$ to be the ${\mathbb Z}$-subalgebra of $U(\mathcal O)$ generated by
    $$\left\{\left(x_k^{\Gamma,\pm}\right)^{(s)}\ \bigg|\ k\in\mathbb N,\ s\in{\mathbb Z}_+\right\},$$
    $U_{\mathbb Z}^+(\mathcal O)$ and $U_{\mathbb Z}^-(\mathcal O)$ to be the ${\mathbb Z}$-subalgebras of $U_{\mathbb Z}(\mathcal O)$ generated respectively by
$$\left\{\left(x_k^{\Gamma,+}\right)^{(s)}\ \bigg|\ k\in\mathbb N,\ s\in{\mathbb Z}_+\right\}\textnormal{ and }\left\{\left(x_k^{\Gamma,-}\right)^{(s)}\ \bigg|\ k\in\mathbb N,\ s\in{\mathbb Z}_+\right\},$$
and $U_{\mathbb Z}^0(\mathcal O)=U\left(\left(\lie g^0\otimes\mathbb C [t,t^{-1}]\right)^\Gamma\right)\bigcap U_{\mathbb Z}(\mathcal O)$.

We need the following definitions in order to give our integral basis and to state our straightening identities.

Given $j,l\in\mathbb N$, define $\Lambda^\Gamma_{j,l,1}=-\left(h^\Gamma_{j+l}-h^\Gamma_{j-l}\right)$ and $D^{\Gamma,\pm}_{u,1}(j,l)\in U(\mathcal O)$, for $u\in{\mathbb Z}_+$, recursively as follows:
\begin{align*}
    D^{\Gamma,+}_{0,1}(j,l)
    &=\left(x_{j}^{\Gamma,+}\right);\\
    D^{\Gamma,-}_{0,1}(j,l)
    &=\left(x_{l}^{\Gamma,-}\right);\\
    D^{\Gamma,\pm}_{u,1}(j,l)
    &=\pm\frac{1}{2}\left[D^{\Gamma,\pm}_{u-1,1}(j,l),\Lambda_{j,l,1}^\Gamma\right].
\end{align*}

Furthermore, for $j,k,l\in\mathbb N$, set $p^{\Gamma}_k(j,l):=\left[x_{j}^{\Gamma,+},D^{\Gamma,-}_{k-1,1}(j,l)\right]$ and for $j,l\in\mathbb N$, and $k\in{\mathbb Z}$, define $\Lambda^\Gamma_{j,l,k}$ by equating coefficients in the following formal series: 
    $$\Lambda_{j,l}^\Gamma(u):=\sum_{r=0}^\infty\Lambda_{j,l,r}^\Gamma u^r=\exp\left(-\sum_{s=1}^\infty\frac{p_s^\Gamma(j,l)}{s}u^s\right).$$
In particular, $\Lambda^\Gamma_{j,l,k}=0$ for $k<0$, and $\Lambda^\Gamma_{j,l,0}=1$.

\begin{prop}
    Given $j,k,l\in\mathbb N$,
    \begin{equation*}
        \Lambda^\Gamma_{j,l,k}=-\frac{1}{k}\sum_{i=1}^kp_i^\Gamma(j,l)\Lambda^\Gamma_{j,l,k-i}.
    \end{equation*}
\end{prop}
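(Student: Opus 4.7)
The identity is the standard consequence of differentiating the generating function $\Lambda_{j,l}^\Gamma(u)$ with respect to $u$; the only substantive content is verifying that the coefficients in the exponent mutually commute, so that the classical formula $\frac{d}{du}\exp(P) = P'\exp(P)$ is valid in the formal power series ring $U(\mathcal O)[[u]]$. I would proceed in three short steps.

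First, I would check, by induction on $u$, that $D^{\Gamma,-}_{u,1}(j,l)$ always lies in the subspace $x_-^\Gamma\otimes\mathbb C[t,t^{-1}]$. The base case $D^{\Gamma,-}_{0,1}(j,l)=x_l^{\Gamma,-}$ is immediate, and the inductive step uses the bracket $[h_k^\Gamma, x_m^{\Gamma,-}] = -2(x_{m+k}^{\Gamma,-}+x_{m-k}^{\Gamma,-})$ together with the fact that $\Lambda^\Gamma_{j,l,1}$ is a $\mathbb Z$-linear combination of $h^\Gamma$-vectors. Hence
\begin{equation*}
    p_s^\Gamma(j,l) = \bigl[x_j^{\Gamma,+},\, D^{\Gamma,-}_{s-1,1}(j,l)\bigr] \in h^\Gamma\otimes\mathbb C[t,t^{-1}],
\end{equation*}
which is an abelian Lie subalgebra of $\mathcal O$. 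In particular, all the $p_s^\Gamma(j,l)$ pairwise commute in $U(\mathcal O)$, and therefore commute with each $\Lambda^\Gamma_{j,l,r}$ as well (the latter being polynomials in the former by the definition of $\Lambda_{j,l}^\Gamma(u)$).

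Second, with commutativity in hand, set $P(u):=-\sum_{s\geq 1}\frac{p_s^\Gamma(j,l)}{s}u^s$ so that $\Lambda_{j,l}^\Gamma(u)=\exp(P(u))$. Since $P(u)$ and $P'(u)$ commute, the standard identity
\begin{equation*}
    \frac{d}{du}\Lambda_{j,l}^\Gamma(u) = P'(u)\,\Lambda_{j,l}^\Gamma(u) = -\Bigl(\sum_{s\geq 1}p_s^\Gamma(j,l)\,u^{s-1}\Bigr)\Bigl(\sum_{r\geq 0}\Lambda^\Gamma_{j,l,r}\,u^r\Bigr)
\end{equation*}
holds in $U(\mathcal O)[[u]]$.

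Third, I would extract the coefficient of $u^{k-1}$ from both sides. The left-hand side contributes $k\,\Lambda^\Gamma_{j,l,k}$, and the right-hand side contributes $-\sum_{i=1}^{k}p_i^\Gamma(j,l)\,\Lambda^\Gamma_{j,l,k-i}$; dividing by $k$ gives the stated recursion. The only real obstacle is the commutativity check of the first step, which is why I would isolate it as a short lemma before performing the generating-function manipulation; everything else is formal.
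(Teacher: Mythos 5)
Your proof is correct and follows essentially the same route as the paper: differentiate the generating function $\Lambda_{j,l}^\Gamma(u)=\exp\left(-\sum_{s\geq1}\frac{p_s^\Gamma(j,l)}{s}u^s\right)$, multiply by $\Lambda_{j,l}^\Gamma(u)$, and equate coefficients of $u^k$, exactly as in the paper's adaptation of \cite[Lemma 3.2]{CP}. Your preliminary lemma that the $p_s^\Gamma(j,l)$ lie in the abelian subalgebra $h^\Gamma\otimes\mathbb C[t,t^{-1}]$ and hence commute is a point the paper leaves implicit (it follows from Proposition \ref{p_u}), so making it explicit is a welcome but not divergent addition.
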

\begin{proof}
    We use an argument similar to that in \cite[Lemma 3.2]{CP}. Differentiating both sides of 
    $$\log\Lambda^\Gamma_{j,l}(u)=-\sum_{s=1}^\infty\frac{p_s^\Gamma(j,l)}{s}u^s$$
    with respect to $u$ and then multiplying by $u$ gives
    $$u\frac{\left(\Lambda^\Gamma_{j,l}\right)'(u)}{\Lambda^\Gamma_{j,l}(u)}=-\sum_{s=1}^\infty p_s^\Gamma(j,l)u^s.$$
    Multiplying both sides of this equation by $\Lambda^\Gamma_{j,l}(u)$ gives
    $$\sum_{k=1}^\infty k\Lambda_{j,l,k}^\Gamma u^k=-\sum_{s=1}^\infty p_s^\Gamma(j,l)u^s\Lambda_{j,l}^\Gamma(u).$$
    Expanding $\Lambda_{j,l}^\Gamma(u)$ and the product on the right side and equating coefficients gives the result.
\end{proof}

Given a PBW monomial with respect to the order on $\mathcal B$, we construct an ordered monomial in the elements of the set
$$M:=\left\{\left(x^{\Gamma,+}_j\right)^{(r)},\Lambda^\Gamma_{j,l,k},\left(x^{\Gamma,-}_l\right)^{(s)}\ \bigg|\ j,l\in\mathbb N,\ k,r,s\in{\mathbb Z}_+\right\}$$
via the correspondence 
$$\left(x^{\Gamma,\pm}_j\right)^k\leftrightarrow\left(x^{\Gamma,\pm}_j\right)^{(k)}\quad\text{ and }\quad\left(p_1^\Gamma(l,m)\right)^{r}\leftrightarrow\Lambda^\Gamma_{l,m,r}.$$

\

The main goal of this paper is to prove the following theorem, whose proof is in Section \ref{proof} (see \cite{BC,C,G,H,M} for analogs in different settings).

\begin{thm}\label{thm}
    The subalgebra $U_{\mathbb Z}(\mathcal O)$ is a free ${\mathbb Z}$-module and the set of ordered monomials constructed from $M$ is a ${\mathbb Z}$ basis of $U_{\mathbb Z}(\mathcal O)$.
\end{thm}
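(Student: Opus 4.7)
The plan is to follow the classical Garland--Mitzman--Chamberlin template for integral PBW theorems in this setting, adapted to the equivariant Onsager situation. The proof splits naturally into (i) establishing an integral triangular decomposition $U_{\mathbb Z}(\mathcal O)=U_{\mathbb Z}^-(\mathcal O)\,U_{\mathbb Z}^0(\mathcal O)\,U_{\mathbb Z}^+(\mathcal O)$, (ii) showing that ordered monomials built from $M$ span each of these three factors over $\mathbb Z$, and (iii) checking that the resulting family is linearly independent. Linear independence will follow almost for free: the ordered monomials from $M$ reduce, modulo lower-degree terms and via the leading-term correspondence $(p_1^\Gamma(l,m))^r \leftrightarrow \Lambda^\Gamma_{l,m,r}$, to a PBW-type basis of $U(\mathcal O)$ with respect to the ordered basis $\mathcal B$, and $U_{\mathbb Z}(\mathcal O)$ embeds into $U(\mathcal O)$ by definition. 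So the real content is the spanning step.

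For the spanning step, I would first handle each of $U_{\mathbb Z}^{\pm}(\mathcal O)$ separately. Using the commutation formulas computed in Section~\ref{main}, such as $[h_k^\Gamma, x_j^{\Gamma,+}] = 2(x_{j+k}^{\Gamma,+} - x_{k-j}^{\Gamma,+})$, and the standard identity
\begin{equation*}
\bigl(x_j^{\Gamma,+}\bigr)^{(r)}\bigl(x_{j'}^{\Gamma,+}\bigr)^{(r')} = \bigl(x_{j'}^{\Gamma,+}\bigr)^{(r')}\bigl(x_j^{\Gamma,+}\bigr)^{(r)} + (\text{lower-degree terms})
\end{equation*}
in enveloping algebras, I would induct on total degree and on the index $j$ to rewrite any product of the generators $(x_j^{\Gamma,+})^{(s)}$ as a $\mathbb Z$-linear combination of monomials ordered according to the order on $\mathcal B$. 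The symmetric argument handles $U_{\mathbb Z}^-(\mathcal O)$. For $U_{\mathbb Z}^0(\mathcal O)$, I would show that the elements $\Lambda^\Gamma_{j,l,k}$ belong to $U_{\mathbb Z}(\mathcal O)$ and that their products can be reordered integrally; because the $\Lambda$-generating function is defined by exponentiating the $p_s^\Gamma(j,l)$, the recursion in the Proposition preceding the theorem reduces this to showing each $p_s^\Gamma(j,l)$ is an integer combination of the $h_m^\Gamma$, which follows by induction from the definition of $D^{\Gamma,\pm}_{u,1}(j,l)$ and the bracket formulas.

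The main obstacle, as in Garland's original argument, will be the key mixed straightening identity expressing a product of the form $(x_j^{\Gamma,+})^{(r)}(x_l^{\Gamma,-})^{(s)}$ as a $\mathbb Z$-linear combination of ordered monomials of the form $(x_l^{\Gamma,-})^{(b)}\cdot(\text{monomial in }\Lambda^\Gamma_{j,l,\bullet})\cdot(x_j^{\Gamma,+})^{(a)}$. This is precisely where the generating series $\Lambda_{j,l}^\Gamma(u)$ earns its keep: the expected identity should have the Garland form
\begin{equation*}
\bigl(x_j^{\Gamma,+}\bigr)^{(r)}\bigl(x_l^{\Gamma,-}\bigr)^{(s)} = \sum_{t\geq 0}\bigl(x_l^{\Gamma,-}\bigr)^{(s-t)}\,\bigl[\text{polynomial in }\Lambda^\Gamma_{j,l,k}\bigr]\,\bigl(x_j^{\Gamma,+}\bigr)^{(r-t)},
\end{equation*}
and verifying it with integer coefficients requires a careful induction on $\min(r,s)$, using the recursive definition of $D^{\Gamma,\pm}_{u,1}(j,l)$, the definition of $p_k^\Gamma(j,l)$, and the proposition relating $\Lambda^\Gamma_{j,l,k}$ to the $p_i^\Gamma(j,l)$. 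The key subtlety, specific to the Onsager setting as opposed to the loop algebra, is that the bracket formulas introduce terms with index $|j-l|$ or $k-j$, so one must be vigilant that the induction on indices closes up, which is why the ordering on $\mathcal B$ is set up as it is.

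Once the mixed identity and the two one-sided reordering identities are in hand, spanning follows by a standard induction on the length and degree of an unordered monomial: any product of generators of $U_{\mathbb Z}(\mathcal O)$ can be sorted first into $U_{\mathbb Z}^-(\mathcal O)\,U_{\mathbb Z}^0(\mathcal O)\,U_{\mathbb Z}^+(\mathcal O)$ order via the mixed identity, and then within each tensor factor via the one-sided identities, all with integer coefficients. Combined with the linear independence observation above, this establishes the theorem and simultaneously proves that the triangular decomposition $U_{\mathbb Z}(\mathcal O)\cong U_{\mathbb Z}^-(\mathcal O)\otimes_{\mathbb Z} U_{\mathbb Z}^0(\mathcal O)\otimes_{\mathbb Z} U_{\mathbb Z}^+(\mathcal O)$ holds as $\mathbb Z$-modules.
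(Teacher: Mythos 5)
Your proposal follows essentially the same route as the paper: linear independence via the PBW theorem over $\mathbb C$, and spanning by induction on degree using exactly the three kinds of straightening identities you describe (the two one-sided reorderings, which are in fact trivial here since the $x^{\Gamma,\pm}_\bullet$ among themselves and the $h^\Gamma_\bullet$ among themselves commute, plus the Garland-type mixed identity $\left(x_j^{\Gamma,+}\right)^{(r)}\left(x_l^{\Gamma,-}\right)^{(s)}=\sum D^{\Gamma,-}_{\bullet,\bullet}\Lambda^\Gamma_{j,l,\bullet}D^{\Gamma,+}_{\bullet,\bullet}$ together with the consolidation of repeated $\Lambda^\Gamma_{j,l,\bullet}$ factors). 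The real labor lives in the identities you defer to ``a careful induction,'' which is where the paper spends nearly all of its effort, but the architecture of the argument is the same.
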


This theorem implies the following:
\begin{align*}
    \mathbb C\otimes_{\mathbb Z} U_{\mathbb Z}(\mathcal O)&\cong U(\mathcal O),\\
    \mathbb C\otimes_{\mathbb Z} U_{\mathbb Z}^\pm(\mathcal O)&\cong U\left(\mathrm{span}\left\{x_j^{\Gamma,\pm}\ |\ j\in\mathbb N\right\}\right),\\
    \mathbb C\otimes_{\mathbb Z} U_{\mathbb Z}^0(\mathcal O)&\cong U\left(\mathrm{span}\left\{h_k^{\Gamma}\ |\ k\in{\mathbb Z}_+\right\}\right).\\
\end{align*}
In particular, $U_{\mathbb Z}(\mathcal O)$ is an \emph{integral form} of $U(\mathcal O)$.

\subsection{Identities for \texorpdfstring{$D^{\Gamma,\pm}_{u,v}(j,l)$}{}.}
Define $D^{\Gamma,\pm}_{u,v}(j,l)\in U(\mathcal O)$ recursively as follows:
\begin{align*}
    D^{\Gamma,\pm}_{u,v}(j,l)
    &=0\textnormal{ if }v<0;\\
    D^{\Gamma,\pm}_{u,0}(j,l)
    &=\delta_{u,0};\\
    D^{\Gamma,\pm}_{u,v}(j,l)
    &=\frac{1}{v}\sum_{i=0}^uD^{\Gamma,\pm}_{i,1}(j,l)D^{\Gamma,\pm}_{u-i,v-1}(j,l).
\end{align*}

The following proposition gives the $D^{\Gamma,\pm}_{u,v}(j,l)$ as the coefficients of a power series in the indeterminate $w$. The proof of the first equation is done by induction on $v$ and we omit the details. The second equation follows by the Multinomial Theorem.

\begin{prop}\label{Duv}
    For all $j,l,u,v\in\mathbb N$
    \begin{align*}
        D^{\Gamma,\pm}_{u,v}(j,l)
        &=\sum_{\substack{k_0,\ldots,k_u\in{\mathbb Z}_+\\k_0+\dots k_u=v\\k_1+2k_2+\dots+uk_u=u}}\left(D^{\Gamma,\pm}_{0,1}(j,l)\right)^{(k_0)}\dots\left(D^{\Gamma,\pm}_{u,1}(j,l)\right)^{(k_u)}\\
        &=\left(\left(\sum_{m\geq0}D^{\Gamma,\pm}_{m,1}(j,l)w^{m+1}\right)^{(v)}\right)_{u+v} 
    \end{align*}
    \hfill\qedsymbol
\end{prop}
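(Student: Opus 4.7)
The plan is to establish the first identity by induction on $v$ and then derive the second as a direct consequence of the Multinomial Theorem, with the $\pm$ cases handled symmetrically. A preliminary observation driving both steps is that the elements $D^{\Gamma,\pm}_{m,1}(j,l)$ for varying $m$ all lie in $x_\pm^\Gamma \otimes \mathbb C[t,t^{-1}]$. This one checks by a short induction on $m$ using the defining recursion $D^{\Gamma,\pm}_{m,1}(j,l) = \pm \tfrac{1}{2}[D^{\Gamma,\pm}_{m-1,1}(j,l), \Lambda^\Gamma_{j,l,1}]$ together with the bracket formulas $[h^\Gamma_k, x^{\Gamma,\pm}_j]$ listed in Section \ref{main}. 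Since $[x^\Gamma_\pm, x^\Gamma_\pm] = 0$, all the $D^{\Gamma,\pm}_{m,1}(j,l)$ commute pairwise, and this commutativity underlies everything that follows.

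For the base case $v = 0$, both sides equal $\delta_{u,0}$: the recursion gives this directly, while the constraints $\sum_m k_m = 0$ and $\sum_m m k_m = u$ on the right force all $k_m = 0$ and hence $u = 0$. For the inductive step I substitute the inductive hypothesis for $D^{\Gamma,\pm}_{u-i,v-1}(j,l)$ into
$$D^{\Gamma,\pm}_{u,v}(j,l) = \frac{1}{v}\sum_{i=0}^u D^{\Gamma,\pm}_{i,1}(j,l)\, D^{\Gamma,\pm}_{u-i,v-1}(j,l).$$
By commutativity, multiplying $D^{\Gamma,\pm}_{i,1}$ into a divided-power monomial $\prod_m \left(D^{\Gamma,\pm}_{m,1}\right)^{(k_m)}$ increments $k_i$ by one at the cost of a factor $k_i + 1$. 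Reindexing $k'_i = k_i + 1$ and interchanging the order of summation, each monomial labeled by $(k'_0, \ldots, k'_u)$ with $\sum_m k'_m = v$ and $\sum_m m k'_m = u$ picks up the total coefficient $\tfrac{1}{v}\sum_i k'_i = 1$, which is exactly the claimed formula.

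For the second identity, the commutativity established above yields
$$\left(\sum_{m\geq 0} D^{\Gamma,\pm}_{m,1}(j,l)\, w^{m+1}\right)^{(v)} = \frac{1}{v!}\left(\sum_{m\geq 0} D^{\Gamma,\pm}_{m,1}(j,l)\, w^{m+1}\right)^v,$$
and the Multinomial Theorem expands this $v$-th power as a sum over tuples $(k_0, k_1, \ldots)$ with $\sum_m k_m = v$, weighted by $w^{v + \sum_m m k_m}$ and by the multinomial coefficient $\tfrac{v!}{\prod_m k_m!}$. Picking out the coefficient of $w^{u+v}$ imposes $\sum_m m k_m = u$, which automatically truncates the tuple at index $u$, and dividing by $v!$ converts the ordinary products into the divided powers appearing in the first formula.

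The main obstacle is the combinatorial bookkeeping in the inductive step: verifying that the reindexing and sum-swap correctly absorb tuples for which some $k_i = 0$ and that $\tfrac{1}{v}\sum_i k'_i$ really evaluates to $1$ across the outer index $i$. Once this is cleared—and once commutativity of the $D^{\Gamma,\pm}_{m,1}(j,l)$ is verified—both statements reduce to formal power-series manipulation with no further Lie-algebraic input required.
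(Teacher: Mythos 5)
Your proposal is correct and follows essentially the same route as the paper's (omitted) proof: induction on $v$ via the defining recursion, with the increment-and-reindex bookkeeping on the exponents $k_i$ producing the factor $\tfrac{1}{v}\sum_i k_i'=1$, and the Multinomial Theorem for the generating-series form. Your explicit verification that the $D^{\Gamma,\pm}_{m,1}(j,l)$ all lie in $x_\pm^\Gamma\otimes\mathbb C[t,t^{-1}]$ and hence commute is a point the paper uses only implicitly, and it is a worthwhile addition.
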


It can be easily shown by induction that $D^{\Gamma,+}_{0,v}(j,l)=\left(x_{j}^{\Gamma,+}\right)^{(v)}$ and $D^{\Gamma,-}_{0,v}(j,l)=\left(x_{l}^{\Gamma,-}\right)^{(v)}$.

A straightforward calculation shows that
\begin{align}
    \left[\left(x_{k}^{\Gamma,+}\right),\Lambda_{j,l,1}^\Gamma\right]
    &=2\left(x_{k+j+l}^{\Gamma,+}\right)+2\left(x_{k-j-l}^{\Gamma,+}\right)-2\left(x_{k+j-l}^{\Gamma,+}\right)-2\left(x_{k-j+l}^{\Gamma,+}\right).\label{xk+Lambda_1}
\end{align}

The following proposition gives the $D^{\Gamma,\pm}_{u,1}(j,l)$ as ${\mathbb Z}$-linear combinations of the $\left(x^{\Gamma,\pm}_{n}\right)$.

\begin{prop}\label{Du1}
    For all $u\in{\mathbb Z}_+$ and $j,l\in\mathbb N$
    \begin{align}
        D^{\Gamma,+}_{u,1}(j,l)
        &=\sum_{k=0}^{\left\lfloor\frac{u-1}{2}\right\rfloor}\sum_{i=0}^{u+1}(-1)^{k+i}\binom{u}{k}\binom{u+1}{i}\left(x_{(u+1-2i)j+(u-2k)l}^{\Gamma,+}\right)\nonumber\\
        &+((u+1)\mod2)\sum_{i=0}^{\frac{u}{2}}(-1)^{\frac{u}{2}+i}\binom{u}{\frac{u}{2}}\binom{u+1}{i}\left(x_{(u+1-2i)j}^{\Gamma,+}\right)\label{D+u1};\\
        D^{\Gamma,-}_{u,1}(j,l)
        &=\sum_{k=0}^{\left\lfloor\frac{u-1}{2}\right\rfloor}\sum_{i=0}^{u+1}(-1)^{k+i}\binom{u}{k}\binom{u+1}{i}\left(x_{(u+1-2i)l+(u-2k)j}^{\Gamma,-}\right)\nonumber\\
        &+((u+1)\mod2)\sum_{i=0}^{\frac{u}{2}}(-1)^{\frac{u}{2}+i}\binom{u}{\frac{u}{2}}\binom{u+1}{i}\left(x_{(u+1-2i)l}^{\Gamma,-}\right)\label{D-u1}.
    \end{align}
\end{prop}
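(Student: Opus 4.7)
The plan is to identify $D^{\Gamma,+}_{u,1}(j,l)$ as the $u$-th iterate of a single operator acting on $x^{\Gamma,+}_j$, and then expand via the binomial theorem. By \eqref{xk+Lambda_1}, the linear map $T:=\tfrac12[\,\cdot\,,\Lambda^\Gamma_{j,l,1}]$ acts on $\mathrm{span}\{x^{\Gamma,+}_n\}_{n\in\mathbb Z}\subset\mathcal O$ by
\[
T(x^{\Gamma,+}_n)=x^{\Gamma,+}_{n+j+l}+x^{\Gamma,+}_{n-j-l}-x^{\Gamma,+}_{n+j-l}-x^{\Gamma,+}_{n-j+l},
\]
so the defining recursion gives $D^{\Gamma,+}_{u,1}(j,l)=T^u(x^{\Gamma,+}_j)$. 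I lift this to an auxiliary free space with basis $\{\widetilde x^{\Gamma,+}_n\}_{n\in\mathbb Z}$ (no relations imposed); the same formula there defines $\widetilde T=(E_j-E_{-j})(E_l-E_{-l})$, a product of two commuting shifted-difference operators, where $E_a$ is the shift $\widetilde x^{\Gamma,+}_n\mapsto \widetilde x^{\Gamma,+}_{n+a}$. A direct check shows that $\widetilde T$ preserves the ``relations subspace'' spanned by $\widetilde x^{\Gamma,+}_0$ and the $\widetilde x^{\Gamma,+}_n+\widetilde x^{\Gamma,+}_{-n}$, so $\widetilde T$ descends to $T$ on $\mathcal O$.

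Since the two factors commute, $\widetilde T^u=(E_j-E_{-j})^u(E_l-E_{-l})^u$, and two applications of the binomial theorem give, after projecting,
\[
D^{\Gamma,+}_{u,1}(j,l)=\sum_{i=0}^u\sum_{k=0}^u (-1)^{i+k}\binom{u}{i}\binom{u}{k}\,x^{\Gamma,+}_{(u+1-2i)j+(u-2k)l},
\]
interpreted throughout via $x^{\Gamma,+}_{-n}=-x^{\Gamma,+}_n$ and $x^{\Gamma,+}_0=0$. To recast this as \eqref{D+u1}, I introduce the involution $\sigma(i,k)=(u+1-i,u-k)$ on the extended index set $\{0,\dots,u+1\}\times\{0,\dots,u\}$: it has no absolute fixed points (since $2i=u+1$ and $2k=u$ cannot hold simultaneously), yet under the relation $x^{\Gamma,+}_{-n}=-x^{\Gamma,+}_n$ it pairs terms whose $x$-indices are negatives of each other. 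Pascal's identity $\binom{u+1}{i}=\binom{u}{i}+\binom{u}{i-1}$ together with the substitution $i\mapsto u+1-i$, $k\mapsto u-k$ rewrites the double sum above as $\tfrac12\sum_{i=0}^{u+1}\sum_{k=0}^u (-1)^{i+k}\binom{u+1}{i}\binom{u}{k}\,x^{\Gamma,+}_{(u+1-2i)j+(u-2k)l}$; selecting orbit representatives by restricting to $k\in\{0,\dots,\lfloor(u-1)/2\rfloor\}$ removes the doubling and produces the first summand of \eqref{D+u1}. When $u$ is even, the ``half-orbit'' at $k=u/2$ (where the $l$-component of the index vanishes) is handled separately along the $i$-axis via the involution $i\mapsto u+1-i$, whose representatives $i\in\{0,\dots,u/2\}$ produce precisely the second, parity-dependent summand.

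The proof of \eqref{D-u1} is entirely parallel: the extra sign in $D^{\Gamma,-}_{u,1}(j,l)=-\tfrac12[D^{\Gamma,-}_{u-1,1}(j,l),\Lambda^\Gamma_{j,l,1}]$ cancels the opposite sign in $[x^{\Gamma,-}_n,\Lambda^\Gamma_{j,l,1}]$ coming from the bracket $[h_k^\Gamma,x_l^{\Gamma,-}]$, so the operator acts on $\mathrm{span}\{x^{\Gamma,-}_n\}$ by the same formula (with $+$ replaced by $-$); the base case $D^{\Gamma,-}_{0,1}(j,l)=x^{\Gamma,-}_l$ starts the iteration at index $l$ rather than $j$, which is exactly the swap observed between \eqref{D+u1} and \eqref{D-u1}. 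The main obstacle is the final combinatorial reorganization---identifying $\sigma$, verifying the absence of absolute fixed points, isolating the ``half-orbits'' with $k=u/2$ responsible for the second summand, and applying Pascal's identity cleanly---after which the calculation is mechanical.
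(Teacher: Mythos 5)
Your proof is correct, but it takes a genuinely different route from the paper's. The paper proves \eqref{D+u1} by induction on $u$: it expands $\tfrac12\left[D^{\Gamma,+}_{u,1}(j,l),\Lambda^\Gamma_{j,l,1}\right]$ term by term using \eqref{xk+Lambda_1} and then recombines the resulting eight families of sums through a long sequence of index shifts and applications of Pascal's rule, finally obtaining \eqref{D-u1} from \eqref{D+u1} via the automorphism $\tau$ exchanging $x^{\Gamma,\pm}_j$. You instead observe that the recursion is iteration of a single operator which, lifted to a free module over the index lattice, factors as the product of commuting difference operators $(E_j-E_{-j})(E_l-E_{-l})$; the binomial theorem then yields in one step the symmetric closed form $\sum_{i=0}^u\sum_{k=0}^u(-1)^{i+k}\binom{u}{i}\binom{u}{k}x^{\Gamma,+}_{(u+1-2i)j+(u-2k)l}$, and the remaining work is the purely combinatorial passage to the paper's normal form via the fixed-point-free involution $(i,k)\mapsto(u+1-i,u-k)$ together with Pascal's identity (I checked that your $\tfrac12 S'$ identity and the orbit-representative selection, including the $k=u/2$ half-orbits that produce the parity-dependent second summand, are all correct, e.g.\ against $u=0,1,2$). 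What your approach buys is a conceptual explanation of the formula and the elimination of the inductive bookkeeping; what the paper's buys is that it never leaves the span of the honest basis $\mathcal B$, so no auxiliary free module or descent argument is needed (though, as you note, \eqref{xk+Lambda_1} already holds for all integer indices under the conventions $x^{\Gamma,+}_{-n}=-x^{\Gamma,+}_n$ and $x^{\Gamma,+}_0=0$, so the descent is immediate). Your direct treatment of \eqref{D-u1} by the symmetry of the setup, rather than via $\tau$, is equally valid.
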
 

The proof of \eqref{D+u1} has been omitted to shorten the text due to its straightforwardness.  \eqref{D-u1} follows from \eqref{D+u1} by applying the automorphism $\tau:\cal O\to\cal O$ given by $\tau\left(x^{\Gamma,\pm}_j\right)=x^{\Gamma,\mp}_j$ and $\tau\left(h^{\Gamma}_k\right)=-h^\Gamma_k$ and switching $j$ and $l$. 

\subsection{Straightening Identities}
In this section we state all necessary straightening identities.

Given $u\in{\mathbb Z}_+$ and $j,k,m\in\mathbb N$ define
\begin{equation*}
    D^{\Gamma,\pm}_u(j,k,m)=\sum_{n=0}^u\sum_{v=0}^u(-1)^{n+v}\binom{u}{n}\binom{u}{v}\left(x_{j+(u-2n)k+(u-2v)m}^{\Gamma,\pm}\right).
\end{equation*}

\begin{prop}\label{straightening}
    For all $j,l,k,m\in\mathbb N$ and $n,r,s\in{\mathbb Z}_+$
    \begin{align}
        \Lambda^\Gamma_{j,l,r}\Lambda^\Gamma_{k,m,n}
        &=\Lambda^\Gamma_{k,m,n}\Lambda^\Gamma_{j,l,r}\\
        \left(x_j^{\Gamma,\pm}\right)^{(r)}\left(x_j^{\Gamma,\pm}\right)^{(s)}
        &=\binom{r+s}{s}\left(x_j^{\Gamma,\pm}\right)^{(r+s)}\\
        \left(x_j^{\Gamma,+}\right)^{(r)}\left(x_l^{\Gamma,-}\right)^{(s)}
        &=\sum_{\substack{m,n,q\in{\mathbb Z}_+\\m+n+q\leq\min\{r,s\}}}(-1)^{m+n+q}D^{\Gamma,-}_{m,s-m-n-q}(j,l)\Lambda^\Gamma_{j,l,n}D^{\Gamma,+}_{q,r-m-n-q}(j,l)\\
        \left(x_j^{\Gamma,+}\right)^{(r)}\Lambda^\Gamma_{k,m,n}
        &=\sum_{i=0}^n\Lambda_{k,m,n-i}^\Gamma\prod_{\substack{v_0\ldots v_u\in{\mathbb Z}_+\\\sum v_u=r\\\sum uv_u=i}}\left((u+1)D^{\Gamma,+}_{u}(j,k,m)\right)^{(v_u)}\\
        \Lambda^\Gamma_{k,m,n}\left(x_l^{\Gamma,-}\right)^{(s)}
        &=\sum_{i=0}^n\prod_{\substack{v_0\ldots v_u\in{\mathbb Z}_+\\\sum v_u=s\\\sum uv_u=i}}\left((u+1)D^{\Gamma,-}_{u}(l,k,m)\right)^{(v_u)}\Lambda_{k,m,n-i}^\Gamma
    \end{align}
\end{prop}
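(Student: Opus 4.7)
The plan is to prove the six identities in increasing order of difficulty, following the Garland-type generating-function and inductive techniques from \cite{G,M,BC,C}. I would begin with identity (1): since each $p_s^\Gamma(j,l)=[x_j^{\Gamma,+},D^{\Gamma,-}_{s-1,1}(j,l)]$ lies in the $\mathbb C$-span of $\{h_k^\Gamma:k\in\mathbb Z_+\}$ by Proposition \ref{Du1} together with the bracket $[x_j^{\Gamma,+},x_l^{\Gamma,-}]=h^\Gamma_{j+l}-h^\Gamma_{|j-l|}$, and since all $h_k^\Gamma$'s pairwise commute (as $[h,h]=0$ in $\lie{sl}_2$), all $p_s^\Gamma(j,l)$'s commute. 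Each $\Lambda^\Gamma_{j,l,r}$ is a polynomial in the $p^\Gamma_s(j,l)$'s by the recursion established in the preceding Proposition, so identity (1) follows. Identity (2) is the standard divided-power identity $\frac{a^r}{r!}\cdot\frac{a^s}{s!}=\binom{r+s}{s}\frac{a^{r+s}}{(r+s)!}$, immediate from the definitions.

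For the commutation identities (4) and (5), I would proceed by induction on $n$ using the recursion $\Lambda^\Gamma_{k,m,n}=-\frac{1}{n}\sum_{i=1}^n p_i^\Gamma(k,m)\,\Lambda^\Gamma_{k,m,n-i}$ from the preceding Proposition. The key auxiliary computation is to express $[(x_j^{\Gamma,+})^{(r)},p_s^\Gamma(k,m)]$ as a $\mathbb Z$-linear combination of products involving the $D^{\Gamma,+}_u(j,k,m)$'s, obtained by iterating \eqref{xk+Lambda_1} and Proposition \ref{Du1}. Passing $(x_j^{\Gamma,+})^{(r)}$ through the product of $p$'s and Lambdas then produces the multinomial expression $\prod((u+1)D^{\Gamma,+}_u)^{(v_u)}$ under the constraints $\sum v_u=r$ and $\sum uv_u=i$, which is precisely the expansion of a divided power of a generating series. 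Identity (5) follows from (4) by applying the involution $\tau:\mathcal{O}\to\mathcal{O}$ defined by $\tau(x^{\Gamma,\pm}_j)=x^{\Gamma,\mp}_j$ and $\tau(h^\Gamma_k)=-h^\Gamma_k$ and swapping the roles of $j$ and $l$, analogous to the reduction used to derive \eqref{D-u1} from \eqref{D+u1}.

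The most delicate identity is (3), the Garland-type mixed identity. I would prove it by double induction on $r$ and $s$. For the base case $r=s=1$, the definition gives $[x_j^{\Gamma,+},x_l^{\Gamma,-}]=h^\Gamma_{j+l}-h^\Gamma_{|j-l|}=-\Lambda^\Gamma_{j,l,1}$, and one verifies by direct enumeration that only the index triples $(m,n,q)=(0,0,0)$ and $(0,1,0)$ contribute on the right-hand side, yielding $x_l^{\Gamma,-}x_j^{\Gamma,+}-\Lambda^\Gamma_{j,l,1}$, as required. For the inductive step I would write $(x_j^{\Gamma,+})^{(r)}(x_l^{\Gamma,-})^{(s)}=\frac{1}{r}\,x_j^{\Gamma,+}\cdot(x_j^{\Gamma,+})^{(r-1)}(x_l^{\Gamma,-})^{(s)}$, apply the induction hypothesis, and then commute the leading $x_j^{\Gamma,+}$ past the resulting product using identities (4) and (5) together with Proposition \ref{Duv}. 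The main obstacle will be verifying that the resulting large combinatorial sum collapses to the stated form indexed by triples $(m,n,q)$ with $m+n+q\leq\min\{r,s\}$. I plan to handle this by translating the identity into a formal power series identity in auxiliary indeterminates $u,w,w'$ via the generating-function formulation of Proposition \ref{Duv}; there the recursive definitions of $D^{\Gamma,\pm}_{u,v}$ and $\Lambda^\Gamma_{j,l}(u)$ reduce the verification to matching coefficients in an exponential generating series, which is the standard Garland-type argument.
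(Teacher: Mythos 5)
Your outline follows the same Garland--Chari--Pressley strategy that the paper itself uses. Identities (5) and (6) of Proposition \ref{straightening} (your (1) and (2)) are dispatched exactly as the paper intends, and your plan for (8)--(9) (your (4)--(5)) --- induction via the recursion $\Lambda^\Gamma_{k,m,n}=-\frac1n\sum_i p^\Gamma_i(k,m)\Lambda^\Gamma_{k,m,n-i}$ together with the bracket $[x^{\Gamma,+}_j,p^\Gamma_i(k,m)]=-2D^{\Gamma,+}_i(j,k,m)$ --- is precisely Proposition \ref{bracketxp} followed by the paper's induction (on $r$, with the $r=1$ case handled by an inner induction on $n$). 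For (7) (your (3)) you peel a factor of $x^{\Gamma,+}_j$ off the left and induct on $r$, whereas the paper peels $x^{\Gamma,-}_l$ off the right and inducts on $s$ with the trivial base case $s=0$; these are mirror images of one another and either can be made to work.

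The gap is that every step that actually carries weight is deferred. "Commute the leading $x^{\Gamma,+}_j$ past the resulting product" is not a routine application of (8), (9) and Proposition \ref{Duv}: it requires knowing how a single generator moves through a word of the form $D^{\Gamma,-}_{m,\cdot}(j,l)\,\Lambda^\Gamma_{j,l,n}\,D^{\Gamma,+}_{q,\cdot}(j,l)$, and in the paper this is Lemma \ref{LambdaD+x-}, itself proved by a separate induction resting on a chain of auxiliary identities (Propositions \ref{pnewD}, \ref{bracketpD}, \ref{Du1Lambda}, \ref{LambdaD+} and \ref{uD}), none of which appear in your plan. Similarly, the assertion that passing $(x^{\Gamma,+}_j)^{(r)}$ through the product "produces the multinomial expression" conceals the Chu--Vandermonde collapse that occupies most of the paper's $r=1$ computation for (8). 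Your suggestion to recast the final combinatorial collapse as a formal power-series identity is plausible and, if carried out, would be a genuinely cleaner packaging than the paper's term-by-term computation; but as written it is a statement of intent, and until the analogue of Lemma \ref{LambdaD+x-} (or its generating-function avatar) is actually proved, the induction for (7) does not close.
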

(5) and (6) are clear. Before proving (7)--(9)  we will state and prove a necessary corollary and prove Theorem \ref{thm}.

\begin{cor}
    For all $j,l\in\mathbb N$ and $u,v\in{\mathbb Z}_+$
    \begin{align*}
        (i)_u&\ \ D^{\Gamma,\pm}_{u,v}(j,l)\in U_{\mathbb Z}(\mathcal O);\\
        (ii)_u&\ \ \Lambda^\Gamma_{j,l,u}\in U_{\mathbb Z}(\mathcal O).
    \end{align*}
\end{cor}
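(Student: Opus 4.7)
The plan is to prove $(i)_u$ and $(ii)_u$ simultaneously by strong induction on $u$, using the straightening identity (7) to isolate the new element at each stage.

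The base case $u=0$ is immediate: $\Lambda^\Gamma_{j,l,0}=1$, while $D^{\Gamma,+}_{0,v}(j,l)=(x^{\Gamma,+}_j)^{(v)}$ and $D^{\Gamma,-}_{0,v}(j,l)=(x^{\Gamma,-}_l)^{(v)}$ lie among the generators of $U_{\mathbb Z}(\mathcal O)$.

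For the inductive step, fix $u\geq 1$ and assume $(i)_{u'}$ and $(ii)_{u'}$ hold for all $u'<u$. First, I would extract $\Lambda^\Gamma_{j,l,u}$ by applying (7) with $r=s=u$: the left-hand side $(x^{\Gamma,+}_j)^{(u)}(x^{\Gamma,-}_l)^{(u)}$ lies in $U_{\mathbb Z}(\mathcal O)$, and on the right the summands with $m+n+q=u$ have vanishing second indices $u-m-n-q=0$, so $D^{\Gamma,\pm}_{k,0}=\delta_{k,0}$ forces $m=q=0$ and $n=u$. The unique ``top'' contribution is thus $(-1)^u\Lambda^\Gamma_{j,l,u}$; every other summand has $m,n,q<u$ and is integral by induction. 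Solving yields $(ii)_u$. Next, for $(i)_u$ with the $+$ sign I would apply (7) with $r=u+v$, $s=u$; the same vanishing argument shows the $m+n+q=u$ summands reduce to $(-1)^u\Lambda^\Gamma_{j,l,n}D^{\Gamma,+}_{u-n,v}$ for $n=0,1,\ldots,u$, among which $n=0$ gives $(-1)^u D^{\Gamma,+}_{u,v}$ (the new term), $n=u$ gives $(-1)^u\Lambda^\Gamma_{j,l,u}(x^{\Gamma,+}_j)^{(v)}$ (integral by the freshly-proved $(ii)_u$), and $0<n<u$ gives products of factors both integral by induction; the remaining summands with $m+n+q<u$ have $m,n,q<u$ and are integral. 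Solving for $D^{\Gamma,+}_{u,v}$ gives its integrality. The statement for $D^{\Gamma,-}_{u,v}$ follows symmetrically from (7) with $r=u$, $s=u+v$.

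The conceptual obstacle is that $(i)_u$ must hold for \emph{all} $v$, whereas Proposition \ref{Duv} expresses $D^{\Gamma,+}_{u,v}$ using divided powers of the elements $D^{\Gamma,+}_{k,1}$, which for $k\geq 1$ are non-trivial $\mathbb Z$-linear combinations of generators whose divided powers are not a priori integral. The straightening identity (7) bypasses this problem entirely: instead of proving integrality factor-by-factor, it presents $D^{\Gamma,+}_{u,v}$ as the unknown in an equation whose LHS and other summands are already known to be integral.
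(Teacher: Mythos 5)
Your proposal is correct and follows essentially the same route as the paper: both arguments induct on $u$, apply straightening identity (7) with $r=s=u$ (resp.\ $r=u+v$, $s=u$) so that the vanishing of $D^{\Gamma,\pm}_{k,0}=\delta_{k,0}$ isolates $(-1)^u\Lambda^\Gamma_{j,l,u}$ (resp.\ $(-1)^uD^{\Gamma,+}_{u,v}(j,l)$) as the unique top term, and observe that all other summands are integral by the inductive hypotheses. The only difference is bookkeeping --- the paper runs the interleaved scheme $(ii)^u\Rightarrow(i)^u\Rightarrow(ii)^{u+1}$ while you prove $(ii)_u$ then $(i)_u$ within a single strong-induction step --- which is immaterial.
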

\begin{proof}
    We will prove both statements simultaneously by induction on $u$ according to the scheme
    $$(ii)^u\Rightarrow (i)^u\Rightarrow (ii)^{u+1}$$
    where $(i)^u$ is the statement that $(i)_m$ holds for all $m\leq u$ and similarly for $(ii)^u$. $(i)_0$, $(ii)_0$ and $(ii)_1$ are all clearly true. Assume that $(ii)^u$ and $(i)^{u-1}$ hold for some $u\in{\mathbb Z}_+$. Then, by Proposition \ref{straightening}(7) we have
    \begin{align*}
        \left(x_j^{\Gamma,+}\right)^{(u+v)}\left(x_l^{\Gamma,-}\right)^{(u)}
        &=\sum_{\substack{m,n,q\in{\mathbb Z}_+\\m+n+q\leq u}}(-1)^{m+n+q}D^{\Gamma,-}_{m,u-m-n-q}(j,l)\Lambda^\Gamma_{j,l,n}D^{\Gamma,+}_{q,u+v-m-n-q}(j,l)\\
        &=(-1)^{u}D^{\Gamma,+}_{u,v}(j,l)+\sum_{\substack{m,n,q\in{\mathbb Z}_+\\m+n+q\leq u\\m,q<u}}(-1)^{m+n+q}D^{\Gamma,-}_{m,u-m-n-q}(j,l)\Lambda^\Gamma_{j,l,n}D^{\Gamma,+}_{q,u+v-m-n-q}(j,l)
    \end{align*}
    The summation is in $U_{\mathbb Z}(\mathcal O)$ by the induction hypothesis and $(ii)^u$. Therefore, $(i)^u$ holds. 
    
    Again, by Proposition \ref{straightening}(7), we have
    \begin{align*}
        \left(x_j^{\Gamma,+}\right)^{(u+1)}\left(x_l^{\Gamma,-}\right)^{(u+1)}
        &=\sum_{\substack{m,n,q\in{\mathbb Z}_+\\m+n+q\leq u+1}}(-1)^{m+n+q}D^{\Gamma,-}_{m,u+1-m-n-q}(j,l)\Lambda^\Gamma_{j,l,n}D^{\Gamma,+}_{q,u+1-m-n-q}(j,l)\\
        &=(-1)^{u+1}\Lambda^\Gamma_{j,l,u+1}\\
        &+\sum_{\substack{m,n,q\in{\mathbb Z}_+\\m+n+q\leq u+1\\n<u+1}}(-1)^{m+n+q}D^{\Gamma,-}_{m,u+1-m-n-q}(j,l)\Lambda^\Gamma_{j,l,n}D^{\Gamma,+}_{q,u+1-m-n-q}(j,l)\\
    \end{align*}
    The summation is in $U_{\mathbb Z}(\mathcal O)$ by the induction hypothesis and $(i)^u$. Therefore, $(ii)^{u+1}$ holds.
\end{proof}

\section{Proof of the main result }\label{proof}

\subsection{Proof of Theorem \ref{thm}}

In this section we will prove Theorem \ref{thm}. The proof will proceed by induction on the degree of monomials in $U_{\mathbb Z}(\mathcal O)$ and the following lemmas and proposition.

\begin{lem}\label{brackets}
    For all $r,s,n\in{\mathbb Z}_+$ and $j,k,l,m\in\mathbb N$
    \begin{enumerate}
        \item $\left[\left(x_j^{\Gamma,+}\right)^{(r)},\left(x_l^{\Gamma,-}\right)^{(s)}\right]$ is in the $\mathbb Z$-span of $\mathcal B$ and has degree less that $r+s$.
        
        \item $\left[\left(x_j^{\Gamma,+}\right)^{(r)},\Lambda^\Gamma_{k,m,n}\right]$ is in the $\mathbb Z$-span of $\mathcal B$ and has degree less that $r+n$.
        
        \item $\left[\Lambda^\Gamma_{k,m,n},\left(x_l^{\Gamma,-}\right)^{(s)}\right]$ is in the $\mathbb Z$-span of $\mathcal B$ and has degree less that $s+n$.
    \end{enumerate}
\end{lem}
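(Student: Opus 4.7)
My plan is to apply the three straightening identities of Proposition \ref{straightening}, in each case isolate the ``already ordered'' term from the sum, and verify a degree inequality for every remaining summand. Integrality is automatic from the preceding Corollary. Before starting, I would record two degree estimates. First, $\deg D^{\Gamma,\pm}_{u,v}(j,l)\le v$: by Proposition \ref{Duv}, $D^{\Gamma,\pm}_{u,v}$ is a $\mathbb Z$-combination of products of divided powers $(D^{\Gamma,\pm}_{u',1}(j,l))^{(k_{u'})}$ with $\sum k_{u'}=v$, and by Proposition \ref{Du1} each $D^{\Gamma,\pm}_{u',1}(j,l)\in\mathcal O$ has degree one. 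Second, $\deg\Lambda^\Gamma_{j,l,n}\le n$: this follows by induction on $n$ from the recursion $\Lambda^\Gamma_{j,l,n}=-\tfrac1n\sum_{i=1}^n p_i^\Gamma(j,l)\Lambda^\Gamma_{j,l,n-i}$, using that $p_i^\Gamma(j,l)=[x_j^{\Gamma,+},D^{\Gamma,-}_{i-1,1}(j,l)]$ is a bracket of two elements of $\mathcal O$ and therefore again lies in $\mathcal O$.

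For part (1), Proposition \ref{straightening}(7) gives
$$\left(x_j^{\Gamma,+}\right)^{(r)}\left(x_l^{\Gamma,-}\right)^{(s)}=\sum_{\substack{m,n,q\in{\mathbb Z}_+\\m+n+q\leq\min\{r,s\}}}(-1)^{m+n+q}D^{\Gamma,-}_{m,s-m-n-q}(j,l)\,\Lambda^\Gamma_{j,l,n}\,D^{\Gamma,+}_{q,r-m-n-q}(j,l).$$
The $(m,n,q)=(0,0,0)$ summand collapses, via $D^{\Gamma,+}_{0,r}(j,l)=(x_j^{\Gamma,+})^{(r)}$, $D^{\Gamma,-}_{0,s}(j,l)=(x_l^{\Gamma,-})^{(s)}$, and $\Lambda^\Gamma_{j,l,0}=1$, to the reordered product $(x_l^{\Gamma,-})^{(s)}(x_j^{\Gamma,+})^{(r)}$. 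Subtracting, the commutator $[(x_j^{\Gamma,+})^{(r)},(x_l^{\Gamma,-})^{(s)}]$ equals the sum over $(m,n,q)\neq(0,0,0)$, and each such summand has degree at most $(s-m-n-q)+n+(r-m-n-q)=r+s-2m-n-2q<r+s$.

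Parts (2) and (3) follow the same pattern applied to Proposition \ref{straightening}(8) and (9). In (8), the $i=0$ contribution to $(x_j^{\Gamma,+})^{(r)}\Lambda^\Gamma_{k,m,n}$ admits only the multi-index $v_0=r$, $v_u=0$ for $u\ge1$ (forced by $\sum uv_u=0$ together with $\sum v_u=r$), and since direct evaluation gives $D^{\Gamma,+}_0(j,k,m)=x_j^{\Gamma,+}$, this term simplifies to $\Lambda^\Gamma_{k,m,n}(x_j^{\Gamma,+})^{(r)}$; subtracting yields $[(x_j^{\Gamma,+})^{(r)},\Lambda^\Gamma_{k,m,n}]$ as the sum over $i\ge 1$. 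Each such summand has degree at most $(n-i)+r\le r+n-1$, since every $D^{\Gamma,+}_u(j,k,m)\in\mathcal O$ is a Lie-algebra element and the divided-power product contributes total degree $\sum v_u=r$. Part (3) is the mirror argument via Proposition \ref{straightening}(9), with the $i=0$ term collapsing to $(x_l^{\Gamma,-})^{(s)}\Lambda^\Gamma_{k,m,n}$. The main obstacle in the whole proof is really just the bookkeeping of degrees, especially the inductive bound on $\Lambda^\Gamma_{j,l,n}$; once that is in place, each claim reduces to inspection of a single straightening formula.
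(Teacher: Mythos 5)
Your proof is correct and follows essentially the same route as the paper: in each case you apply the corresponding straightening identity from Proposition \ref{straightening}, peel off the $(m,n,q)=(0,0,0)$ (resp.\ $i=0$) term as the reordered product, and bound the degree of the remaining summands exactly as the paper does (via $\deg D^{\Gamma,\pm}_{u,v}\le v$, $\deg\Lambda^\Gamma_{j,l,n}\le n$, and Proposition \ref{Du1}). The only difference is that you state these two degree estimates explicitly up front, which the paper leaves implicit.
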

\begin{proof}
    For (1), by Proposition \ref{straightening}(7) we have 
    \begin{equation*}
        \left[\left(x_j^{\Gamma,+}\right)^{(r)},\left(x_l^{\Gamma,-}\right)^{(s)}\right]
        =\sum_{\substack{m,n,q\in{\mathbb Z}_+\\0<m+n+q\leq\min\{r,s\}}}(-1)^{m+n+q}D^{\Gamma,-}_{m,s-m-n-q}(j,l)\Lambda^\Gamma_{j,l,n}D^{\Gamma,+}_{q,r-m-n-q}(j,l).
    \end{equation*}
    By Proposition \ref{Du1} and the definition of $D^{\Gamma,\pm}_{u,v}(j,l)$ we see that the terms of the summation are in the $\mathbb Z$-span of $\mathcal B$ and have degree $r+s-2m-n-2q<r+s$. For (2) and (3) we have, by Proposition \ref{straightening}(8) and (9),
    \begin{align*}
        \left[\left(x_j^{\Gamma,+}\right)^{(r)},\Lambda^\Gamma_{k,m,n}\right]
        &=\sum_{i=1}^n\Lambda_{k,m,n-i}^\Gamma\prod_{\substack{v_0\ldots v_u\in{\mathbb Z}_+\\\sum v_u=r\\\sum uv_u=i}}\left((u+1)D^{\Gamma,+}_{u}(j,k,m)\right)^{(v_u)},\\
        \left[\Lambda^\Gamma_{k,m,n},\left(x_l^{\Gamma,-}\right)^{(s)}\right]
        &=\sum_{i=1}^n\prod_{\substack{v_0\ldots v_u\in{\mathbb Z}_+\\\sum v_u=s\\\sum uv_u=i}}\left((u+1)D^{\Gamma,-}_{u}(l,k,m)\right)^{(v_u)}\Lambda_{k,m,n-i}^\Gamma.
    \end{align*}
    In either case the right-hand side is in the ${\mathbb Z}$-span of $\mathcal B$ and has degree $n-i+r<n+r$ or $n-i+s<n+s$ respectively by the definition of $D^{\Gamma,\pm}_{u}(j,k,m)$.
\end{proof}

The following proposition gives the $p^{\Gamma}_u(j,l)$ as ${\mathbb Z}$-linear combinations of the $h^{\Gamma}_{n}$.

\begin{prop}\label{p_u}
    Given $u\in\mathbb N$
    \begin{align*}
        p^{\Gamma}_u(j,l)
        &=\sum_{k=0}^{\left\lfloor\frac{u-1}{2}\right\rfloor}\sum_{i=0}^{u}(-1)^{k+i}\binom{u}{k}\binom{u}{i}\left(h_{(u-2i)j+(u-2k)l}^{\Gamma}\right)\nonumber\\
        &+((u+1)\mod2)\sum_{i=0}^{u}(-1)^{\frac{u}{2}+i}\binom{u-1}{\frac{u-2}{2}}\binom{u}{i}\left(h_{(u-2i)j}^{\Gamma}\right)
    \end{align*}\hfill\qedsymbol
\end{prop}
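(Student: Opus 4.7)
The plan is to compute $p^{\Gamma}_u(j,l) = \bigl[x_j^{\Gamma,+}, D^{\Gamma,-}_{u-1,1}(j,l)\bigr]$ directly by substituting the explicit expansion of $D^{\Gamma,-}_{u-1,1}(j,l)$ provided by Proposition \ref{Du1} (with $u$ there replaced by $u-1$) and then bracketing term by term. The key tool is the commutation relation $[x_j^{\Gamma,+}, x_n^{\Gamma,-}] = h^\Gamma_{j+n} - h^\Gamma_{j-n}$, which extends bilinearly to all integer subscripts $n$ in a manner consistent with the identifications $x^{\Gamma,-}_{-n} = -x^{\Gamma,-}_n$, $x^{\Gamma,-}_0 = 0$, and $h^\Gamma_{-n} = h^\Gamma_n$; in particular, negative subscripts may be carried through the calculation and folded back via $h^\Gamma$-symmetry at the end.

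In more detail, the main double sum of $D^{\Gamma,-}_{u-1,1}(j,l)$ is indexed by $0 \leq k \leq \lfloor(u-2)/2\rfloor$ and $0 \leq i \leq u$ with coefficient $(-1)^{k+i}\binom{u-1}{k}\binom{u}{i}$ on $x^{\Gamma,-}_{(u-2i)l+(u-1-2k)j}$. Bracketing with $x_j^{\Gamma,+}$ and using $h^\Gamma_{-n}=h^\Gamma_n$ transforms each such term into the difference $h^\Gamma_{(u-2k)j+(u-2i)l} - h^\Gamma_{(u-2-2k)j+(u-2i)l}$. Shifting $k\mapsto k-1$ in the subtracted copy and applying Pascal's identity $\binom{u-1}{k}+\binom{u-1}{k-1}=\binom{u}{k}$ merges the two copies into a single double sum in $h^\Gamma_{(u-2k)j+(u-2i)l}$ with coefficient $(-1)^{k+i}\binom{u}{k}\binom{u}{i}$, ranging over $0 \leq k \leq \lfloor u/2 \rfloor$ and $0 \leq i \leq u$, together with boundary residues at the extreme values of $k$ where only one of the two halves contributes and the coefficient degenerates from $\binom{u}{k}$ to $\binom{u-1}{k-1}$.

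Next, I would incorporate the single-sum part of $D^{\Gamma,-}_{u-1,1}(j,l)$, which is present precisely when $u$ is odd: its bracket with $x_j^{\Gamma,+}$ yields $h^\Gamma$-terms with $j$-coefficient $\pm 1$ that merge with the boundary residues to complete the Pascal pattern at $k = (u-1)/2$, after which the output collapses exactly onto the claim's first double sum; the indicator $((u+1)\bmod 2)$ vanishes, as required. When $u$ is even, the single-sum contribution is absent and the boundary residue at $k = u/2$ survives; there $(u-2k)j = 0$, so after invoking $h^\Gamma_{-n}=h^\Gamma_n$ and relabeling indices so that the $l$-coefficient runs over positive values, the surviving residue resurfaces as the claim's second single sum with binomial $\binom{u-1}{(u-2)/2}\binom{u}{i}$ and sign $(-1)^{u/2+i}$.

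The main obstacle is the final reorganisation: the Pascal-merged double sum naturally groups the summands by $j$-coefficient (controlled by the running index $k$), whereas the claim's decomposition groups them by $l$-coefficient, collecting into the second single sum precisely those $h^\Gamma$-terms whose $l$-coefficient vanishes. Reconciling the two forms requires a term-by-term reshuffle using $h^\Gamma_{-n}=h^\Gamma_n$ to move negative $l$-coefficient contributions into the positive range and to verify, coefficient by coefficient, that the total resolves into the exact shape stated. This is careful binomial and parity bookkeeping but presents no conceptual obstruction; the argument mirrors, and indeed leans on, the analogous (omitted) proof of Proposition \ref{Du1}, with the bracket of $x^{\Gamma,+}_j$ against an $x^{\Gamma,-}$-term playing the role that $[x^{\Gamma,\pm}_k,\Lambda^\Gamma_{j,l,1}]$ played there.
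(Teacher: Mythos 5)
Your proposal follows essentially the same route as the paper's (omitted) proof: expand $D^{\Gamma,-}_{u-1,1}(j,l)$ via Proposition \ref{Du1}\eqref{D-u1}, bracket term by term with $x_j^{\Gamma,+}$ using $\left[x_j^{\Gamma,+},x_n^{\Gamma,-}\right]=h^\Gamma_{j+n}-h^\Gamma_{j-n}$, and then merge via Pascal's rule, the symmetry $h^\Gamma_{-n}=h^\Gamma_n$, and a parity case split. You correctly identify the two genuine subtleties (the absorption of the single-sum part of $D^{\Gamma,-}_{u-1,1}$ for odd $u$, and the final reindexing that swaps which index is attached to $j$ versus $l$), so the plan is sound.
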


The proof is omitted due to straightforwardness.

\begin{lem}\label{LambdaLambda}
    For all $j,l\in\mathbb N$ and $k,m\in\mathbb N$
    \begin{equation*}
        \Lambda_{j,l,k}^\Gamma\Lambda_{j,l,m}^\Gamma=\binom{k+m}{k}\Lambda_{j,l,k+m}^\Gamma+u
    \end{equation*}
    where $u$ is in the ${\mathbb Z}$-span of the set $\left\{\Lambda_{i_1,n_1,q_1}^\Gamma\dots\Lambda_{i_s,n_s,q_s}^\Gamma\ \Big|\ i_1,\ldots,i_s,n_1\ldots,n_s\in\mathbb N,\ q_1,\ldots,q_s\in{\mathbb Z}_+\right\}$ and has degree less than $k+m$.
\end{lem}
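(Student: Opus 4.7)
The plan is to extract the top filtered-degree component of each $\Lambda^\Gamma_{j,l,k}$ and read off the lemma from a clean leading-term comparison. The crucial preliminary is that, by Proposition~\ref{p_u}, every $p_s^\Gamma(j,l)$ lies in $\mathrm{span}\{h^\Gamma_n : n \in \mathbb Z_+\}$, which is abelian since $[h_n^\Gamma, h_{n'}^\Gamma] = 0$. Hence the $p_s^\Gamma(j,l)$ pairwise commute inside $U(\mathcal O)$, so the defining exponential generating series can be expanded in a genuinely commutative polynomial ring:
\[
\Lambda^\Gamma_{j,l,k} \;=\; \sum_{\substack{n_1, n_2, \ldots \in \mathbb Z_+ \\ \sum_s s n_s = k}} \,\prod_s \frac{\bigl(-p_s^\Gamma(j,l)/s\bigr)^{n_s}}{n_s!}.
\]

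Each $p_s^\Gamma(j,l)\in\mathcal O$ sits in $U_1(\mathcal O)$, so a monomial $\prod_s (p_s^\Gamma(j,l))^{n_s}$ has $U$-degree $\sum_s n_s$. Under the constraint $\sum_s s\,n_s = k$ this sum is bounded above by $k$, with equality if and only if $n_1 = k$ and $n_s = 0$ for all $s \geq 2$. Hence the degree-$k$ part of $\Lambda^\Gamma_{j,l,k}$ equals $(-p_1^\Gamma(j,l))^k/k!$ and every other summand has degree at most $k-1$. Multiplying and again using commutativity,
\[
\Lambda^\Gamma_{j,l,k}\,\Lambda^\Gamma_{j,l,m} \;=\; \frac{(-p_1^\Gamma(j,l))^{k+m}}{k!\,m!} + \bigl(\text{terms of } U\text{-degree} < k+m\bigr),
\]
whereas
\[
\binom{k+m}{k}\Lambda^\Gamma_{j,l,k+m} \;=\; \binom{k+m}{k}\,\frac{(-p_1^\Gamma(j,l))^{k+m}}{(k+m)!} + \bigl(\text{terms of } U\text{-degree} < k+m\bigr).
\]
The identity $\frac{1}{k!\,m!} = \binom{k+m}{k}/(k+m)!$ makes these two leading terms coincide, so
\[
u \;:=\; \Lambda^\Gamma_{j,l,k}\,\Lambda^\Gamma_{j,l,m} - \binom{k+m}{k}\Lambda^\Gamma_{j,l,k+m}
\]
has $U$-degree strictly less than $k+m$. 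Finally, by its very construction $u$ is already an integer linear combination of the two elements $\Lambda^\Gamma_{j,l,k}\,\Lambda^\Gamma_{j,l,m}$ and $\Lambda^\Gamma_{j,l,k+m}$ of the displayed generating set (one has $s = 2$, the other $s = 1$), so $u$ plainly lies in its $\mathbb Z$-span.

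I anticipate no serious obstacle beyond the leading-term extraction. That step reduces to the partition-counting observation that $(1,1,\ldots,1)$ is the unique partition of $k$ attaining $k$ parts, together with the abelianness of the Cartan part provided by Proposition~\ref{p_u}; once both are noted, the displayed cancellation and the membership of $u$ in the $\mathbb Z$-span follow directly.
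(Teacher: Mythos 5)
Your leading-symbol argument for the degree bound is correct and is a genuinely different (and cleaner) route than the paper takes for that part: since by Proposition~\ref{p_u} the elements $p_s^\Gamma(j,l)$ all lie in the commutative span of the $h_n^\Gamma$, the partition expansion of the exponential is legitimate, the top filtered piece of $\Lambda^\Gamma_{j,l,k}$ is $(-p_1^\Gamma(j,l))^k/k!$, and the two leading symbols cancel exactly, giving $\deg u<k+m$. The problem is your last step. You verify that $u$ lies in the $\mathbb Z$-span of the displayed set by observing that $u$ is, by definition, $\Lambda^\Gamma_{j,l,k}\Lambda^\Gamma_{j,l,m}-\binom{k+m}{k}\Lambda^\Gamma_{j,l,k+m}$, an integer combination of two members of that set. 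That is circular: it renders the span condition vacuous, and it exhibits $u$ as a combination of $\Lambda$-monomials of degree $k+m$ rather than of degree less than $k+m$. The lemma is invoked in the degree induction proving Theorem~\ref{thm}: after consolidating a repeated pair $\Lambda^\Gamma_{j,l,k}\Lambda^\Gamma_{j,l,m}$ one must apply the induction hypothesis to the monomials occurring in $u$, and for that one needs $u$ written as an integer linear combination of products $\Lambda^\Gamma_{i_1,n_1,q_1}\cdots\Lambda^\Gamma_{i_s,n_s,q_s}$ \emph{each} of degree strictly below $k+m$. With your witness for the span membership, substituting $u$ back simply reproduces the monomial one started from, and the induction does not close.

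That existence statement is precisely what the paper's proof is devoted to and what your proposal omits: the paper transports the problem to the polynomial ring $\mathbb C[X_1,X_2,\ldots]$ via $p_i^\Gamma(j,l)\mapsto -X_i$, invokes Lemma 9.2 of \cite{G} to control the lower-order correction there with integer coefficients, and then---the genuinely new computation---proves the identities \eqref{p2n+1} and \eqref{p2n}, which express each power sum $p_i^\Gamma(j,l)$ as an integer linear combination of degree-one elements $\Lambda^\Gamma_{\cdot,\cdot,1}$, so that the correction becomes an integer combination of $\Lambda$-monomials of the correct lower degree. Your leading-term computation happens over $\mathbb Q$ and says nothing about this integrality. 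The argument would need to be supplemented by that step (or an equivalent one) before it proves what the lemma is actually required to deliver.
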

\begin{proof}
    $\Lambda_{j,l,k}^\Gamma$ corresponds to $\Lambda_{k-1}$ from \cite{G} under the map $\tau:U\left((\lie g^0\otimes\mathbb C [t,t^{-1}])^\Gamma\right)\to\mathbb C[X_1,X_2,\ldots]$ given by $\tau(p_i^\Gamma(j,l))=-X_i$. Therefore, by Lemma 9.2 in \cite{G}, it suffices to show that $p_i^\Gamma(j,l)$ is an integer linear combination of $\Lambda^\Gamma_{i,n,q}$. To that end we have, for all $n\in{\mathbb Z}_+$
    \begin{align}
        p_{2n+1}^\Gamma(j,l)
        &=\sum_{i=0}^{n}\sum_{k=0}^{n}(-1)^{k+i+1}\binom{2n+1}{i}\binom{2n+1}{k}\Lambda^\Gamma_{(2n+1-2i)j,(2n+1-2k)l,1}.\label{p2n+1}\\
        p^\Gamma_{2n}(j,l)
        &=\sum_{i=0}^{n-1}\sum_{k=0}^{2n}(-1)^{k+i+1}\binom{2n-1}{i}\binom{2n}{k}\Lambda^\Gamma_{(2n-1-2i)j+(2n-2k)l,j,1}.\label{p2n}
    \end{align}
    The proofs of \eqref{p2n+1} and \eqref{p2n} are straightforward and have been omitted to shorten the text.
\end{proof}

We can now prove Theorem \ref{thm}.
\begin{proof}
    Let $B$ be the set of ordered monomials constructed from $M$. The PBW Theorem implies that $B$ is a $\mathbb C$-linearly independent set. Thus it is a ${\mathbb Z}$-linearly independent set. 
    
    The proof that the ${\mathbb Z}$-span of $B$ is $U_{\mathbb Z}(\cal O)$ will proceed by induction on the degree of monomials in $U_{\mathbb Z}(\cal O)$. Since $\Lambda^\Gamma_{j,l,1}=-\left(h^\Gamma_{j+l}-h^\Gamma_{j,l}\right)$, any degree one monomial is in the ${\mathbb Z}$-span of $B$. Now take any monomial, $m$, in $U_{\mathbb Z}(\cal O)$. If $m\in B$, then we are done. If not then either the factors of $m$ are not in the correct order or $m$ has ``repeated'' factors with the following forms
    \begin{equation}\label{repeated}
        \left(x_j^{\Gamma,\pm}\right)^{(r)}\text{ and }\left(x_j^{\Gamma,\pm}\right)^{(s)}\text{ or }\Lambda^\Gamma_{j,l,r}\text{ and }\Lambda^\Gamma_{j,l,s},\ j,l,r,s\in\mathbb N.
    \end{equation}
    If the factors of $m$ are not in the correct order, then we can rearrange the factors of $m$ using the straightening identities in Proposition \ref{straightening}. Once this is done Lemma \ref{brackets} guarantees that each rearrangement will only produce ${\mathbb Z}$-linear combinations of monomials in the correct order with lower degree. These lower degree monomials are then in the ${\mathbb Z}$-span of $B$ by the induction hypothesis.
    
    If (possibly after rearranging factors as above) $m$ contains the products of the pairs of factors in \eqref{repeated} we apply Proposition \ref{straightening}(6) or Lemma \ref{LambdaLambda} respectively to consolidate these pairs of factors into single factors with integral coefficients.
    
    In the end we see that $m\in{\mathbb Z}$-span $B$. Thus the ${\mathbb Z}$-span of $B$ is $U_{\mathbb Z}(\cal O)$ and hence $B$ is an integral basis for $U_{\mathbb Z}(\cal O)$.
\end{proof}

All that remains is to prove Proposition \ref{straightening}(7)-(9).

\subsection{Proof of Proposition \ref{straightening} (8) and (9).} We first need the following propositions.

\begin{prop}\label{pnewD}
    For all $k,u\in{\mathbb Z}_+$
    \begin{equation*}
        p_{k+u+1}^\Gamma(j,l)=\left[D^{\Gamma,+}_{u,1}(j,l),D^{\Gamma,-}_{k,1}(j,l)\right]
    \end{equation*}
\end{prop}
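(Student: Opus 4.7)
The plan is to induct on $u\in{\mathbb Z}_+$, proving the identity simultaneously for all $k\in{\mathbb Z}_+$. The base case $u=0$ is immediate: by the definition of $D^{\Gamma,+}_{0,1}(j,l)=x_j^{\Gamma,+}$ and the definition of $p_{k+1}^\Gamma(j,l)$ as $\bigl[x_j^{\Gamma,+},D^{\Gamma,-}_{k,1}(j,l)\bigr]$, we get exactly $\bigl[D^{\Gamma,+}_{0,1}(j,l),D^{\Gamma,-}_{k,1}(j,l)\bigr]=p_{k+1}^\Gamma(j,l)$.

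For the inductive step, I would expand $D^{\Gamma,+}_{u,1}(j,l)=\tfrac{1}{2}\bigl[D^{\Gamma,+}_{u-1,1}(j,l),\Lambda^\Gamma_{j,l,1}\bigr]$ inside the outer bracket and apply the Jacobi identity in the form $[[A,B],C]=[A,[B,C]]-[B,[A,C]]$ with $A=D^{\Gamma,+}_{u-1,1}(j,l)$, $B=\Lambda^\Gamma_{j,l,1}$, $C=D^{\Gamma,-}_{k,1}(j,l)$. The inner bracket $\bigl[\Lambda^\Gamma_{j,l,1},D^{\Gamma,-}_{k,1}(j,l)\bigr]$ collapses to $2D^{\Gamma,-}_{k+1,1}(j,l)$ by the defining recursion for $D^{\Gamma,-}_{k+1,1}(j,l)$, so the first Jacobi term becomes $2\bigl[D^{\Gamma,+}_{u-1,1}(j,l),D^{\Gamma,-}_{k+1,1}(j,l)\bigr]$. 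Applying the inductive hypothesis with parameters $(u-1,k+1)$ identifies this with $2p_{k+u+1}^\Gamma(j,l)$. Dividing by $2$ yields the desired formula once the second Jacobi term is shown to vanish.

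The key observation — and the only step that requires a little justification — is that $\bigl[\Lambda^\Gamma_{j,l,1},\bigl[D^{\Gamma,+}_{u-1,1}(j,l),D^{\Gamma,-}_{k,1}(j,l)\bigr]\bigr]=0$. By the inductive hypothesis the inner bracket equals $p_{k+u}^\Gamma(j,l)$, and Proposition \ref{p_u} expresses every $p_m^\Gamma(j,l)$ as a $\mathbb Z$-linear combination of the Cartan-type elements $h_n^\Gamma$. Since $\Lambda^\Gamma_{j,l,1}=-\bigl(h^\Gamma_{j+l}-h^\Gamma_{j-l}\bigr)$ also lies in $\mathrm{span}\{h_n^\Gamma\mid n\in{\mathbb Z}_+\}$, and this span is abelian inside $U(\mathcal O)$ (it corresponds to the $\Gamma$-fixed part of the Cartan loop, where $[h,h]=0$), the bracket is zero.

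The main potential obstacle is that Proposition \ref{p_u} is invoked before it is proved in the text, so strictly speaking one should either include its short, direct verification here (expand $D^{\Gamma,-}_{k-1,1}(j,l)$ via Proposition \ref{Du1}, bracket against $x_j^{\Gamma,+}$ using the relation $\bigl[x_j^{\Gamma,+},x_n^{\Gamma,-}\bigr]=h^\Gamma_{j+n}-h^\Gamma_{|j-n|}$, and collect terms) or simply note that only the statement that $p_m^\Gamma(j,l)\in\mathrm{span}\{h_n^\Gamma\}$ is used, which is transparent from the same expansion without needing the explicit combinatorial coefficients. Either way, the induction closes and the proposition follows.
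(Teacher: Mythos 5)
Your proof is correct and follows essentially the same route as the paper's: induction on $u$, expanding $D^{\Gamma,+}_{u,1}(j,l)$ via its defining recursion, applying the Jacobi identity so that one term becomes $\bigl[D^{\Gamma,+}_{u-1,1}(j,l),D^{\Gamma,-}_{k+1,1}(j,l)\bigr]$ (handled by the induction hypothesis) and the other is a bracket of $\Lambda^\Gamma_{j,l,1}$ against $p^\Gamma_{k+u}(j,l)$. The only difference is that you explicitly justify why that second term vanishes (both factors lie in the abelian span of the $h^\Gamma_n$), a step the paper's computation drops silently, and your remark that only $p_m^\Gamma(j,l)\in\mathrm{span}\{h_n^\Gamma\}$ is needed—rather than the full force of Proposition \ref{p_u}—correctly avoids any ordering issue.
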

\begin{proof}
    To prove the claim we proceed by induction on $u$. If $u=0$ the claim is true by definition. Now assume the claim for some $u\in{\mathbb Z}_+$. Then we have
    \begin{align*}
        \left[D^{\Gamma,+}_{u+1,1}(j,l),D^{\Gamma,-}_{k,1}(j,l)\right]
        &=\frac{1}{2}\left[\left[D^{\Gamma,+}_{u,1}(j,l),\Lambda_{j,l,1}^\Gamma\right],D^{\Gamma,-}_{k,1}(j,l)\right]\\
        &=-\frac{1}{2}\left[D^{\Gamma,-}_{k,1}(j,l),\left[D^{\Gamma,+}_{u,1}(j,l),\Lambda_{j,l,1}^\Gamma\right]\right]\\
        &=\frac{1}{2}\left[D^{\Gamma,+}_{u,1}(j,l),\left[\Lambda_{j,l,1}^\Gamma,D^{\Gamma,-}_{k,1}(j,l)\right]\right]+\frac{1}{2}\left[\Lambda_{j,l,1}^\Gamma,\left[D^{\Gamma,-}_{k,1}(j,l),D^{\Gamma,+}_{u,1}(j,l)\right]\right]\\
        &\textnormal{ by the Jacobi Identitiy}\\
        &=\left[D^{\Gamma,+}_{u,1}(j,l),D^{\Gamma,-}_{k+1,1}(j,l)\right]-\frac{1}{2}\left[\Lambda_{j,l,1}^\Gamma,p_{k+u+1}^\Gamma(j,l)\right]\textnormal{ by the induction hypothesis}\\
        &=p_{k+u+1}^\Gamma(j,l)\textnormal{ by the induction hypothesis again}
    \end{align*}
\end{proof}

We now give the proof of Proposition \ref{straightening} (8). The proof of $(9)$ is similar.

\begin{proof}
We proceed by induction on $r$. The case $r=0$ is clear. In the case $r=1$ we need to show
   
    \begin{equation}\label{xj+Lambdan}
        \left(x_j^{\Gamma,+}\right)\Lambda^\Gamma_{k,m,n}=\sum_{i=0}^n\sum_{r=0}^{i}\sum_{s=0}^i(-1)^{r+s}(i+1)\binom{i}{r}\binom{i}{s}\Lambda^\Gamma_{k,m,n-i}\left(x_{j+(i-2r)k+(i-2s)m}^{\Gamma,+}\right).
    \end{equation}
    We will prove this by induction on $n$. The case $n=1$ is a straightforward calculation. Assume for some $n\in\mathbb N$ that \eqref{xj+Lambdan} holds for all $1\leq q\leq n$. Then we have
    \begin{align*}
        \left(x_j^{\Gamma,+}\right)\Lambda^\Gamma_{k,m,n}
        &=-\frac{1}{n}\sum_{i=1}^n\left(x_j^{\Gamma,+}\right)p^\Gamma_i(k,m)\Lambda^\Gamma_{k,m,n-i}\\
        &=-\frac{1}{n}\sum_{i=1}^np^\Gamma_i(k,m)\left(x_j^{\Gamma,+}\right)\Lambda^\Gamma_{k,m,n-i}\\
        &+\frac{2}{n}\sum_{i=1}^n\sum_{r=0}^i\sum_{s=0}^i(-1)^{r+s}\binom{i}{r}\binom{i}{s}\left(x_{j+(i-2r)k+(i-2s)m}^{\Gamma,+}\right)\Lambda^\Gamma_{k,m,n-i}\textnormal{ by Proposition }\ref{bracketxp}\\
        &=-\frac{1}{n}\sum_{i=1}^n\sum_{u=0}^{n-i}\sum_{r=0}^{u}\sum_{s=0}^u(-1)^{r+s}(u+1)\binom{u}{r}\binom{u}{s}p^\Gamma_i(k,m)\Lambda^\Gamma_{k,m,n-i-u}\left(x_{j+(u-2r)k+(u-2s)m}^{\Gamma,+}\right)\\
        &+\frac{2}{n}\sum_{i=1}^n\sum_{r=0}^i\sum_{s=0}^i\sum_{u=0}^{n-i}\sum_{v=0}^{u}\sum_{w=0}^u(u+1)(-1)^{r+s}(-1)^{v+w}\binom{i}{r}\binom{i}{s}\binom{u}{v}\binom{u}{w}\Lambda^\Gamma_{k,m,n-i-u}\\
        &\times\left(x_{j+(u+i-2(v+r))k+(u+i-2(w+s))m}^{\Gamma,+}\right)\textnormal{ by the induction hypothesis}\\
        &=-\frac{1}{n}\sum_{u=0}^{n-1}\sum_{r=0}^{u}\sum_{s=0}^u(-1)^{r+s}(u+1)\binom{u}{r}\binom{u}{s}\sum_{i=1}^{n-u}p^\Gamma_i(k,m)\Lambda^\Gamma_{k,m,n-i-u}\left(x_{j+(u-2r)k+(u-2s)m}^{\Gamma,+}\right)\\
        &+\frac{2}{n}\sum_{i=1}^n\sum_{r=0}^i\sum_{s=0}^i\sum_{u=i}^{n}\sum_{v=0}^{u-i}\sum_{w=0}^{u-i}(u-i+1)(-1)^{r+s}(-1)^{v+w}\binom{i}{r}\binom{i}{s}\binom{u-i}{v}\binom{u-i}{w}\Lambda^\Gamma_{k,m,n-u}\\
        &\times\left(x_{j+(u-2(v+r))k+(u-2(w+s))m}^{\Gamma,+}\right)\\
        &=\frac{1}{n}\sum_{u=0}^{n-1}\sum_{r=0}^{u}\sum_{s=0}^u(-1)^{r+s}(u+1)(n-u)\binom{u}{r}\binom{u}{s}\Lambda^\Gamma_{k,m,n-u}\left(x_{j+(u-2r)k+(u-2s)m}^{\Gamma,+}\right)\\
        &+\frac{2}{n}\sum_{i=1}^n\sum_{r=0}^i\sum_{s=0}^i\sum_{u=i}^{n}\sum_{v=r}^{u+r-i}\sum_{w=s}^{u+s-i}(u-i+1)(-1)^{r+s}(-1)^{v-r+w-s}\binom{i}{r}\binom{i}{s}\binom{u-i}{v-r}\binom{u-i}{w-s}\\
        &\times\Lambda^\Gamma_{k,m,n-u}\left(x_{j+(u-2v)k+(u-2w)m}^{\Gamma,+}\right)\\
        &=\frac{1}{n}\sum_{u=0}^{n-1}\sum_{r=0}^{u}\sum_{s=0}^u(-1)^{r+s}(u+1)(n-u)\binom{u}{r}\binom{u}{s}\Lambda^\Gamma_{k,m,n-u}\left(x_{j+(u-2r)k+(u-2s)m}^{\Gamma,+}\right)\\
        &+\frac{2}{n}\sum_{u=1}^{n}\sum_{v=0}^{u}\sum_{w=0}^{u}\sum_{i=1}^u\sum_{r=0}^{v}\sum_{s=0}^{w}(-1)^{v+w}(u-i+1)\binom{i}{r}\binom{i}{s}\binom{u-i}{v-r}\binom{u-i}{w-s}\Lambda^\Gamma_{k,m,n-u}\\
        &\times\left(x_{j+(u-2v)k+(u-2w)m}^{\Gamma,+}\right)\\
        &=\frac{1}{n}\sum_{u=0}^{n-1}\sum_{r=0}^{u}\sum_{s=0}^u(-1)^{r+s}(u+1)(n-u)\binom{u}{r}\binom{u}{s}\Lambda^\Gamma_{k,m,n-u}\left(x_{j+(u-2r)k+(u-2s)m}^{\Gamma,+}\right)\\
        &+\frac{2}{n}\sum_{u=1}^{n}\sum_{v=0}^{u}\sum_{w=0}^{u}(-1)^{v+w}\sum_{i=1}^u(u-i+1)\binom{u}{v}\binom{u}{w}\Lambda^\Gamma_{k,m,n-u}\left(x_{j+(u-2v)k+(u-2w)m}^{\Gamma,+}\right)\\
        &\textnormal{by the Chu–-Vandermonde identity}\\
        &=\Lambda^\Gamma_{k,m,n}\left(x_{j}^{\Gamma,+}\right)\\
        &+\frac{1}{n}\sum_{u=1}^{n}\sum_{r=0}^{u}\sum_{s=0}^u(-1)^{r+s}(u+1)(n-u)\binom{u}{r}\binom{u}{s}\Lambda^\Gamma_{k,m,n-u}\left(x_{j+(u-2r)k+(u-2s)m}^{\Gamma,+}\right)\\
        &+\frac{2}{n}\sum_{u=1}^{n}\sum_{v=0}^{u}\sum_{w=0}^{u}(-1)^{v+w}\left(u(u+1)-\frac{u(u+1)}{2}\right)\binom{u}{v}\binom{u}{w}\Lambda^\Gamma_{k,m,n-u}\left(x_{j+(u-2v)k+(u-2w)m}^{\Gamma,+}\right)\\
        &=\Lambda^\Gamma_{k,m,n}\left(x_{j}^{\Gamma,+}\right)\\
        &+\frac{1}{n}\sum_{u=1}^{n}\sum_{r=0}^{u}\sum_{s=0}^u(-1)^{r+s}(u+1)(n-u)\binom{u}{r}\binom{u}{s}\Lambda^\Gamma_{k,m,n-u}\left(x_{j+(u-2r)k+(u-2s)m}^{\Gamma,+}\right)\\
        &+\frac{1}{n}\sum_{u=1}^{n}\sum_{v=0}^{u}\sum_{w=0}^{u}(-1)^{v+w}u(u+1)\binom{u}{v}\binom{u}{w}\Lambda^\Gamma_{k,m,n-u}\left(x_{j+(u-2v)k+(u-2w)m}^{\Gamma,+}\right)\\
        &=\Lambda^\Gamma_{k,m,n}\left(x_{j}^{\Gamma,+}\right)+\sum_{u=1}^{n}\sum_{r=0}^{u}\sum_{s=0}^u(-1)^{r+s}(u+1)\binom{u}{r}\binom{u}{s}\Lambda^\Gamma_{k,m,n-u}\left(x_{j+(u-2r)k+(u-2s)m}^{\Gamma,+}\right)\\
        &=\sum_{u=0}^{n}\sum_{r=0}^{u}\sum_{s=0}^u(-1)^{r+s}(u+1)\binom{u}{r}\binom{u}{s}\Lambda^\Gamma_{k,m,n-u}\left(x_{j+(u-2r)k+(u-2s)m}^{\Gamma,+}\right)
    \end{align*}
    This concludes the proof of the $r=1$ case. Assume the proposition for some $r\in\mathbb N$. Then
    \begin{align*}
        (r+1)\left(x_{j}^{\Gamma,+}\right)^{(r+1)}\Lambda_{k,m,n}^\Gamma
        &=\left(x_{j}^{\Gamma,+}\right)\left(x_{j}^{\Gamma,+}\right)^{(r)}\Lambda_{k,m,n}^\Gamma\\
        &=\sum_{i=0}^n\left(x_{j}^{\Gamma,+}\right)\Lambda_{k,m,n-i}^\Gamma\prod_{\substack{v_0\ldots v_u\in{\mathbb Z}_+\\\sum v_u=r\\\sum uv_u=i}}\left((u+1)D^{\Gamma,+}_{u}(j,k,m)\right)^{(v_u)}\\
        &\textnormal{by the induction hypothesis}\\
        &=\sum_{i=0}^n\sum_{s=0}^{n-i}\Lambda^\Gamma_{k,m,n-i-s}(s+1)D_s^{\Gamma,+}(j,k,m)\prod_{\substack{v_0\ldots v_u\in{\mathbb Z}_+\\\sum v_u=r\\\sum uv_u=i}}\left((u+1)D^{\Gamma,+}_{u}(j,k,m)\right)^{(v_u)}\\
        &=\sum_{i=0}^n\sum_{s=0}^{n-i}\Lambda^\Gamma_{k,m,n-i-s}\prod_{\substack{v_0\ldots v_u\in{\mathbb Z}_+\\u\neq s\\\sum v_u=r-v_s\\\sum uv_u=i-sv_s}}\left((u+1)D^{\Gamma,+}_{u}(j,k,m)\right)^{(v_u)}\\
        &\times\left((s+1)D^{\Gamma,+}_{s}(j,k,m)\right)^{(v_s)}(s+1)D_s^{\Gamma,+}(j,k,m)\\
        &=\sum_{i=0}^n\sum_{s=0}^{n-i}\Lambda^\Gamma_{k,m,n-i-s}\prod_{\substack{v_0\ldots v_u\in{\mathbb Z}_+\\u\neq s\\\sum v_u=r-v_s\\\sum uv_u=i-sv_s}}\left((u+1)D^{\Gamma,+}_{u}(j,k,m)\right)^{(v_u)}\\
        &\times(v_s+1)\left((s+1)D^{\Gamma,+}_{s}(j,k,m)\right)^{(v_s+1)}\\
        &=\sum_{i=0}^n\sum_{s=i}^{n}\Lambda^\Gamma_{k,m,n-s}\prod_{\substack{v_0\ldots v_u\in{\mathbb Z}_+\\\sum v_u=r+1\\\sum uv_u=s}}(v_{s-i})\left((u+1)D^{\Gamma,+}_{u}(j,k,m)\right)^{(v_u)}\\
        &=\sum_{s=0}^{n}\Lambda^\Gamma_{k,m,n-s}\prod_{\substack{v_0\ldots v_u\in{\mathbb Z}_+\\\sum v_u=r+1\\\sum uv_u=s}}\sum_{i=0}^s(v_{s-i})\left((u+1)D^{\Gamma,+}_{u}(j,k,m)\right)^{(v_u)}\\
        &=\sum_{s=0}^{n}\Lambda^\Gamma_{k,m,n-s}\prod_{\substack{v_0\ldots v_u\in{\mathbb Z}_+\\\sum v_u=r+1\\\sum uv_u=s}}\sum_{i=0}^s(v_{s})\left((u+1)D^{\Gamma,+}_{u}(j,k,m)\right)^{(v_u)}\\
        &=(r+1)\sum_{s=0}^{n}\Lambda^\Gamma_{k,m,n-s}\prod_{\substack{v_0\ldots v_u\in{\mathbb Z}_+\\\sum v_u=r+1\\\sum uv_u=s}}\left((u+1)D^{\Gamma,+}_{u}(j,k,m)\right)^{(v_u)}\\
    \end{align*}
\end{proof}

\subsection{Proof of Proposition \ref{straightening} (7)}
We will proceed following the method of proof of Lemma 5.4 in \cite{C}, which was in turn modeled on the proof of Lemma 5.1 in \cite{CP}. We begin with some necessary propositions.

\begin{prop}\label{bracketxp}
    For all $i,j,k,l,m\in\mathbb N$
    \begin{enumerate}
        \item 
        \begin{equation*}
            \left[\left(x_j^{\Gamma,+}\right),p_i^\Gamma(k,m)\right]=-2D_i^{\Gamma,+}(j,k,m)
        \end{equation*}
        \item 
        \begin{equation*}
            \left[p_i^\Gamma(k,m),\left(x_l^{\Gamma,-}\right)\right]=-2D_i^{\Gamma,-}(l,k,m)
        \end{equation*}
    \end{enumerate}
\end{prop}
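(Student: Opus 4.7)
The plan is to bypass the combinatorics by passing to the loop-algebra realization $\mathcal O\cong(\widetilde{\mathfrak{sl}_2})^\Gamma$ and writing each of $\Lambda^\Gamma_{j,l,1}$, $D^{\Gamma,\pm}_{u,1}(j,l)$, $p^\Gamma_i(k,m)$, and $D^{\Gamma,\pm}_i(j,k,m)$ as a single tensor of the form $(\text{element of }\mathfrak{sl}_2)\otimes(\text{Laurent polynomial in }t)$. Once these closed forms are in hand, both parts reduce to a one-line calculation using the ambient $\mathfrak{sl}_2$-identities $[x^\Gamma_\pm,h^\Gamma]=\mp 2x^\Gamma_\pm$ and $[x^\Gamma_+,x^\Gamma_-]=h^\Gamma$.

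The starting identity is $\Lambda^\Gamma_{j,l,1}=-(h^\Gamma_{j+l}-h^\Gamma_{j-l})=-h^\Gamma\otimes(t^j-t^{-j})(t^l-t^{-l})$. An induction on $u$, based on the recursion $D^{\Gamma,\pm}_{u,1}(j,l)=\pm\tfrac12\bigl[D^{\Gamma,\pm}_{u-1,1}(j,l),\Lambda^\Gamma_{j,l,1}\bigr]$ together with $[x^\Gamma_\pm,h^\Gamma]=\mp 2x^\Gamma_\pm$, then yields
\[
D^{\Gamma,+}_{u,1}(j,l)=x^\Gamma_+\otimes(t^j-t^{-j})^{u+1}(t^l-t^{-l})^{u},\qquad
D^{\Gamma,-}_{u,1}(j,l)=x^\Gamma_-\otimes(t^j-t^{-j})^{u}(t^l-t^{-l})^{u+1}.
\]
Plugging the second into $p^\Gamma_i(k,m)=[x_k^{\Gamma,+},D^{\Gamma,-}_{i-1,1}(k,m)]$ and using $[x^\Gamma_+,x^\Gamma_-]=h^\Gamma$ gives $p^\Gamma_i(k,m)=h^\Gamma\otimes(t^k-t^{-k})^i(t^m-t^{-m})^i$. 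A parallel computation, built on the binomial identity $\sum_{n=0}^i(-1)^n\binom{i}{n}t^{(i-2n)k}=(t^k-t^{-k})^i$ applied to both the $t^{+j}$ and $t^{-j}$ halves of each $x^{\Gamma,\pm}_{j+(i-2n)k+(i-2v)m}$ (the latter via the reindexing $n\mapsto i-n$, $v\mapsto i-v$), collapses the double sum in the definition of $D^{\Gamma,\pm}_i(j,k,m)$ to
\[
D^{\Gamma,\pm}_i(j,k,m)=x^\Gamma_\pm\otimes(t^j-t^{-j})(t^k-t^{-k})^i(t^m-t^{-m})^i.
\]

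With these tensor expressions, both claims become one-line computations: for (1),
\[
\bigl[x_j^{\Gamma,+},p^\Gamma_i(k,m)\bigr]=[x^\Gamma_+,h^\Gamma]\otimes(t^j-t^{-j})(t^k-t^{-k})^i(t^m-t^{-m})^i=-2D^{\Gamma,+}_i(j,k,m),
\]
and (2) is identical with $[h^\Gamma,x^\Gamma_-]=-2x^\Gamma_-$ in place of $[x^\Gamma_+,h^\Gamma]=-2x^\Gamma_+$. The only real work is the sign-bookkeeping in the induction of the previous paragraph, where the factors $-1$ from $\Lambda^\Gamma_{j,l,1}$, $\pm\tfrac12$ from the recursion, and $\mp 2$ from $[x^\Gamma_\pm,h^\Gamma]$ must cancel to preserve the unit coefficient at each step. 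Alternatively, (2) can be deduced from (1) via the involution $\tau$ of Proposition~\ref{Du1}, since $\tau(h^\Gamma_k)=-h^\Gamma_k$ forces $\tau(p^\Gamma_i(k,m))=-p^\Gamma_i(k,m)$.
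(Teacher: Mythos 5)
Your proof is correct, and it takes a genuinely different route from the paper's. The paper proves part (1) by substituting the explicit expansion of $p_i^\Gamma(k,m)$ as a ${\mathbb Z}$-linear combination of the $h^{\Gamma}_{n}$ from Proposition \ref{p_u}, computing each $\left[\left(x_j^{\Gamma,+}\right),\left(h^\Gamma_n\right)\right]$, and then folding the resulting four families of sums back into the double sum defining $D_i^{\Gamma,+}(j,k,m)$ through a chain of reindexings and parity arguments on binomial coefficients (part (2) is declared similar). You instead pass to the realization $\mathcal O\subset\widetilde{\lie{sl}_2}$ and observe that every object involved is a pure tensor; I checked your four closed forms and they are all correct: the inductions giving $D^{\Gamma,+}_{u,1}(j,l)=x^\Gamma_+\otimes(t^j-t^{-j})^{u+1}(t^l-t^{-l})^{u}$ and $D^{\Gamma,-}_{u,1}(j,l)=x^\Gamma_-\otimes(t^j-t^{-j})^{u}(t^l-t^{-l})^{u+1}$ go through with the signs cancelling as you say, the formula $p^\Gamma_i(k,m)=h^\Gamma\otimes(t^k-t^{-k})^i(t^m-t^{-m})^i$ follows, and the reindexing $n\mapsto i-n$, $v\mapsto i-v$ does collapse the double sum for $D^{\Gamma,\pm}_i(j,k,m)$ to the stated pure tensor. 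The one point you should make explicit is the verification that the triple $\left\{x_-^\Gamma,h^\Gamma,x_+^\Gamma\right\}$ satisfies $[h^\Gamma,x_\pm^\Gamma]=\pm2x_\pm^\Gamma$ and $[x_+^\Gamma,x_-^\Gamma]=h^\Gamma$ with these exact structure constants; the paper only asserts that the span is isomorphic to $\lie{sl}_2$, though the computation is short and the constants do come out as you assume. What your approach buys is a drastic shortening, and as a by-product the closed tensor forms re-derive Propositions \ref{Du1} and \ref{p_u} essentially for free as binomial expansions of the polynomial factors; what the paper's approach buys is that it stays entirely inside the presentation of $\mathcal O$ by the basis $\mathcal B$ and its structure constants, never appealing to the ambient loop algebra. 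Your closing remark that (2) follows from (1) via the involution $\tau$ (using $\tau\left(p_i^\Gamma(k,m)\right)=-p_i^\Gamma(k,m)$ and $\tau\left(D_i^{\Gamma,+}(j,k,m)\right)=D_i^{\Gamma,-}(j,k,m)$) is also valid and is in the same spirit as the paper's own use of $\tau$ to deduce \eqref{D-u1} from \eqref{D+u1}.
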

\begin{proof}
    We will only prove $(1)$ in detail, because the proof of $(2)$ is similar. By Proposition \ref{p_u} we have
    \begin{align*}
        \left[\left(x_j^{\Gamma,+}\right),p_i^\Gamma(k,m)\right]
        &=\sum_{r=0}^{\left\lfloor\frac{i-1}{2}\right\rfloor}\sum_{s=0}^i(-1)^{r+s}\binom{i}{r}\binom{i}{s}\left[\left(x_j^{\Gamma,+}\right),\left(h^\Gamma_{(i-2s)k+(i-2r)m}\right)\right]\\
        &+((i+1)\mod2)\sum_{s=0}^i(-1)^{\frac{i}{2}+s}\binom{i-1}{\frac{i-2}{2}}\binom{i}{s}\left[\left(x_j^{\Gamma,+}\right),\left(h^\Gamma_{(i-2s)k}\right)\right]\\
        &=-2\sum_{r=0}^{\left\lfloor\frac{i-1}{2}\right\rfloor}\sum_{s=0}^i(-1)^{r+s}\binom{i}{r}\binom{i}{s}\left(\left(x_{j+(i-2s)k+(i-2r)m}^{\Gamma,+}\right)+\left(x_{j-(i-2s)k-(i-2r)m}^{\Gamma,+}\right)\right)\\
        &-2((i+1)\mod2)\sum_{s=0}^i(-1)^{\frac{i}{2}+s}\binom{i-1}{\frac{i-2}{2}}\binom{i}{s}\left(\left(x_{j+(i-2s)k}^{\Gamma,+}\right)+\left(x_{j-(i-2s)k}^{\Gamma,+}\right)\right)\\
        &=-2\sum_{r=0}^{\left\lfloor\frac{i-1}{2}\right\rfloor}\sum_{s=0}^i(-1)^{r+s}\binom{i}{r}\binom{i}{s}\left(x_{j+(i-2s)k+(i-2r)m}^{\Gamma,+}\right)\\
        &-2\sum_{r=0}^{\left\lfloor\frac{i-1}{2}\right\rfloor}\sum_{s=0}^i(-1)^{r+s}\binom{i}{r}\binom{i}{s}\left(x_{j-(i-2s)k-(i-2r)m}^{\Gamma,+}\right)\\
        &-2((i+1)\mod2)\sum_{s=0}^i(-1)^{\frac{i}{2}+s}\binom{i-1}{\frac{i-2}{2}}\binom{i}{s}\left(x_{j+(i-2s)k}^{\Gamma,+}\right)\\
        &-2((i+1)\mod2)\sum_{s=0}^i(-1)^{\frac{i}{2}+s}\binom{i-1}{\frac{i-2}{2}}\binom{i}{s}\left(x_{j-(i-2s)k}^{\Gamma,+}\right)\\
        &=-2\sum_{r=0}^{\left\lfloor\frac{i-1}{2}\right\rfloor}\sum_{s=0}^i(-1)^{r+s}\binom{i}{r}\binom{i}{s}\left(x_{j+(i-2s)k+(i-2r)m}^{\Gamma,+}\right)\\
        &-2\sum_{r=\left\lfloor\frac{i+2}{2}\right\rfloor}^{i}\sum_{s=0}^i(-1)^{i-r+i-s}\binom{i}{r}\binom{i}{s}\left(x_{j+(i-2s)k+(i-2r)m}^{\Gamma,+}\right)\\
        &-2((i+1)\mod2)\sum_{s=0}^i(-1)^{\frac{i}{2}+s}\binom{i-1}{\frac{i-2}{2}}\binom{i}{s}\left(x_{j+(i-2s)k}^{\Gamma,+}\right)\\
        &-2((i+1)\mod2)\sum_{s=0}^i(-1)^{\frac{3i}{2}-s}\binom{i-1}{\frac{i-2}{2}}\binom{i}{s}\left(x_{j+(i-2s)k}^{\Gamma,+}\right)\\
        &=-2\sum_{r=0}^{\left\lfloor\frac{i-1}{2}\right\rfloor}\sum_{s=0}^i(-1)^{r+s}\binom{i}{r}\binom{i}{s}\left(x_{j+(i-2s)k+(i-2r)m}^{\Gamma,+}\right)\\
        &-2\sum_{r=\left\lfloor\frac{i+2}{2}\right\rfloor}^{i}\sum_{s=0}^i(-1)^{r+s}\binom{i}{r}\binom{i}{s}\left(x_{j+(i-2s)k+(i-2r)m}^{\Gamma,+}\right)\\
        &-2((i+1)\mod2)\sum_{s=0}^i(-1)^{\frac{i}{2}+s}\binom{i}{\frac{i}{2}}\binom{i}{s}\left(x_{j+(i-2s)k}^{\Gamma,+}\right)\\
        &=-2\sum_{r=0}^{i}\sum_{s=0}^i(-1)^{r+s}\binom{i}{r}\binom{i}{s}\left(x_{j+(i-2s)k+(i-2r)m}^{\Gamma,+}\right)
    \end{align*}
\end{proof}

\begin{prop}\label{bracketpD}
    For all $i,j,k,m\in\mathbb N$ and $u\in{\mathbb Z}_+$ 
    \begin{equation*}
        \left[p_m^\Gamma(j,l),D_{u,1}^{\Gamma,+}(j,l)\right]=2D_{m+u,1}^{\Gamma,+}(j,l)
    \end{equation*}
\end{prop}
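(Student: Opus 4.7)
The plan is to prove the identity by induction on $u$, built on two preliminary observations. First, unfolding the definitions shows that $\Lambda^\Gamma_{j,l,1} = -p_1^\Gamma(j,l)$: on the one hand $\Lambda^\Gamma_{j,l,1} = -(h^\Gamma_{j+l} - h^\Gamma_{j-l})$, while $p_1^\Gamma(j,l) = [x_j^{\Gamma,+}, x_l^{\Gamma,-}] = h^\Gamma_{j+l} - h^\Gamma_{|j-l|}$, and these agree since $h^\Gamma_{-k} = h^\Gamma_k$. Second, by Proposition \ref{p_u} every $p_i^\Gamma(j,l)$ is a $\mathbb Z$-linear combination of the $h^\Gamma_n$'s, hence any two of the $p_i^\Gamma$'s commute in $U(\mathcal O)$. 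Together these give $[p_m^\Gamma, \Lambda^\Gamma_{j,l,1}] = 0$ for all $m \geq 1$, and the identity $[p_1^\Gamma, D^{\Gamma,+}_{u,1}] = 2 D^{\Gamma,+}_{u+1,1}$ for all $u \geq 0$, which is simply the defining recursion for $D^{\Gamma,+}_{u+1,1}$ rewritten.

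For the base case $u = 0$, I would prove $[p_m^\Gamma, x_j^{\Gamma,+}] = 2 D^{\Gamma,+}_{m,1}$ in two subcases. For $m = 1$, the statement is exactly the definition of $D^{\Gamma,+}_{1,1}$ after substituting $\Lambda^\Gamma_{j,l,1} = -p_1^\Gamma$. For $m \geq 2$, I would invoke Proposition \ref{pnewD} with $k = 0$ to write $p_m^\Gamma = [D^{\Gamma,+}_{m-1,1}, x_l^{\Gamma,-}]$ and expand via Jacobi:
\[
[p_m^\Gamma, x_j^{\Gamma,+}] = [D^{\Gamma,+}_{m-1,1}, [x_l^{\Gamma,-}, x_j^{\Gamma,+}]] - [x_l^{\Gamma,-}, [D^{\Gamma,+}_{m-1,1}, x_j^{\Gamma,+}]].
\]
The second commutator vanishes because $D^{\Gamma,+}_{m-1,1}$ is a $\mathbb Z$-linear combination of $x_n^{\Gamma,+}$'s, and these commute among themselves since $[x_+^\Gamma, x_+^\Gamma] = 0$ in $\mathfrak{sl}_2$. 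Using $[x_l^{\Gamma,-}, x_j^{\Gamma,+}] = -p_1^\Gamma$, what remains is $[p_1^\Gamma, D^{\Gamma,+}_{m-1,1}] = 2 D^{\Gamma,+}_{m,1}$ by the preliminary observation.

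For the inductive step, assuming $[p_m^\Gamma, D^{\Gamma,+}_{u,1}] = 2 D^{\Gamma,+}_{m+u,1}$ for some $u$ and all $m \geq 1$, I would use the recursion $2 D^{\Gamma,+}_{u+1,1} = [D^{\Gamma,+}_{u,1}, \Lambda^\Gamma_{j,l,1}]$ and Jacobi to write
\[
2\,[p_m^\Gamma, D^{\Gamma,+}_{u+1,1}] = [[p_m^\Gamma, D^{\Gamma,+}_{u,1}], \Lambda^\Gamma_{j,l,1}] + [D^{\Gamma,+}_{u,1}, [p_m^\Gamma, \Lambda^\Gamma_{j,l,1}]].
\]
The last term vanishes by the commutativity observation, and the first simplifies by the induction hypothesis and the defining recursion to $[2 D^{\Gamma,+}_{m+u,1}, \Lambda^\Gamma_{j,l,1}] = 4 D^{\Gamma,+}_{m+u+1,1}$. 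Dividing by $2$ gives the conclusion.

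The delicate point is arranging the argument to avoid circularity: the key is recognizing that the $m = 1$ instance of the proposition is itself just a rewrite of the definition of $D^{\Gamma,+}_{u+1,1}$, which allows the induction on $u$ to bootstrap directly from the definitions. The only nontrivial inputs beyond Jacobi are the commutativity of the $p_i^\Gamma$'s (from Proposition \ref{p_u}) and the mutual commutativity of the $x_n^{\Gamma,+}$'s; no combinatorial identities are required.
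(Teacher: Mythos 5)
Your proof is correct, and its inductive step is exactly the paper's: write $2D^{\Gamma,+}_{u+1,1}=[D^{\Gamma,+}_{u,1},\Lambda^\Gamma_{j,l,1}]$, apply the Jacobi identity, kill the term $[D^{\Gamma,+}_{u,1},[p_m^\Gamma,\Lambda^\Gamma_{j,l,1}]]$ using the commutativity of the ``Cartan-type'' elements, and close with the induction hypothesis and the defining recursion. Where you genuinely diverge is the base case $u=0$. The paper disposes of it with the single sentence ``the case $u=0$ is Proposition \ref{bracketxp},'' i.e.\ it leans on the explicit combinatorial computation $[\left(x_j^{\Gamma,+}\right),p_i^\Gamma(k,m)]=-2D_i^{\Gamma,+}(j,k,m)$, which in turn rests on the closed formula for $p_u^\Gamma$ in Proposition \ref{p_u} (and tacitly on identifying $D_m^{\Gamma,+}(j,j,l)$ with $D_{m,1}^{\Gamma,+}(j,l)$). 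You instead obtain $[p_m^\Gamma,x_j^{\Gamma,+}]=2D^{\Gamma,+}_{m,1}(j,l)$ structurally: for $m=1$ it is the definition of $D^{\Gamma,+}_{1,1}$ rewritten via $\Lambda^\Gamma_{j,l,1}=-p_1^\Gamma(j,l)$, and for $m\geq2$ you feed Proposition \ref{pnewD} (with $k=0$) into the Jacobi identity and use that the $x_n^{\Gamma,+}$ span an abelian subalgebra, so $[D^{\Gamma,+}_{m-1,1},x_j^{\Gamma,+}]=0$. Both routes are sound and both orderings of the lemmas are non-circular (Proposition \ref{pnewD} precedes this one and does not depend on it). What your version buys is a base case with no binomial bookkeeping, at the cost of invoking Proposition \ref{pnewD} and the linear-combination facts about $D^{\Gamma,+}_{m-1,1}$ and the $p_i^\Gamma$; what the paper's version buys is a one-line citation, at the cost of hiding the work inside Propositions \ref{p_u} and \ref{bracketxp}. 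Your observation that only Jacobi, the commutativity of the $p_i^\Gamma$, and the commutativity of the $x_n^{\Gamma,+}$ are needed is accurate and, if anything, makes the logical dependencies cleaner than in the printed proof.
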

\begin{proof}
    We will proceed by induction on $u$. The case $u=0$ is Proposition \ref{bracketxp}. Assume the claim for some $u\in\mathbb N$. Then we have
    \begin{align*}
        \left[p_m^\Gamma(j,l),D_{u+1,1}^{\Gamma,+}(j,l)\right]
        &=\frac{1}{2}\left[p_m^\Gamma(j,l),\left[D_{u,1}^{\Gamma,+}(j,l),\Lambda^\Gamma_{j,l,1}\right]\right]\\
        &=-\frac{1}{2}\left(\left[D_{u,1}^{\Gamma,+}(j,l),\left[\Lambda^\Gamma_{j,l,1},p_m^\Gamma(j,l)\right]\right]+\left[\Lambda^\Gamma_{j,l,1},\left[p_m^\Gamma(j,l),D_{u,1}^{\Gamma,+}(j,l)\right]\right]\right)\\
        &\textnormal{ by the Jacobi Identity}\\
        &=-\frac{1}{2}\left[\Lambda^\Gamma_{j,l,1},\left[p_m^\Gamma(j,l),D_{u,1}^{\Gamma,+}(j,l)\right]\right]\\
        &=-\left[\Lambda^\Gamma_{j,l,1},D_{m+u,1}^{\Gamma,+}(j,l)\right]\textnormal{ by the induction hypothesis}\\
        &=\left[D_{m+u,1}^{\Gamma,+}(j,l),\Lambda^\Gamma_{j,l,1}\right]\\
        &=2D_{m+u+1,1}^{\Gamma,+}(j,l)
    \end{align*}
\end{proof}

\begin{prop}\label{Du1Lambda}
    For all $u,n\in{\mathbb Z}_+$ and $j,l\in\mathbb N$
    \begin{equation*}
        D^{\Gamma,+}_{u,1}(j,l)\Lambda^\Gamma_{j,l,n}=\sum_{i=0}^n(i+1)\Lambda^\Gamma_{j,l,n-i}D^{\Gamma,+}_{i+u,1}(j,l)
    \end{equation*}
\end{prop}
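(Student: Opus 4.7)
The plan is to induct on $n$. The base case $n=0$ is immediate because $\Lambda^\Gamma_{j,l,0}=1$, so both sides equal $D^{\Gamma,+}_{u,1}(j,l)$. For the inductive step, abbreviate $D_u := D^{\Gamma,+}_{u,1}(j,l)$, $\Lambda_m := \Lambda^\Gamma_{j,l,m}$, and $p_k := p_k^\Gamma(j,l)$, and expand the left-hand side using the defining recursion for $\Lambda_n$ to get
$$n\, D_u \Lambda_n \;=\; -\sum_{k=1}^n D_u\, p_k\, \Lambda_{n-k}.$$
Three facts will drive the manipulation: (i) $D_u p_k = p_k D_u - 2D_{u+k}$ by Proposition \ref{bracketpD}; (ii) by Proposition \ref{p_u} every $p_k$ is a $\mathbb Z$-linear combination of the $h_n^\Gamma$, which commute among themselves, so every $p_k$ commutes with every $\Lambda_m$; and (iii) the inductive hypothesis applies to both $D_u \Lambda_{n-k}$ and $D_{u+k}\Lambda_{n-k}$ since $n-k<n$.

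Substituting (i)--(iii) and reindexing the resulting double sum by $a = n-k-i$ produces two contributions. The ``$D$-only'' contribution (coming from the $-2D_{u+k}$ term after applying the IH to $D_{u+k}\Lambda_{n-k}$) simplifies, via the arithmetic sum $\sum_{j=0}^{t-1}(j+1) = t(t+1)/2$, to $\sum_{c=1}^n c(c+1)\,\Lambda_{n-c} D_{u+c}$. The remaining contribution has the shape $-\sum_{a=0}^{n-1}\sum_{i=0}^{n-a-1}(i+1)\,\Lambda_a\, p_{n-a-i}\, D_{u+i}$. The key observation is that, for each fixed $i$, the inner sum $\sum_{a=0}^{n-i-1} p_{n-a-i}\Lambda_a$ (after re-letting $k = n-a-i$) becomes $\sum_{k=1}^{n-i} p_k\,\Lambda_{n-i-k}$, which equals $-(n-i)\Lambda_{n-i}$ by the defining recursion of $\Lambda$ used \emph{in reverse}. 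This collapses the remaining piece to $\sum_{i=0}^{n-1}(i+1)(n-i)\,\Lambda_{n-i}D_{u+i}$.

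Combining the two simplified pieces and using the elementary identity $(i+1)(n-i) + i(i+1) = n(i+1)$ (with the $i=0$ term supplied by the first sum and the $i=n$ term by the second) yields exactly $n\sum_{i=0}^n (i+1)\,\Lambda_{n-i} D_{u+i}$; dividing by $n$ completes the induction. The main obstacle is purely bookkeeping: one must carefully reindex several nested sums and verify that the ``extra'' contributions arising from the commutator $[D_u,p_k]=-2D_{u+k}$ cancel cleanly against the contribution of the $D_{u+k}\Lambda_{n-k}$ piece of the inductive hypothesis. The conceptual insight making the cancellation work is that the defining recursion for $\Lambda_n$ can be applied both forward (to open up $D_u\Lambda_n$) and backward (to rebuild $\Lambda_{n-i}$ out of the dangling $p\,\Lambda$ products), which is only legitimate because fact (ii) lets one freely commute the $p_k$'s past the $\Lambda_m$'s.
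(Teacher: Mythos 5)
Your proof is correct, but it takes a genuinely different route from the paper's. You induct on $n$, opening $\Lambda^\Gamma_{j,l,n}$ with the Newton-type recursion $n\Lambda^\Gamma_{j,l,n}=-\sum_{k=1}^n p_k^\Gamma(j,l)\Lambda^\Gamma_{j,l,n-k}$, pushing each $p_k^\Gamma(j,l)$ past $D^{\Gamma,+}_{u,1}(j,l)$ via Proposition \ref{bracketpD}, and then running the same recursion backwards to reassemble the $\Lambda^\Gamma_{j,l,n-i}$; the combinatorial identity $(i+1)(n-i)+i(i+1)=n(i+1)$ closes the induction. (I checked the reindexings: the $p\Lambda$-piece collapses to $\sum_{i=0}^{n-1}(i+1)(n-i)\Lambda^\Gamma_{j,l,n-i}D^{\Gamma,+}_{u+i,1}(j,l)$ and the commutator piece to $\sum_{c=1}^{n}c(c+1)\Lambda^\Gamma_{j,l,n-c}D^{\Gamma,+}_{u+c,1}(j,l)$, which do combine as you claim, and the division by $n$ is harmless since $n\geq1$ in the inductive step.) The paper instead inducts on $u$: it takes the $u=0$ case as the $r=1$ instance of Proposition \ref{straightening}(8), and steps up via $D^{\Gamma,+}_{u+1,1}(j,l)=\tfrac12\left[D^{\Gamma,+}_{u,1}(j,l),\Lambda^\Gamma_{j,l,1}\right]$, the Jacobi identity, and commutativity of the $\Lambda$'s. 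Your argument buys self-containedness — it never invokes Proposition \ref{straightening}(8), whose own proof is a long double induction, and in fact re-derives its $r=1$, $(k,m)=(j,l)$ specialization for free — at the cost of heavier reindexing; the paper's version buys a shorter inductive step by leaning on machinery already established. Do make explicit, as you did in fact (ii), that the commutation of $p_k^\Gamma(j,l)$ with $\Lambda^\Gamma_{j,l,m}$ rests on Proposition \ref{p_u} together with the abelianness of the span of the $h^\Gamma_n$, since both the forward and backward uses of the recursion depend on it.
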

\begin{proof}
    We will proceed by induction on $u$. The case $u=0$ is Proposition \ref{straightening}(8). Assume the claim for some $u\in{\mathbb Z}_+$. Then we have
    \begin{align*}
        \left[D^{\Gamma,+}_{u+1,1}(j,l),\Lambda^\Gamma_{j,l,n}\right]
        &=\frac{1}{2}\left[\left[D^{\Gamma,+}_{u,1}(j,l),\Lambda^\Gamma_{j,l,1}\right],\Lambda^\Gamma_{j,l,n}\right]\\
        &=\frac{1}{2}\left[D^{\Gamma,+}_{u,1}(j,l),\left[\Lambda^\Gamma_{j,l,1},\Lambda^\Gamma_{j,l,n}\right]\right]+\frac{1}{2}\left[\Lambda^\Gamma_{j,l,1},\left[\Lambda^\Gamma_{j,l,n},D^{\Gamma,+}_{u,1}(j,l)\right]\right]\\
        &\textnormal{ by the Jacobi Identity}\\
        &=-\frac{1}{2}\sum_{i=1}^n(i+1)\left[\Lambda^\Gamma_{j,l,1},\Lambda^\Gamma_{j,l,n-i}D^{\Gamma,+}_{i+u,1}(j,l)\right]\textnormal{ by the induction hypothesis}\\
        &=-\frac{1}{2}\sum_{i=1}^n(i+1)\Lambda^\Gamma_{j,l,1}\Lambda^\Gamma_{j,l,n-i}D^{\Gamma,+}_{i+u,1}(j,l)+\frac{1}{2}\sum_{i=1}^n(i+1)\Lambda^\Gamma_{j,l,n-i}D^{\Gamma,+}_{i+u,1}(j,l)\Lambda^\Gamma_{j,l,1}\\
        &=\frac{1}{2}\sum_{i=1}^n(i+1)\Lambda^\Gamma_{j,l,n-i}\left[D^{\Gamma,+}_{i+u,1}(j,l),\Lambda^\Gamma_{j,l,1}\right]\\
        &=\sum_{i=1}^n(i+1)\Lambda^\Gamma_{j,l,n-i}D^{\Gamma,+}_{i+u+1,1}(j,l)
    \end{align*}
\end{proof}

\begin{prop}\label{LambdaD+}
    For all $i,k\in{\mathbb Z}_+$ and $j,l\in\mathbb N$ we have
    \begin{equation*}
        \Lambda^\Gamma_{j,l,i}D_{k,1}^{\Gamma,+}(j,l)=D_{k,1}^{\Gamma,+}(j,l)\Lambda^\Gamma_{j,l,i}-2D_{k+1,1}^{\Gamma,+}(j,l)\Lambda^\Gamma_{j,l,i-1}+D_{k+2,1}^{\Gamma,+}(j,l)\Lambda^\Gamma_{j,l,i-2}
    \end{equation*}
\end{prop}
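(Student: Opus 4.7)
The plan is strong induction on $i$. The base case $i=0$ is immediate: $\Lambda^\Gamma_{j,l,0}=1$ and the two shifted terms on the right vanish by the convention $\Lambda^\Gamma_{j,l,s}=0$ for $s<0$, so both sides reduce to $D_{k,1}^{\Gamma,+}(j,l)$.

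For the inductive step, the pivotal tool is Proposition \ref{Du1Lambda}, which expresses $D \cdot \Lambda$ in terms of $\Lambda \cdot D$. I would isolate the $n=0$ term of that identity to obtain
\[
\Lambda^\Gamma_{j,l,i}\, D_{k,1}^{\Gamma,+}(j,l) \;=\; D_{k,1}^{\Gamma,+}(j,l)\, \Lambda^\Gamma_{j,l,i} \;-\; \sum_{n=1}^{i} (n+1)\, \Lambda^\Gamma_{j,l,i-n}\, D_{n+k,1}^{\Gamma,+}(j,l).
\]
Since $i-n<i$ for each $n\geq 1$, the inductive hypothesis applies to every summand $\Lambda^\Gamma_{j,l,i-n}\, D_{n+k,1}^{\Gamma,+}(j,l)$, rewriting it as a three-term expression with the $D$-indices shifted by $0$, $1$, $2$ and the $\Lambda$-indices shifted by $0$, $-1$, $-2$. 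The strategy is to substitute these expansions and then reindex each of the three resulting nested sums so that the $D$-factor uniformly reads $D_{m+k,1}^{\Gamma,+}(j,l)$: set $m=n$ in the first piece, $m=n+1$ in the second, $m=n+2$ in the third.

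After this reindexing, every term has the common shape $D_{m+k,1}^{\Gamma,+}(j,l)\,\Lambda^\Gamma_{j,l,i-m}$, and the combined coefficient at each $m$ is $-(m+1)+2m-(m-1)$, where the second and third contributions are present only for $m\geq 2$ and $m\geq 3$ respectively. This evaluates to $-2$ at $m=1$, to $+1$ at $m=2$, and identically to $0$ for $m\geq 3$. Only the $m=1$ and $m=2$ terms therefore survive, and they match the right-hand side of the proposition exactly.

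The main obstacle is the bookkeeping of the three simultaneous reindexings, especially making sure that the putative contributions where $i-n-1$ or $i-n-2$ becomes negative are legitimately dropped by the convention $\Lambda^\Gamma_{j,l,s}=0$ for $s<0$ rather than producing stray terms. Once this is handled, the elementary identity $-(m+1)+2m-(m-1)=0$ for $m\geq 3$ is the whole combinatorial content that forces the inductive expansion to collapse to the claimed two-term tail.
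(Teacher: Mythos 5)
Your proof is correct, and it takes a genuinely different route from the paper's. You obtain the identity by inverting Proposition \ref{Du1Lambda}: isolating the $n=0$ term of $D_{k,1}^{\Gamma,+}\Lambda^\Gamma_{j,l,i}=\sum_{n=0}^i(n+1)\Lambda^\Gamma_{j,l,i-n}D_{n+k,1}^{\Gamma,+}$, applying the (strong) inductive hypothesis to each lower-order product $\Lambda^\Gamma_{j,l,i-n}D_{n+k,1}^{\Gamma,+}$ with $n\ge 1$, and collapsing the three reindexed sums via $-(m+1)+2m-(m-1)=0$ for $m\ge 3$; this is legitimate and non-circular, since \ref{Du1Lambda} is established in the paper before \ref{LambdaD+} and its proof does not rely on \ref{LambdaD+}. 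The paper instead runs its induction through the Newton-type recursion $(i+1)\Lambda^\Gamma_{j,l,i+1}=-\sum_{m=1}^{i+1}p_m^\Gamma(j,l)\Lambda^\Gamma_{j,l,i+1-m}$, commuting each $p_m^\Gamma(j,l)$ past the $D$'s via Proposition \ref{bracketpD} and then recombining, which is considerably longer. Conceptually your argument makes visible that \ref{LambdaD+} is exactly the convolution inverse of \ref{Du1Lambda} (the reciprocal of $\sum_{n\ge 0}(n+1)x^n=(1-x)^{-2}$ being $1-2x+x^2$), whereas the paper's version re-derives the coefficients from the $p$-recursion; your route buys brevity and transparency at no cost in rigor. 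Two small points to make explicit in a final write-up: the induction statement must be quantified over all $k$ simultaneously, since the inductive hypothesis is invoked at the shifted parameters $n+k$; and, as you already note, the terms with $\Lambda^\Gamma_{j,l,s}$ for $s<0$ vanish by convention, so the upper limits $m=i+1,i+2$ created by the reindexing contribute nothing.
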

\begin{proof}
    We will proceed by induction on $i$. The cases $i=0,1$ are straightforward. Assume the claim for some $i\in\mathbb N$. Then we have
    \begin{align*}
        (i+1)\Lambda^\Gamma_{j,l,i+1}D_{k,1}^{\Gamma,+}(j,l)
        &=-\sum_{m=1}^{i+1}p_m^\Gamma(j,l)\Lambda^\Gamma_{j,l,i+1-m}D_{k,1}^{\Gamma,+}(j,l)\\
        &=-\sum_{m=1}^{i+1}p_m^\Gamma(j,l)\left(D_{k,1}^{\Gamma,+}(j,l)\Lambda^\Gamma_{j,l,i+1-m}-2D_{k+1,1}^{\Gamma,+}(j,l)\Lambda^\Gamma_{j,l,i-m}+D_{k+2,1}^{\Gamma,+}(j,l)\Lambda^\Gamma_{j,l,i-1-m}\right)\\
        &\textnormal{ by the induction hypothesis}\\
        &=-\sum_{m=1}^{i+1}p_m^\Gamma(j,l)D_{k,1}^{\Gamma,+}(j,l)\Lambda^\Gamma_{j,l,i+1-m}+2\sum_{m=1}^{i+1}p_m^\Gamma(j,l)D_{k+1,1}^{\Gamma,+}(j,l)\Lambda^\Gamma_{j,l,i-m}\\
        &-\sum_{m=1}^{i+1}p_m^\Gamma(j,l)D_{k+2,1}^{\Gamma,+}(j,l)\Lambda^\Gamma_{j,l,i-1-m}\\
        &=-D_{k,1}^{\Gamma,+}(j,l)\sum_{m=1}^{i+1}p_m^\Gamma(j,l)\Lambda^\Gamma_{j,l,i+1-m}-2\sum_{m=1}^{i+1}D_{k+m,1}^{\Gamma,+}(j,l)\Lambda^\Gamma_{j,l,i+1-m}\\
        &+2D_{k+1,1}^{\Gamma,+}(j,l)\sum_{m=1}^{i+1}p_m^\Gamma(j,l)\Lambda^\Gamma_{j,l,i-m}+4\sum_{m=1}^{i+1}D_{k+m+1,1}^{\Gamma,+}(j,l)\Lambda^\Gamma_{j,l,i-m}\\
        &-D_{k+2,1}^{\Gamma,+}(j,l)\sum_{m=1}^{i+1}p_m^\Gamma(j,l)\Lambda^\Gamma_{j,l,i-1-m}-2\sum_{m=1}^{i+1}D_{k+m+2,1}^{\Gamma,+}(j,l)\Lambda^\Gamma_{j,l,i-1-m}\\
        &\textnormal{ by Proposition }\ref{bracketpD}\\
        &=(i+1)D_{k,1}^{\Gamma,+}(j,l)\Lambda^\Gamma_{j,l,i+1}-2D_{k+1,1}^{\Gamma,+}(j,l)\Lambda^\Gamma_{j,l,i}-2D_{k+2,1}^{\Gamma,+}(j,l)\Lambda^\Gamma_{j,l,i-1}\\
        &-2\sum_{m=3}^{i+1}D_{k+m,1}^{\Gamma,+}(j,l)\Lambda^\Gamma_{j,l,i+1-m}-2iD_{k+1,1}^{\Gamma,+}(j,l)\Lambda^\Gamma_{j,l,i}+4\sum_{m=2}^{i+2}D_{k+m,1}^{\Gamma,+}(j,l)\Lambda^\Gamma_{j,l,i+1-m}\\
        &+(i-1)D_{k+2,1}^{\Gamma,+}(j,l)\Lambda^\Gamma_{j,l,i-1}-2\sum_{m=3}^{i+3}D_{k+m,1}^{\Gamma,+}(j,l)\Lambda^\Gamma_{j,l,i+1-m}\\
        &=(i+1)D_{k,1}^{\Gamma,+}(j,l)\Lambda^\Gamma_{j,l,i+1}-2\sum_{m=3}^{i+1}D_{k+m,1}^{\Gamma,+}(j,l)\Lambda^\Gamma_{j,l,i+1-m}-2(i+1)D_{k+1,1}^{\Gamma,+}(j,l)\Lambda^\Gamma_{j,l,i}\\
        &+4D_{k+2,1}^{\Gamma,+}(j,l)\Lambda^\Gamma_{j,l,i-1}+4\sum_{m=3}^{i+1}D_{k+m,1}^{\Gamma,+}(j,l)\Lambda^\Gamma_{j,l,i+1-m}+(i-3)D_{k+2,1}^{\Gamma,+}(j,l)\Lambda^\Gamma_{j,l,i-1}\\
        &-2\sum_{m=3}^{i+1}D_{k+m,1}^{\Gamma,+}(j,l)\Lambda^\Gamma_{j,l,i+1-m}\\
        &=(i+1)D_{k,1}^{\Gamma,+}(j,l)\Lambda^\Gamma_{j,l,i+1}-2(i+1)D_{k+1,1}^{\Gamma,+}(j,l)\Lambda^\Gamma_{j,l,i}+(i+1)D_{k+2,1}^{\Gamma,+}(j,l)\Lambda^\Gamma_{j,l,i-1}
    \end{align*}
\end{proof}

\begin{prop}\label{uD}
    For all $j,l\in\mathbb N$ and $u,v\in{\mathbb Z}_+$
    \begin{enumerate}
        \item
        \begin{equation*}
            uD_{u,v}^{\Gamma,\pm}(j,l)=\sum_{i=0}^uiD_{i,1}^{\Gamma,\pm}(j,l)D_{u-i,v-1}^{\Gamma,\pm}(j,l)
        \end{equation*}
        \item
        \begin{equation*}
            (u+v)D_{u,v}^{\Gamma,\pm}(j,l)=\sum_{i=0}^u(i+1)D_{i,1}^{\Gamma,\pm}(j,l)D_{u-i,v-1}^{\Gamma,\pm}(j,l)
        \end{equation*}
    \end{enumerate}
\end{prop}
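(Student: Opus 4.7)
The plan is to deduce both identities from the generating-function form of $D^{\Gamma,\pm}_{u,v}(j,l)$ provided by Proposition \ref{Duv}, using the Euler-type operator $w\tfrac{d}{dw}$ to produce the factor $u+v$. As a first step, observe that (1) and (2) are interchangeable once the defining recursion
\begin{equation*}
v\,D^{\Gamma,\pm}_{u,v}(j,l) = \sum_{i=0}^{u} D^{\Gamma,\pm}_{i,1}(j,l)\,D^{\Gamma,\pm}_{u-i,v-1}(j,l)
\end{equation*}
is added to (1) or subtracted from (2). Hence it suffices to prove (2).

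To prove (2), introduce the formal series $F^\pm(w) := \sum_{m \geq 0} D^{\Gamma,\pm}_{m,1}(j,l)\, w^{m+1}$, so that Proposition \ref{Duv} reads $D^{\Gamma,\pm}_{u,v}(j,l) = \bigl((F^\pm(w))^{(v)}\bigr)_{u+v}$, where $(v)$ denotes the divided $v$-th power and the subscript extracts the coefficient of $w^{u+v}$. Since each $D^{\Gamma,\pm}_{m,1}(j,l)$ is a ${\mathbb Z}$-linear combination of the $x^{\Gamma,\pm}_n$'s by Proposition \ref{Du1}, and the $x^{\Gamma,+}_n$'s (respectively $x^{\Gamma,-}_n$'s) pairwise commute inside $U(\mathcal O)$ (as $[x_+^\Gamma,x_+^\Gamma]=0=[x_-^\Gamma,x_-^\Gamma]$), all coefficients of $F^\pm$ commute and the usual Leibniz and chain-rule manipulations of formal power series are legitimate. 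Applying $w\tfrac{d}{dw}$ scales the $w^{u+v}$ coefficient by $u+v$, while on the function side it produces $(F^\pm)^{(v-1)} \cdot w\,\tfrac{dF^\pm}{dw}$. Since $w\tfrac{dF^\pm}{dw} = \sum_{i\geq 0}(i+1)D^{\Gamma,\pm}_{i,1}(j,l)\,w^{i+1}$ and $\bigl((F^\pm)^{(v-1)}\bigr)_{k+v-1} = D^{\Gamma,\pm}_{k,v-1}(j,l)$ by Proposition \ref{Duv}, multiplying the two series and extracting the $w^{u+v}$ coefficient yields exactly $\sum_{i=0}^u (i+1)\,D^{\Gamma,\pm}_{i,1}(j,l)\,D^{\Gamma,\pm}_{u-i,v-1}(j,l)$, which is (2). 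Subtracting the recursion then recovers (1).

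The main potential obstacle is justifying the formal calculus in the noncommutative algebra $U(\mathcal O)$; this collapses entirely to the commutativity of the $x^{\Gamma,\pm}_n$'s noted above, and once that is recorded every remaining step is a transparent coefficient extraction. An alternative to the generating-function route would be a direct induction on $v$ using only the recursion, but separating out the commutativity observation first makes the generating-function argument both shorter and more conceptual.
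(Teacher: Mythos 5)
Your proposal is correct, but it takes a genuinely different route from the paper. The paper proves (1) directly by induction on $v$: it splits $u = (u-i) + i$ inside the defining recursion, applies the induction hypothesis to the factor $(u-i)D^{\Gamma,\pm}_{u-i,v}$, and reindexes the resulting double sum; (2) is then obtained by adding $v$ times the defining recursion to (1). You instead prove (2) first via the power-series formula of Proposition \ref{Duv}, applying the Euler operator $w\tfrac{d}{dw}$ to $\bigl(F^\pm\bigr)^{(v)}$ and matching coefficients of $w^{u+v}$, and recover (1) by subtracting the recursion --- logically equivalent, since $(2)-(1)$ is exactly $v$ times the recursion. Both arguments secretly rest on the same fact, namely that the $D^{\Gamma,\pm}_{m,1}(j,l)$ for a fixed sign commute (being $\mathbb Z$-linear combinations of the mutually commuting $x^{\Gamma,\pm}_n$): the paper uses it silently when it interchanges $D^{\Gamma,\pm}_{i,1}$ past $kD^{\Gamma,\pm}_{k,1}$ in the double sum, while you state it explicitly to license the Leibniz rule $\tfrac{d}{dw}F^{(v)}=F^{(v-1)}F'$ and to reorder the factors in the extracted coefficient. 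Your route is shorter and more conceptual, at the cost of leaning on the second identity of Proposition \ref{Duv} (whose proof the paper only sketches via the Multinomial Theorem); the paper's induction is more elementary and uses only the recursive definition. One trivial caveat: the $w\tfrac{d}{dw}$ argument literally requires $v\geq 1$, so you should note (as the paper does) that the $v=0$ case is immediate because $D^{\Gamma,\pm}_{u,-1}(j,l)=0$ makes both sides vanish.
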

\begin{proof}
    We will proceed by induction on $v$. The cases $v=0,1$ are straightforward. Assume the formula for some $v\in{\mathbb Z}_+$. Then, for $(1)$, we have
    \begin{align*}
        uD_{u,v+1}^{\Gamma,\pm}(j,l)
        &=\frac{u}{v+1}\sum_{i=0}^uD_{i,1}^{\Gamma,\pm}(j,l)D_{u-i,v}^{\Gamma,\pm}(j,l)\\
        &=\frac{1}{v+1}\sum_{i=0}^uD_{i,1}^{\Gamma,\pm}(j,l)(u-i)D_{u-i,v}^{\Gamma,\pm}(j,l)+\frac{1}{v+1}\sum_{i=0}^uiD_{i,1}^{\Gamma,\pm}(j,l)D_{u-i,v}^{\Gamma,\pm}(j,l)\\
        &=\frac{1}{v+1}\sum_{i=0}^uD_{i,1}^{\Gamma,\pm}(j,l)\sum_{k=0}^{u-i}kD_{k,1}^{\Gamma,\pm}(j,l)D_{u-i-k,v-1}^{\Gamma,\pm}(j,l)+\frac{1}{v+1}\sum_{i=0}^uiD_{i,1}^{\Gamma,\pm}(j,l)D_{u-i,v}^{\Gamma,\pm}(j,l)\\
        &\textnormal{ by the induction hypothesis}\\
        &=\frac{1}{v+1}\sum_{k=0}^{u}kD_{k,1}^{\Gamma,\pm}(j,l)\sum_{i=0}^{u-k}D_{i,1}^{\Gamma,\pm}(j,l)D_{u-i-k,v-1}^{\Gamma,\pm}(j,l)+\frac{1}{v+1}\sum_{i=0}^uiD_{i,1}^{\Gamma,\pm}(j,l)D_{u-i,v}^{\Gamma,\pm}(j,l)\\
        &=\frac{v}{v+1}\sum_{k=0}^{u}kD_{k,1}^{\Gamma,\pm}(j,l)D_{u-k,v}^{\Gamma,\pm}(j,l)+\frac{1}{v+1}\sum_{i=0}^uiD_{i,1}^{\Gamma,\pm}(j,l)D_{u-i,v}^{\Gamma,\pm}(j,l)\\
        &=\sum_{i=0}^uiD_{i,1}^{\Gamma,\pm}(j,l)D_{u-i,v}^{\Gamma,\pm}(j,l).
    \end{align*}
    For $(2)$, we have
    \begin{align*}
        (u+v)D_{u,v}^{\Gamma,\pm}(j,l)
        &=uD_{u,v}^{\Gamma,\pm}(j,l)+vD_{u,v}^{\Gamma,\pm}(j,l)\\
        &=\sum_{i=0}^uiD_{i,1}^{\Gamma,\pm}(j,l)D_{u-i,v-1}^{\Gamma,\pm}(j,l)+\sum_{i=0}^uD_{i,1}^{\Gamma,\pm}(j,l)D_{u-i,v-1}^{\Gamma,\pm}(j,l)\textnormal{ by }(1)\\
        &=\sum_{i=0}^u(i+1)D_{i,1}^{\Gamma,\pm}(j,l)D_{u-i,v-1}^{\Gamma,\pm}(j,l)
    \end{align*}
\end{proof}

\begin{lem}\label{LambdaD+x-}
    For all $n,v\in{\mathbb Z}_+$ and $j,l\in\mathbb N$
    \begin{align*}
        \sum_{i=0}^n\Lambda^\Gamma_{j,l,i}D^{\Gamma,+}_{n-i,v}(j,l)\left(x^{\Gamma,-}_l\right)
        &=-(n+1)\sum_{u=0}^{n+1}\Lambda^\Gamma_{j,l,n+1-u}D^{\Gamma,+}_{u,v-1}(j,l)\\
        &+\sum_{m=0}^{n}\sum_{k=0}^{n-m}(m+1)D^{\Gamma,-}_{m,1}(j,l)\Lambda^\Gamma_{j,l,n-m-k}D^{\Gamma,+}_{k,v}(j,l)
    \end{align*}
\end{lem}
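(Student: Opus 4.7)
The plan is to prove this identity by induction on $v$, viewing $(x^{\Gamma,-}_l)$ as a commutation input that is pushed leftward through the $\Lambda^\Gamma\cdot D^{\Gamma,+}$ block using the propositions already established in this subsection.

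For the base case $v=0$: since $D^{\Gamma,+}_{u,0}(j,l)=\delta_{u,0}$ and $D^{\Gamma,+}_{u,-1}(j,l)=0$, the identity collapses to
\[
    \Lambda^\Gamma_{j,l,n}(x^{\Gamma,-}_l) \;=\; \sum_{m=0}^n (m+1)\,D^{\Gamma,-}_{m,1}(j,l)\,\Lambda^\Gamma_{j,l,n-m},
\]
which is the $s=1$, $k=j$, $m=l$ specialization of Proposition \ref{straightening}(9) (proven in the preceding subsection). This reduction uses the identification of the three-parameter $D^{\Gamma,-}_m(l,j,l)$ with the two-parameter $D^{\Gamma,-}_{m,1}(j,l)$ in $U(\mathcal O)$, which follows from Proposition \ref{Du1} together with $x^{\Gamma,-}_{-a}=-x^{\Gamma,-}_a$ and $x^{\Gamma,-}_0=0$; the same identification is implicit in the base case $u=0$ of the proof of Proposition \ref{bracketpD}.

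For the inductive step $v\geq 1$, assume the identity at level $v-1$. Multiplying the left-hand side by $v$ and applying the recursion $v\,D^{\Gamma,+}_{n-i,v}(j,l) = \sum_{k=0}^{n-i} D^{\Gamma,+}_{k,1}(j,l)\,D^{\Gamma,+}_{n-i-k,v-1}(j,l)$, rewrite
\[
    v\sum_{i=0}^n\Lambda^\Gamma_{j,l,i}\,D^{\Gamma,+}_{n-i,v}(j,l)(x^{\Gamma,-}_l) \;=\; \sum_{i=0}^n\sum_{k=0}^{n-i}\Lambda^\Gamma_{j,l,i}\,D^{\Gamma,+}_{k,1}(j,l)\,D^{\Gamma,+}_{n-i-k,v-1}(j,l)(x^{\Gamma,-}_l).
\]
Commute each $D^{\Gamma,+}_{k,1}$ past the preceding $\Lambda^\Gamma_{j,l,i}$ via Proposition \ref{LambdaD+}, apply the inductive hypothesis to push $(x^{\Gamma,-}_l)$ through each $\Lambda^\Gamma D^{\Gamma,+}_{\bullet,v-1}(x^{\Gamma,-}_l)$ block on the right, and then reassemble the result using Proposition \ref{Du1Lambda}, the commutator $[D^{\Gamma,+}_{u,1}(j,l),(x^{\Gamma,-}_l)]=p^\Gamma_{u+1}(j,l)$ (the $k=0$ case of Proposition \ref{pnewD}), and especially Proposition \ref{uD}(2), which recombines leftover $D^{\Gamma,+}_{k,1}D^{\Gamma,+}_{*,v-1}$ products into single $D^{\Gamma,+}_{*,v}$ factors via $(u+v)D^{\Gamma,+}_{u,v}=\sum_i (i+1)D^{\Gamma,+}_{i,1}D^{\Gamma,+}_{u-i,v-1}$.

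The principal obstacle will be the combinatorial bookkeeping in the inductive step: the coefficient $(n+1)$ in the first sum on the right-hand side must emerge only after combining the shifted contributions produced by Proposition \ref{LambdaD+} with those arising from absorbing $D^{\Gamma,+}_{k,1}$ factors back into $D^{\Gamma,+}_{*,v-1}$ via Proposition \ref{uD}(2). Matching coefficients across the resulting multiple sums will likely demand Chu--Vandermonde-type re-indexing, in the same spirit as the long manipulations in the proof of Proposition \ref{straightening}(8) above.
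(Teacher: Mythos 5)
Your proposal follows essentially the same route as the paper's proof: induction on $v$, with the base case reduced to the $s=1$ instance of Proposition \ref{straightening}(9), and the inductive step driven by the recursion $vD^{\Gamma,+}_{u,v}=\sum_k D^{\Gamma,+}_{k,1}D^{\Gamma,+}_{u-k,v-1}$, Proposition \ref{LambdaD+} to move the $D^{\Gamma,+}_{k,1}$ factors past the $\Lambda^\Gamma_{j,l,i}$, and Propositions \ref{Du1Lambda}, \ref{pnewD}, and \ref{uD} to reassemble the terms. The only difference is the cosmetic shift of proving the step from $v$ to $v+1$ rather than from $v-1$ to $v$; the bookkeeping you anticipate is exactly the long computation the paper carries out.
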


\begin{proof}
    We will proceed by induction on $v$. If $v=0$ we have
    \begin{equation*}
        \Lambda^\Gamma_{j,l,n}\left(x^{\Gamma,-}_l\right)=\sum_{m=0}^n(m+1)D^{\Gamma,-}_{m}(l,j,l)\Lambda^\Gamma_{j,l,n-m}.
    \end{equation*}
    Assume the result for some $v\in{\mathbb Z}_+$. Then we have
    \begin{align*}
        &(v+1)\sum_{i=0}^n\Lambda^\Gamma_{j,l,i}D^{\Gamma,+}_{n-i,v+1}(j,l)\left(x^{\Gamma,-}_l\right)
       {\Gamma,+}_{n-i-k,v}(j,l)\left(x^{\Gamma,-}_l\right)\\
        &=\sum_{i=0}^n\sum_{k=0}^{n-i}\Lambda^\Gamma_{j,l,i}D^{\Gamma,+}_{k,1}(j,l)D^{\Gamma,+}_{n-i-k,v}(j,l)\left(x^{\Gamma,-}_l\right)\\
        &=\sum_{k=0}^n\sum_{i=0}^{n-k}\Lambda^\Gamma_{j,l,i}D^{\Gamma,+}_{k,1}(j,l)D^{\Gamma,+}_{n-i-k,v}(j,l)\left(x^{\Gamma,-}_l\right)\\
        &=\sum_{k=0}^n\sum_{i=0}^{n-k}\Big(D^{\Gamma,+}_{k,1}(j,l)\Lambda^\Gamma_{j,l,i}-2D^{\Gamma,+}_{k+1,1}(j,l)\Lambda^\Gamma_{j,l,i-1}\\
        &+D^{\Gamma,+}_{k+2,1}(j,l)\Lambda^\Gamma_{j,l,i-2}\Big)D^{\Gamma,+}_{n-i-k,v}(j,l)\left(x^{\Gamma,-}_l\right)\\
        &\textnormal{ by Proposition }\ref{LambdaD+}\\
        &=\sum_{k=0}^{n}D^{\Gamma,+}_{k,1}(j,l)\sum_{i=0}^{n-k}\Lambda^\Gamma_{j,l,i}D^{\Gamma,+}_{n-i-k,v}(j,l)\left(x^{\Gamma,-}_l\right)\\
        &-2\sum_{k=0}^nD^{\Gamma,+}_{k+1,1}(j,l)\sum_{i=0}^{n-k}\Lambda^\Gamma_{j,l,i-1}D^{\Gamma,+}_{n-i-k,v}(j,l)\left(x^{\Gamma,-}_l\right)\\
        &+\sum_{k=0}^nD^{\Gamma,+}_{k+2,1}(j,l)\sum_{i=0}^{n-k}\Lambda^\Gamma_{j,l,i-2}D^{\Gamma,+}_{n-i-k,v}(j,l)\left(x^{\Gamma,-}_l\right)\\
        &=\sum_{k=0}^{n}D^{\Gamma,+}_{k,1}(j,l)\sum_{i=0}^{n-k}\Lambda^\Gamma_{j,l,i}D^{\Gamma,+}_{n-i-k,v}(j,l)\left(x^{\Gamma,-}_l\right)\\
        &-2\sum_{k=0}^nD^{\Gamma,+}_{k+1,1}(j,l)\sum_{i=0}^{n-k-1}\Lambda^\Gamma_{j,l,i}D^{\Gamma,+}_{n-i-k-1,v}(j,l)\left(x^{\Gamma,-}_l\right)\\
        &+\sum_{k=0}^nD^{\Gamma,+}_{k+2,1}(j,l)\sum_{i=0}^{n-k-2}\Lambda^\Gamma_{j,l,i}D^{\Gamma,+}_{n-i-k-2,v}(j,l)\left(x^{\Gamma,-}_l\right)\\
        &=\sum_{k=0}^{n}D^{\Gamma,+}_{k,1}(j,l)\Bigg(-(n-k+1)\sum_{u=0}^{n-k+1}\Lambda^\Gamma_{j,l,n-k+1-u}D^{\Gamma,+}_{u,v-1}(j,l)\\
        &+\sum_{m=0}^{n-k}\sum_{r=0}^{n-k-m}(m+1)D^{\Gamma,-}_{m,1}(j,l)\Lambda^\Gamma_{j,l,n-k-m-r}D^{\Gamma,+}_{r,v}(j,l)\Bigg)\\
        &-2\sum_{k=0}^nD^{\Gamma,+}_{k+1,1}(j,l)\Bigg(-(n-k)\sum_{u=0}^{n-k}\Lambda^\Gamma_{j,l,n-k-u}D^{\Gamma,+}_{u,v-1}(j,l)\\
        &+\sum_{m=0}^{n-k-1}\sum_{r=0}^{n-k-1-m}(m+1)D^{\Gamma,-}_{m,1}(j,l)\Lambda^\Gamma_{j,l,n-k-1-m-r}D^{\Gamma,+}_{r,v}(j,l)\Bigg)\\
        &+\sum_{k=0}^nD^{\Gamma,+}_{k+2,1}(j,l)\Bigg(-(n-k-1)\sum_{u=0}^{n-k-1}\Lambda^\Gamma_{j,l,n-k-1-u}D^{\Gamma,+}_{u,v-1}(j,l)\\
        &+\sum_{m=0}^{n-k-2}\sum_{r=0}^{n-k-2-m}(m+1)D^{\Gamma,-}_{m,1}(j,l)\Lambda^\Gamma_{j,l,n-k-2-m-r}D^{\Gamma,+}_{r,v}(j,l)\Bigg)\\
        &\textnormal{ by the induction hypothesis}\\
        &=-\sum_{k=0}^{n}D^{\Gamma,+}_{k,1}(j,l)(n-k+1)\sum_{u=0}^{n-k+1}\Lambda^\Gamma_{j,l,n-k+1-u}D^{\Gamma,+}_{u,v-1}(j,l)\\
        &+\sum_{k=0}^{n}D^{\Gamma,+}_{k,1}(j,l)\sum_{m=0}^{n-k}\sum_{r=0}^{n-k-m}(m+1)D^{\Gamma,-}_{m,1}(j,l)\Lambda^\Gamma_{j,l,n-k-m-r}D^{\Gamma,+}_{r,v}(j,l)\\
        &+2\sum_{k=0}^nD^{\Gamma,+}_{k+1,1}(j,l)(n-k)\sum_{u=0}^{n-k}\Lambda^\Gamma_{j,l,n-k-u}D^{\Gamma,+}_{u,v-1}(j,l)\\
        &-2\sum_{k=0}^nD^{\Gamma,+}_{k+1,1}(j,l)\sum_{m=0}^{n-k-1}\sum_{r=0}^{n-k-1-m}(m+1)D^{\Gamma,-}_{m,1}(j,l)\Lambda^\Gamma_{j,l,n-k-1-m-r}D^{\Gamma,+}_{r,v}(j,l)\\
        &-\sum_{k=0}^nD^{\Gamma,+}_{k+2,1}(j,l)(n-k-1)\sum_{u=0}^{n-k-1}\Lambda^\Gamma_{j,l,n-k-1-u}D^{\Gamma,+}_{u,v-1}(j,l)\\
        &+\sum_{k=0}^nD^{\Gamma,+}_{k+2,1}(j,l)\sum_{m=0}^{n-k-2}\sum_{r=0}^{n-k-2-m}(m+1)D^{\Gamma,-}_{m,1}(j,l)\Lambda^\Gamma_{j,l,n-k-2-m-r}D^{\Gamma,+}_{r,v}(j,l)\\
        &=-\sum_{k=0}^{n}D^{\Gamma,+}_{k,1}(j,l)(n-k+1)\sum_{u=0}^{n-k+1}\Lambda^\Gamma_{j,l,n-k+1-u}D^{\Gamma,+}_{u,v-1}(j,l)\\
        &+\sum_{k=0}^{n}D^{\Gamma,+}_{k,1}(j,l)\sum_{m=0}^{n-k}\sum_{r=0}^{n-k-m}(m+1)D^{\Gamma,-}_{m,1}(j,l)\Lambda^\Gamma_{j,l,n-k-m-r}D^{\Gamma,+}_{r,v}(j,l)\\
        &+2\sum_{k=1}^{n+1}D^{\Gamma,+}_{k,1}(j,l)(n-k+1)\sum_{u=0}^{n-k+1}\Lambda^\Gamma_{j,l,n-k-u+1}D^{\Gamma,+}_{u,v-1}(j,l)\\
        &-2\sum_{k=1}^{n+1}D^{\Gamma,+}_{k,1}(j,l)\sum_{m=0}^{n-k}\sum_{r=0}^{n-k-m}(m+1)D^{\Gamma,-}_{m,1}(j,l)\Lambda^\Gamma_{j,l,n-k-m-r}D^{\Gamma,+}_{r,v}(j,l)\\
        &-\sum_{k=2}^{n+2}D^{\Gamma,+}_{k,1}(j,l)(n-k+1)\sum_{u=0}^{n-k+1}\Lambda^\Gamma_{j,l,n-k+1-u}D^{\Gamma,+}_{u,v-1}(j,l)\\
        &+\sum_{k=2}^{n+2}D^{\Gamma,+}_{k,1}(j,l)\sum_{m=0}^{n-k}\sum_{r=0}^{n-k-m}(m+1)D^{\Gamma,-}_{m,1}(j,l)\Lambda^\Gamma_{j,l,n-k-m-r}D^{\Gamma,+}_{r,v}(j,l)\\
        &=-D^{\Gamma,+}_{0,1}(j,l)(n+1)\sum_{u=0}^{n+1}\Lambda^\Gamma_{j,l,n+1-u}D^{\Gamma,+}_{u,v-1}(j,l)-D^{\Gamma,+}_{1,1}(j,l)n\sum_{u=0}^{n}\Lambda^\Gamma_{j,l,n-u}D^{\Gamma,+}_{u,v-1}(j,l)\\
        &-\sum_{k=2}^{n}D^{\Gamma,+}_{k,1}(j,l)(n-k+1)\sum_{u=0}^{n-k+1}\Lambda^\Gamma_{j,l,n-k+1-u}D^{\Gamma,+}_{u,v-1}(j,l)\\
        &+D^{\Gamma,+}_{0,1}(j,l)\sum_{m=0}^{n}\sum_{r=0}^{n-m}(m+1)D^{\Gamma,-}_{m,1}(j,l)\Lambda^\Gamma_{j,l,n-m-r}D^{\Gamma,+}_{r,v}(j,l)\\
        &+D^{\Gamma,+}_{1,1}(j,l)\sum_{m=0}^{n-1}\sum_{r=0}^{n-1-m}(m+1)D^{\Gamma,-}_{m,1}(j,l)\Lambda^\Gamma_{j,l,n-1-m-r}D^{\Gamma,+}_{r,v}(j,l)\\
        &+\sum_{k=2}^{n}D^{\Gamma,+}_{k,1}(j,l)\sum_{m=0}^{n-k}\sum_{r=0}^{n-k-m}(m+1)D^{\Gamma,-}_{m,1}(j,l)\Lambda^\Gamma_{j,l,n-k-m-r}D^{\Gamma,+}_{r,v}(j,l)\\
        &+2D^{\Gamma,+}_{1,1}(j,l)n\sum_{u=0}^{n}\Lambda^\Gamma_{j,l,n-u}D^{\Gamma,+}_{u,v-1}(j,l)\\
        &+2\sum_{k=2}^{n}D^{\Gamma,+}_{k,1}(j,l)(n-k+1)\sum_{u=0}^{n-k+1}\Lambda^\Gamma_{j,l,n-k-u+1}D^{\Gamma,+}_{u,v-1}(j,l)\\
        &-2D^{\Gamma,+}_{1,1}(j,l)\sum_{m=0}^{n-1}\sum_{r=0}^{n-1-m}(m+1)D^{\Gamma,-}_{m,1}(j,l)\Lambda^\Gamma_{j,l,n-1-m-r}D^{\Gamma,+}_{r,v}(j,l)\\
        &-2\sum_{k=2}^{n}D^{\Gamma,+}_{k,1}(j,l)\sum_{m=0}^{n-k}\sum_{r=0}^{n-k-m}(m+1)D^{\Gamma,-}_{m,1}(j,l)\Lambda^\Gamma_{j,l,n-k-m-r}D^{\Gamma,+}_{r,v}(j,l)\\
        &-\sum_{k=2}^{n}D^{\Gamma,+}_{k,1}(j,l)(n-k+1)\sum_{u=0}^{n-k+1}\Lambda^\Gamma_{j,l,n-k+1-u}D^{\Gamma,+}_{u,v-1}(j,l)\\
        &+\sum_{k=2}^{n}D^{\Gamma,+}_{k,1}(j,l)\sum_{m=0}^{n-k}\sum_{r=0}^{n-k-m}(m+1)D^{\Gamma,-}_{m,1}(j,l)\Lambda^\Gamma_{j,l,n-k-m-r}D^{\Gamma,+}_{r,v}(j,l)\\
        &=-(n+1)D^{\Gamma,+}_{0,1}(j,l)\sum_{u=0}^{n+1}\Lambda^\Gamma_{j,l,n+1-u}D^{\Gamma,+}_{u,v-1}(j,l)+nD^{\Gamma,+}_{1,1}(j,l)\sum_{u=0}^{n}\Lambda^\Gamma_{j,l,n-u}D^{\Gamma,+}_{u,v-1}(j,l)\\
        &+\sum_{m=0}^{n}\sum_{r=0}^{n-m}(m+1)D^{\Gamma,+}_{0,1}(j,l)D^{\Gamma,-}_{m,1}(j,l)\Lambda^\Gamma_{j,l,n-m-r}D^{\Gamma,+}_{r,v}(j,l)\\
        &-\sum_{m=0}^{n-1}\sum_{r=0}^{n-1-m}(m+1)D^{\Gamma,+}_{1,1}(j,l)D^{\Gamma,-}_{m,1}(j,l)\Lambda^\Gamma_{j,l,n-1-m-r}D^{\Gamma,+}_{r,v}(j,l)\\
        &=-(n+1)\sum_{u=0}^{n+1}\sum_{i=0}^{n+1-u}(i+1)\Lambda^\Gamma_{j,l,n+1-u-i}D^{\Gamma,+}_{i,1}(j,l)D^{\Gamma,+}_{u,v-1}(j,l)\\
        &+n\sum_{u=0}^{n}\sum_{i=0}^{n-u}(i+1)\Lambda^\Gamma_{j,l,n-u-i}D^{\Gamma,+}_{i+1,1}(j,l)D^{\Gamma,+}_{u,v-1}(j,l)\\
        &+\sum_{m=0}^{n}\sum_{r=0}^{n-m}(m+1)D^{\Gamma,-}_{m,1}(j,l)D^{\Gamma,+}_{0,1}(j,l)\Lambda^\Gamma_{j,l,n-m-r}D^{\Gamma,+}_{r,v}(j,l)\\
        &+\sum_{m=0}^{n}\sum_{r=0}^{n-m}(m+1)p^\Gamma_{m+1}(j,l)\Lambda^\Gamma_{j,l,n-m-r}D^{\Gamma,+}_{r,v}(j,l)\\
        &-\sum_{m=0}^{n-1}\sum_{r=0}^{n-m-1}(m+1)D^{\Gamma,-}_{m,1}(j,l)D^{\Gamma,+}_{1,1}(j,l)\Lambda^\Gamma_{j,l,n-m-r-1}D^{\Gamma,+}_{r,v}(j,l)\\
        &-\sum_{m=0}^{n-1}\sum_{r=0}^{n-1-m}(m+1)p^\Gamma_{m+2}(j,l)\Lambda^\Gamma_{j,l,n-1-m-r}D^{\Gamma,+}_{r,v}(j,l)\textnormal{ by Propositions }\ref{Du1Lambda}\textnormal{ and }\ref{pnewD}\\
        &=-(n+1)\sum_{u=0}^{n+1}\sum_{i=0}^{n+1-u}(i+1)\Lambda^\Gamma_{j,l,n+1-u-i}D^{\Gamma,+}_{i,1}(j,l)D^{\Gamma,+}_{u,v-1}(j,l)\\
        &+n\sum_{u=0}^{n+1}\sum_{i=0}^{n-u+1}i\Lambda^\Gamma_{j,l,n-u-i+1}D^{\Gamma,+}_{i,1}(j,l)D^{\Gamma,+}_{u,v-1}(j,l)\\
        &+\sum_{m=0}^{n}\sum_{r=0}^{n-m}(m+1)D^{\Gamma,-}_{m,1}(j,l)D^{\Gamma,+}_{0,1}(j,l)\Lambda^\Gamma_{j,l,n-m-r}D^{\Gamma,+}_{r,v}(j,l)\\
        &+\sum_{m=0}^{n}\sum_{r=0}^{n-m}(m+1)p^\Gamma_{m+1}(j,l)\Lambda^\Gamma_{j,l,n-m-r}D^{\Gamma,+}_{r,v}(j,l)\\
        &-\sum_{m=0}^{n-1}\sum_{r=0}^{n-m-1}(m+1)D^{\Gamma,-}_{m,1}(j,l)D^{\Gamma,+}_{1,1}(j,l)\Lambda^\Gamma_{j,l,n-m-r-1}D^{\Gamma,+}_{r,v}(j,l)\\
        &-\sum_{m=0}^{n}\sum_{r=0}^{n-m}mp^\Gamma_{m+1}(j,l)\Lambda^\Gamma_{j,l,n-m-r}D^{\Gamma,+}_{r,v}(j,l)\\
        &=-\sum_{u=0}^{n+1}\sum_{i=0}^{n+1-u}(n+i+1)\Lambda^\Gamma_{j,l,n+1-u-i}D^{\Gamma,+}_{i,1}(j,l)D^{\Gamma,+}_{u,v-1}(j,l)\\
        &+\sum_{m=0}^{n}\sum_{r=0}^{n-m}(m+1)D^{\Gamma,-}_{m,1}(j,l)D^{\Gamma,+}_{0,1}(j,l)\Lambda^\Gamma_{j,l,n-m-r}D^{\Gamma,+}_{r,v}(j,l)+\sum_{m=0}^{n}\sum_{r=0}^{n-m}p^\Gamma_{m+1}(j,l)\Lambda^\Gamma_{j,l,n-m-r}D^{\Gamma,+}_{r,v}(j,l)\\
        &-\sum_{m=0}^{n-1}\sum_{r=0}^{n-m-1}(m+1)D^{\Gamma,-}_{m,1}(j,l)D^{\Gamma,+}_{1,1}(j,l)\Lambda^\Gamma_{j,l,n-m-r-1}D^{\Gamma,+}_{r,v}(j,l)\\
        &=-\sum_{i=0}^{n+1}\sum_{u=0}^{n+1-i}(n+i+1)\Lambda^\Gamma_{j,l,n+1-u-i}D^{\Gamma,+}_{i,1}(j,l)D^{\Gamma,+}_{u,v-1}(j,l)\\
        &+\sum_{m=0}^{n}\sum_{r=0}^{n-m}(m+1)D^{\Gamma,-}_{m,1}(j,l)D^{\Gamma,+}_{0,1}(j,l)\Lambda^\Gamma_{j,l,n-m-r}D^{\Gamma,+}_{r,v}(j,l)+\sum_{r=0}^{n}\sum_{m=0}^{n-r}p^\Gamma_{m+1}(j,l)\Lambda^\Gamma_{j,l,n-m-r}D^{\Gamma,+}_{r,v}(j,l)\\
        &-\sum_{m=0}^{n-1}\sum_{r=0}^{n-m-1}(m+1)D^{\Gamma,-}_{m,1}(j,l)D^{\Gamma,+}_{1,1}(j,l)\Lambda^\Gamma_{j,l,n-m-r-1}D^{\Gamma,+}_{r,v}(j,l)\\
        &=-\sum_{i=0}^{n+1}\sum_{u=i}^{n+1}(n+i+1)\Lambda^\Gamma_{j,l,n+1-u}D^{\Gamma,+}_{i,1}(j,l)D^{\Gamma,+}_{u-i,v-1}(j,l)\\
        &+\sum_{m=0}^{n}\sum_{r=0}^{n-m}(m+1)D^{\Gamma,-}_{m,1}(j,l)D^{\Gamma,+}_{0,1}(j,l)\Lambda^\Gamma_{j,l,n-m-r}D^{\Gamma,+}_{r,v}(j,l)+\sum_{r=0}^{n}\sum_{m=1}^{n-r+1}p^\Gamma_{m}(j,l)\Lambda^\Gamma_{j,l,n-m-r+1}D^{\Gamma,+}_{r,v}(j,l)\\
        &-\sum_{m=0}^{n-1}\sum_{r=0}^{n-m-1}(m+1)D^{\Gamma,-}_{m,1}(j,l)D^{\Gamma,+}_{1,1}(j,l)\Lambda^\Gamma_{j,l,n-m-r-1}D^{\Gamma,+}_{r,v}(j,l)\\
        &=-nv\sum_{u=0}^{n+1}\Lambda^\Gamma_{j,l,n+1-u}D^{\Gamma,+}_{u,v}(j,l)-\sum_{u=0}^{n+1}(u+v)\Lambda^\Gamma_{j,l,n+1-u}D^{\Gamma,+}_{u,v}(j,l)\\
        &+\sum_{m=0}^{n}\sum_{r=0}^{n-m}(m+1)D^{\Gamma,-}_{m,1}(j,l)D^{\Gamma,+}_{0,1}(j,l)\Lambda^\Gamma_{j,l,n-m-r}D^{\Gamma,+}_{r,v}(j,l)-\sum_{r=0}^{n+1}(n-r+1)\Lambda^\Gamma_{j,l,n-r+1}D^{\Gamma,+}_{r,v}(j,l)\\
        &-\sum_{m=0}^{n-1}\sum_{r=0}^{n-m-1}(m+1)D^{\Gamma,-}_{m,1}(j,l)D^{\Gamma,+}_{1,1}(j,l)\Lambda^\Gamma_{j,l,n-m-r-1}D^{\Gamma,+}_{r,v}(j,l)\\
        &=-(v+1)(n+1)\sum_{u=0}^{n+1}\Lambda^\Gamma_{j,l,n+1-u}D^{\Gamma,+}_{u,v}(j,l)+\sum_{m=0}^{n}\sum_{r=0}^{n-m}(m+1)D^{\Gamma,-}_{m,1}(j,l)D^{\Gamma,+}_{0,1}(j,l)\Lambda^\Gamma_{j,l,n-m-r}D^{\Gamma,+}_{r,v}(j,l)\\
        &-\sum_{m=0}^{n-1}\sum_{r=0}^{n-m-1}(m+1)D^{\Gamma,-}_{m,1}(j,l)D^{\Gamma,+}_{1,1}(j,l)\Lambda^\Gamma_{j,l,n-m-r-1}D^{\Gamma,+}_{r,v}(j,l)\\
        &=-(v+1)(n+1)\sum_{u=0}^{n+1}\Lambda^\Gamma_{j,l,n+1-u}D^{\Gamma,+}_{u,v}(j,l)+\sum_{m=0}^{n}\sum_{r=0}^{n-m}(m+1)D^{\Gamma,-}_{m,1}(j,l)D^{\Gamma,+}_{0,1}(j,l)\Lambda^\Gamma_{j,l,n-m-r}D^{\Gamma,+}_{r,v}(j,l)\\
        &-\sum_{m=0}^{n}\sum_{r=1}^{n-m}(m+1)D^{\Gamma,-}_{m,1}(j,l)D^{\Gamma,+}_{1,1}(j,l)\Lambda^\Gamma_{j,l,n-m-r}D^{\Gamma,+}_{r-1,v}(j,l)\\
        &=-(v+1)(n+1)\sum_{u=0}^{n+1}\Lambda^\Gamma_{j,l,n+1-u}D^{\Gamma,+}_{u,v}(j,l)+\sum_{m=0}^{n}(m+1)D^{\Gamma,-}_{m,1}(j,l)D^{\Gamma,+}_{0,1}(j,l)\Lambda^\Gamma_{j,l,n-m}D^{\Gamma,+}_{0,v}(j,l)\\
        &+\sum_{m=0}^{n}\sum_{k=0}^{1}(m+1)D^{\Gamma,-}_{m,1}(j,l)D^{\Gamma,+}_{k,1}(j,l)\Lambda^\Gamma_{j,l,n-m-1}D^{\Gamma,+}_{1-k,v}(j,l)\\
        &+\sum_{m=0}^{n}\sum_{r=2}^{n-m}(m+1)D^{\Gamma,-}_{m,1}(j,l)D^{\Gamma,+}_{0,1}(j,l)\Lambda^\Gamma_{j,l,n-m-r}D^{\Gamma,+}_{r,v}(j,l)\\
        &+\sum_{m=0}^{n}\sum_{r=2}^{n-m}(m+1)D^{\Gamma,-}_{m,1}(j,l)D^{\Gamma,+}_{1,1}(j,l)\Lambda^\Gamma_{j,l,n-m-r}D^{\Gamma,+}_{r-1,v}(j,l)\\
        &+\sum_{m=0}^{n}\sum_{r=2}^{n-m}\sum_{k=2}^{r}(m+1)D^{\Gamma,-}_{m,1}(j,l)D^{\Gamma,+}_{k,1}(j,l)\Lambda^\Gamma_{j,l,n-m-r}D^{\Gamma,+}_{r-k,v}(j,l)\\
        &-2\sum_{m=0}^{n}(m+1)D^{\Gamma,-}_{m,1}(j,l)D^{\Gamma,+}_{1,1}(j,l)\Lambda^\Gamma_{j,l,n-m-1}D^{\Gamma,+}_{0,v}(j,l)\\
        &-2\sum_{m=0}^{n}\sum_{r=2}^{n-m}(m+1)D^{\Gamma,-}_{m,1}(j,l)D^{\Gamma,+}_{1,1}(j,l)\Lambda^\Gamma_{j,l,n-m-r}D^{\Gamma,+}_{r-1,v}(j,l)\\
        &-2\sum_{m=0}^{n}\sum_{r=2}^{n-m}\sum_{k=2}^{r}(m+1)D^{\Gamma,-}_{m,1}(j,l)D^{\Gamma,+}_{k,1}(j,l)\Lambda^\Gamma_{j,l,n-m-r}D^{\Gamma,+}_{r-k,v}(j,l)\\
        &+\sum_{m=0}^{n}\sum_{r=2}^{n-m}\sum_{k=2}^{r}(m+1)D^{\Gamma,-}_{m,1}(j,l)D^{\Gamma,+}_{k,1}(j,l)\Lambda^\Gamma_{j,l,n-m-r}D^{\Gamma,+}_{r-k,v}(j,l)\\
        &=-(v+1)(n+1)\sum_{u=0}^{n+1}\Lambda^\Gamma_{j,l,n+1-u}D^{\Gamma,+}_{u,v}(j,l)\\
        &+\sum_{m=0}^{n}\sum_{r=0}^{n-m}\sum_{k=0}^{r}(m+1)D^{\Gamma,-}_{m,1}(j,l)D^{\Gamma,+}_{k,1}(j,l)\Lambda^\Gamma_{j,l,n-m-r}D^{\Gamma,+}_{r-k,v}(j,l)\\
        &-2\sum_{m=0}^{n}\sum_{r=1}^{n-m}\sum_{k=1}^{r}(m+1)D^{\Gamma,-}_{m,1}(j,l)D^{\Gamma,+}_{k,1}(j,l)\Lambda^\Gamma_{j,l,n-m-r}D^{\Gamma,+}_{r-k,v}(j,l)\\
        &+\sum_{m=0}^{n}\sum_{r=2}^{n-m}\sum_{k=2}^{r}(m+1)D^{\Gamma,-}_{m,1}(j,l)D^{\Gamma,+}_{k,1}(j,l)\Lambda^\Gamma_{j,l,n-m-r}D^{\Gamma,+}_{r-k,v}(j,l)\\
        &=-(v+1)(n+1)\sum_{u=0}^{n+1}\Lambda^\Gamma_{j,l,n+1-u}D^{\Gamma,+}_{u,v}(j,l)\\
        &+\sum_{m=0}^{n}\sum_{r=0}^{n-m}\sum_{k=0}^{r}(m+1)D^{\Gamma,-}_{m,1}(j,l)D^{\Gamma,+}_{k,1}(j,l)\Lambda^\Gamma_{j,l,n-m-r}D^{\Gamma,+}_{r-k,v}(j,l)\\
        &-2\sum_{m=0}^{n}\sum_{r=0}^{n-m}\sum_{k=0}^{r}(m+1)D^{\Gamma,-}_{m,1}(j,l)D^{\Gamma,+}_{k+1,1}(j,l)\Lambda^\Gamma_{j,l,n-m-r-1}D^{\Gamma,+}_{r-k,v}(j,l)\\
        &+\sum_{m=0}^{n}\sum_{r=0}^{n-m}\sum_{k=0}^{r}(m+1)D^{\Gamma,-}_{m,1}(j,l)D^{\Gamma,+}_{k+2,1}(j,l)\Lambda^\Gamma_{j,l,n-m-r-2}D^{\Gamma,+}_{r-k,v}(j,l)\\
        &=-(v+1)(n+1)\sum_{u=0}^{n+1}\Lambda^\Gamma_{j,l,n+1-u}D^{\Gamma,+}_{u,v}(j,l)\\
        &+\sum_{m=0}^{n}\sum_{r=0}^{n-m}\sum_{k=0}^{r}(m+1)D^{\Gamma,-}_{m,1}(j,l)\Big(D^{\Gamma,+}_{k,1}(j,l)\Lambda^\Gamma_{j,l,n-m-r}\\
        &-2D^{\Gamma,+}_{k+1,1}(j,l)\Lambda^\Gamma_{j,l,n-m-r-1}+D^{\Gamma,+}_{k+2,1}(j,l)\Lambda^\Gamma_{j,l,n-m-r-2}\Big)D^{\Gamma,+}_{r-k,v}(j,l)\\
        &=-(v+1)(n+1)\sum_{u=0}^{n+1}\Lambda^\Gamma_{j,l,n+1-u}D^{\Gamma,+}_{u,v}(j,l)+\sum_{m=0}^{n}\sum_{r=0}^{n-m}\sum_{k=0}^{r}(m+1)D^{\Gamma,-}_{m,1}(j,l)\Lambda^\Gamma_{j,l,n-m-r}D^{\Gamma,+}_{k,1}(j,l)D^{\Gamma,+}_{r-k,v}(j,l)\\
        &\textnormal{ by Proposition }\ref{LambdaD+}\\
        &=-(v+1)(n+1)\sum_{u=0}^{n+1}\Lambda^\Gamma_{j,l,n+1-u}D^{\Gamma,+}_{u,v}(j,l)+(v+1)\sum_{m=0}^{n}\sum_{r=0}^{n-m}(m+1)D^{\Gamma,-}_{m,1}(j,l)\Lambda^\Gamma_{j,l,n-m-r}D^{\Gamma,+}_{r,v+1}(j,l)
    \end{align*}
\end{proof}
  
Now, we prove Proposition \ref{straightening} (7):
\begin{proof}
    We proceed by induction on $s$. The case $s=0$ holds, because $D_{0,r}^{\Gamma,+}(j,l)=\left(x_j^{\Gamma,+}\right)^{(r)}$. Assume the formula is true for some $s\in{\mathbb Z}_+$. Then we have
    $$(s+1)\left(x_j^{\Gamma,+}\right)^{(r)}\left(x_l^{\Gamma,-}\right)^{(s+1)}$$
    \begin{align*}
        &=\left(x_j^{\Gamma,+}\right)^{(r)}\left(x_l^{\Gamma,-}\right)^{(s)}\left(x_l^{\Gamma,-}\right)\\
        &=\sum_{\substack{m,n,q\in{\mathbb Z}_+\\m+n+q\leq\min\{r,s\}}}(-1)^{m+n+q}D^{\Gamma,-}_{m,s-m-n-q}(j,l)\Lambda^\Gamma_{j,l,n}D^{\Gamma,+}_{q,r-m-n-q}(j,l)\left(x_l^{\Gamma,-}\right)\\
        &\textnormal{by the induction hypothesis}\\
        &=\sum_{\substack{m,i,q\in{\mathbb Z}_+\\m+i+q\leq\min\{r,s\}}}(-1)^{m+i+q}D^{\Gamma,-}_{m,s-m-i-q}(j,l)\Lambda^\Gamma_{j,l,i}D^{\Gamma,+}_{q,r-m-i-q}(j,l)\left(x_l^{\Gamma,-}\right)\\
        &=\sum_{\substack{m,i,n\in{\mathbb Z}_+\\m+n\leq\min\{r,s\}}}(-1)^{m+n}D^{\Gamma,-}_{m,s-m-n}(j,l)\Lambda^\Gamma_{j,l,i}D^{\Gamma,+}_{n-i,r-m-n}(j,l)\left(x_l^{\Gamma,-}\right)\\
        &=\sum_{\substack{m,n\in{\mathbb Z}_+\\m+n\leq\min\{r,s\}}}(-1)^{m+n}D^{\Gamma,-}_{m,s-m-n}(j,l)\sum_{i=0}^n\Lambda^\Gamma_{j,l,i}D^{\Gamma,+}_{n-i,r-m-n}(j,l)\left(x_l^{\Gamma,-}\right)\\
        &=\sum_{\substack{m,n\in{\mathbb Z}_+\\m+n\leq\min\{r,s\}}}(-1)^{m+n}D^{\Gamma,-}_{m,s-m-n}(j,l)\bigg(-(n+1)\sum_{u=0}^{n+1}\Lambda^\Gamma_{j,l,n+1-u}D^{\Gamma,+}_{u,r-m-n-1}(j,l)\\
        &+\sum_{i=0}^{n}\sum_{k=0}^{n-i}(i+1)D^{\Gamma,-}_{i,1}(j,l)\Lambda^\Gamma_{j,l,n-i-k}D^{\Gamma,+}_{k,r-m-n}(j,l)\bigg)\textnormal{ by Lemma }\ref{LambdaD+x-}\\
        &=\sum_{\substack{m,n\in{\mathbb Z}_+\\m+n\leq\min\{r,s\}}}(-1)^{m+n+1}D^{\Gamma,-}_{m,s-m-n}(j,l)(n+1)\sum_{u=0}^{n+1}\Lambda^\Gamma_{j,l,n+1-u}D^{\Gamma,+}_{u,r-m-n-1}(j,l)\\
        &+\sum_{\substack{m,n\in{\mathbb Z}_+\\m+n\leq\min\{r,s\}}}(-1)^{m+n}\sum_{i=0}^{n}(i+1)D^{\Gamma,-}_{i,1}(j,l)D^{\Gamma,-}_{m,s-m-n}(j,l)\sum_{k=0}^{n-i}\Lambda^\Gamma_{j,l,n-i-k}D^{\Gamma,+}_{k,r-m-n}(j,l)\\
        &=\sum_{\substack{m,n\in{\mathbb Z}_+\\m+n\leq\min\{r,s\}}}(-1)^{m+n+1}D^{\Gamma,-}_{m,s-m-n}(j,l)(n+1)\sum_{u=0}^{n+1}\Lambda^\Gamma_{j,l,u}D^{\Gamma,+}_{n+1-u,r-m-n-1}(j,l)\\
        &+\sum_{\substack{m,n\in{\mathbb Z}_+\\m+n\leq\min\{r,s\}}}(-1)^{m+n}\sum_{i=0}^{n}(i+1)D^{\Gamma,-}_{i,1}(j,l)D^{\Gamma,-}_{m,s-m-n}(j,l)\sum_{k=0}^{n-i}\Lambda^\Gamma_{j,l,k}D^{\Gamma,+}_{n-i-k,r-m-n}(j,l)\\
        &=\sum_{\substack{m,n,u\in{\mathbb Z}_+\\m+n\leq\min\{r,s\}}}(-1)^{m+n+1}(n+1)D^{\Gamma,-}_{m,s-m-n}(j,l)\Lambda^\Gamma_{j,l,u}D^{\Gamma,+}_{n+1-u,r-m-n-1}(j,l)\\
        &+\sum_{\substack{m,n\in{\mathbb Z}_+\\m+n\leq\min\{r,s\}}}(-1)^{m+n}\sum_{k=0}^{n}\sum_{i=0}^{n-k}(i+1)D^{\Gamma,-}_{i,1}(j,l)D^{\Gamma,-}_{m,s-m-n}(j,l)\Lambda^\Gamma_{j,l,k}D^{\Gamma,+}_{n-i-k,r-m-n}(j,l)\\
        &=\sum_{\substack{m,u,q\in{\mathbb Z}_+\\m+q+u\leq\min\{r,s\}}}(-1)^{m+q+u+1}(q+u+1)D^{\Gamma,-}_{m,s-m-q-u}(j,l)\Lambda^\Gamma_{j,l,u}D^{\Gamma,+}_{q+1,r-m-q-u-1}(j,l)\\
        &+\sum_{\substack{m,q,k\in{\mathbb Z}_+\\m+q+k\leq\min\{r,s\}}}(-1)^{m+q+k}\sum_{i=0}^{q}(i+1)D^{\Gamma,-}_{i,1}(j,l)D^{\Gamma,-}_{m,s-m-q-k}(j,l)\Lambda^\Gamma_{j,l,k}D^{\Gamma,+}_{q-i,r-m-q-k}(j,l)\\
        &=\sum_{\substack{m,u,q\in{\mathbb Z}_+\\1\leq m+q+u\leq\min\{r,s\}+1}}(-1)^{m+q+u}(q+u)D^{\Gamma,-}_{m,s+1-m-q-u}(j,l)\Lambda^\Gamma_{j,l,u}D^{\Gamma,+}_{q,r-m-q-u}(j,l)\\
        &+\sum_{i=0}^{\min\{r,s\}}\sum_{\substack{m,q,k\in{\mathbb Z}_+\\m+q+k\leq\min\{r,s\}}}(-1)^{m+q+k}(i+1)D^{\Gamma,-}_{i,1}(j,l)D^{\Gamma,-}_{m,s-m-q-k}(j,l)\Lambda^\Gamma_{j,l,k}D^{\Gamma,+}_{q-i,r-m-q-k}(j,l)\\
        &=\sum_{\substack{m,u,q\in{\mathbb Z}_+\\1\leq m+q+u\leq\min\{r,s+1\}}}(-1)^{m+q+u}(q+u)D^{\Gamma,-}_{m,s+1-m-q-u}(j,l)\Lambda^\Gamma_{j,l,u}D^{\Gamma,+}_{q,r-m-q-u}(j,l)\\
        &+\sum_{i=0}^{\min\{r,s\}}\sum_{\substack{m,q,k\in{\mathbb Z}_+\\m+q+k\leq\min\{r,s\}}}(-1)^{m+q+k}(i+1)D^{\Gamma,-}_{i,1}(j,l)D^{\Gamma,-}_{m-i,s-m-q-k}(j,l)\Lambda^\Gamma_{j,l,k}D^{\Gamma,+}_{q,r-m-q-k}(j,l)\\
        &=\sum_{\substack{m,u,q\in{\mathbb Z}_+\\1\leq m+q+u\leq\min\{r,s+1\}}}(-1)^{m+q+u}(q+u)D^{\Gamma,-}_{m,s+1-m-q-u}(j,l)\Lambda^\Gamma_{j,l,u}D^{\Gamma,+}_{q,r-m-q-u}(j,l)\\
        &+\sum_{\substack{m,q,k\in{\mathbb Z}_+\\m+q+k\leq\min\{r,s\}}}(-1)^{m+q+k}\sum_{i=0}^{m}(i+1)D^{\Gamma,-}_{i,1}(j,l)D^{\Gamma,-}_{m-i,s-m-q-k}(j,l)\Lambda^\Gamma_{j,l,k}D^{\Gamma,+}_{q,r-m-q-k}(j,l)\\
        &=\sum_{\substack{m,u,q\in{\mathbb Z}_+\\1\leq m+q+u\leq\min\{r,s+1\}}}(-1)^{m+q+u}(q+u)D^{\Gamma,-}_{m,s+1-m-q-u}(j,l)\Lambda^\Gamma_{j,l,u}D^{\Gamma,+}_{q,r-m-q-u}(j,l)\\
        &+\sum_{\substack{m,q,k\in{\mathbb Z}_+\\m+q+k\leq\min\{r,s\}}}(-1)^{m+q+k}(s+1-q-k)D^{\Gamma,-}_{m,s+1-m-q-k}(j,l)\Lambda^\Gamma_{j,l,k}D^{\Gamma,+}_{q,r-m-q-k}(j,l)\\
        &\textnormal{ by Proposition }\ref{uD}(ii)\\
        &=\sum_{\substack{m,n,q\in{\mathbb Z}_+\\m+n+q\leq\min\{r,s+1\}}}(-1)^{m+n+q}(q+n)D^{\Gamma,-}_{m,s+1-m-n-q}(j,l)\Lambda^\Gamma_{j,l,n}D^{\Gamma,+}_{q,r-m-n-q}(j,l)\\
        &+\sum_{\substack{m,n,q\in{\mathbb Z}_+\\m+n+q\leq\min\{r,s+1\}}}(-1)^{m+n+q}(s+1-q-n)D^{\Gamma,-}_{m,s+1-m-n-q}(j,l)\Lambda^\Gamma_{j,l,n}D^{\Gamma,+}_{q,r-m-n-q}(j,l)\\
        &=(s+1)\sum_{\substack{m,n,q\in{\mathbb Z}_+\\m+n+q\leq\min\{r,s+1\}}}(-1)^{m+n+q}D^{\Gamma,-}_{m,s+1-m-n-q}(j,l)\Lambda^\Gamma_{j,l,n}D^{\Gamma,+}_{q,r-m-n-q}(j,l)
    \end{align*}
\end{proof}

\details{
\section*{Further details of hidden proofs}

\subsection*{Proof of Proposition \ref{Duv}}

Recall that we are trying to show that for all $j,l,u,v\in\mathbb N$
\begin{align*}
    D^{\Gamma,\pm}_{u,v}(j,l)
    &=\sum_{\substack{k_0,\ldots,k_u\in{\mathbb Z}_+\\k_0+\dots k_u=v\\k_1+2k_2+\dots+uk_u=u}}\left(D^{\Gamma,\pm}_{0,1}(j,l)\right)^{(k_0)}\dots\left(D^{\Gamma,\pm}_{u,1}(j,l)\right)^{(k_u)}\\
    &=\left(\left(\sum_{m\geq0}D^{\Gamma,\pm}_{m,1}(j,l)w^{m+1}\right)^{(v)}\right)_{u+v}
\end{align*}
\begin{proof} 
    We will prove the first equation by induction on $v$. The second equation then follows by the Multinomial Theorem. If $v=1$ the terms of the sum on the right hand side are only nonzero when $k_u=1$ and all other $k_i=0$. In this case the right hand side equals $D^{\Gamma,\pm}_{u,1}(j,l)$. Assume that the formula holds for some $v\in\mathbb N$. Then
    \begin{align*}
        D^{\Gamma,\pm}_{u,v+1}(j,l)
        &=\frac{1}{v+1}\sum_{i=0}^u D^{\Gamma,\pm}_{i,1}(j,l)D^{\Gamma,\pm}_{u-i,v}(j,l)\\
        &=\frac{1}{v+1}\sum_{i=0}^{u-1}D^{\Gamma,\pm}_{i,1}(j,l)D^{\Gamma,\pm}_{u-i,v}(j,l)+\frac{1}{v+1}D^{\Gamma,\pm}_{u,1}(j,l)D^{\Gamma,\pm}_{0,v}(j,l)\\
        &=\frac{1}{v+1}\sum_{i=0}^{u-1}D^{\Gamma,\pm}_{i,1}(j,l)\sum_{\substack{k_0,\ldots,k_{u-i}\in{\mathbb Z}_+\\k_0+\dots+k_{u-i}=v\\k_1+2k_2+\dots+(u-i)k_{u-i}=u-i}}\left(D^{\Gamma,\pm}_{0,1}(j,l)\right)^{(k_0)}\dots\left(D^{\Gamma,\pm}_{u-i,1}(j,l)\right)^{(k_{u-i})}\\
        &+\frac{1}{v+1}D^{\Gamma,\pm}_{u,1}(j,l)D^{\Gamma,\pm}_{0,v}(j,l)\textnormal{ by the induction hypothesis}\\
        &=\frac{1}{v+1}\sum_{i=0}^{u-1}\sum_{\substack{k_0,\ldots,k_{u-i}\in{\mathbb Z}_+\\k_0+\dots+k_{u-i}=v\\k_1+2k_2+\dots+(u-i)k_{u-i}=u-i}}(k_i+1)\left(D^{\Gamma,\pm}_{0,1}(j,l)\right)^{(k_0)}\dots\left(D^{\Gamma,\pm}_{i,1}(j,l)\right)^{(k_i+1)}\dots\\
        &\cdots\left(D^{\Gamma,\pm}_{u-i,1}(j,l)\right)^{(k_{u-i})}+\frac{1}{v+1}D^{\Gamma,\pm}_{u,1}(j,l)D^{\Gamma,\pm}_{0,v}(j,l)\\
        &=\frac{1}{v+1}\sum_{i=0}^{u-1}\sum_{\substack{k_0,\ldots,k_{u-i}\in{\mathbb Z}_+\\k_0+\dots+k_{u-i}=v+1\\k_1+2k_2+\dots+(u-i)k_{u-i}=u}}k_i\left(D^{\Gamma,\pm}_{0,1}(j,l)\right)^{(k_0)}\dots\left(D^{\Gamma,\pm}_{u-i,1}(j,l)\right)^{(k_{u-i})}\\
        &+\frac{1}{v+1}\left(D^{\Gamma,\pm}_{0,1}(j,l)\right)^{(v)}D^{\Gamma,\pm}_{u,1}(j,l)\\
        &=\frac{1}{v+1}\sum_{i=0}^{u}\sum_{\substack{k_0,\ldots,k_{u}\in{\mathbb Z}_+\\k_0+\dots+k_{u}=v+1\\k_1+2k_2+\dots+uk_{u}=u}}k_i\left(D^{\Gamma,\pm}_{0,1}(j,l)\right)^{(k_0)}\dots\left(D^{\Gamma,\pm}_{u,1}(j,l)\right)^{(k_{u})}\\
        &=\sum_{\substack{k_0,\ldots,k_{u}\in{\mathbb Z}_+\\k_0+\dots+k_{u}=v+1\\k_1+2k_2+\dots+uk_{u}=u}}\left(D^{\Gamma,\pm}_{0,1}(j,l)\right)^{(k_0)}\dots\left(D^{\Gamma,\pm}_{u,1}(j,l)\right)^{(k_{u})}\\
    \end{align*}
    \end{proof} 

\subsection*{Proof of Proposition \ref{Du1}\eqref{D+u1}}
Recall that the statement we wish to prove is
for all $u\in{\mathbb Z}_+$ and $j,l\in\mathbb N$
    \begin{align*}
        D^{\Gamma,+}_{u,1}(j,l)
        &=\sum_{k=0}^{\left\lfloor\frac{u-1}{2}\right\rfloor}\sum_{i=0}^{u+1}(-1)^{k+i}\binom{u}{k}\binom{u+1}{i}\left(x_{(u+1-2i)j+(u-2k)l}^{\Gamma,+}\right)\nonumber\\
        &+((u+1)\mod2)\sum_{i=0}^{\frac{u}{2}}(-1)^{\frac{u}{2}+i}\binom{u}{\frac{u}{2}}\binom{u+1}{i}\left(x_{(u+1-2i)j}^{\Gamma,+}\right)
    \end{align*}
    We proceed by induction on $u$. The base case $u=0$ is clear. 
      \begin{align*}
        D^{\Gamma,+}_{u+1,1}(j,l)
        &=\frac{1}{2}\left[D^{\Gamma,+}_{u,1}(j,l),\Lambda_{j,l,1}^\Gamma\right]\\
        &=\frac{1}{2}\sum_{k=0}^{\left\lfloor\frac{u-1}{2}\right\rfloor}\sum_{i=0}^{u+1}(-1)^{k+i}\binom{u}{k}\binom{u+1}{i}\left[\left(x_{(u+1-2i)j+(u-2k)l}^{\Gamma,+}\right),\Lambda_{j,l,1}^\Gamma\right]\\
        &+\frac{1}{2}((u+1)\mod2)\sum_{i=0}^{\frac{u}{2}}(-1)^{\frac{u}{2}+i}\binom{u}{\frac{u}{2}}\binom{u+1}{i}\left[\left(x_{(u+1-2i)j}^{\Gamma,+}\right),\Lambda_{j,l,1}^\Gamma\right]\\
        &\textnormal{by the induction hypothesis}\\
        &=\sum_{k=0}^{\left\lfloor\frac{u-1}{2}\right\rfloor}\sum_{i=0}^{u+1}(-1)^{k+i}\binom{u}{k}\binom{u+1}{i}\bigg(\left(x_{(u+2-2i)j+(u+1-2k)l}^{\Gamma,+}\right)+\left(x_{(u-2i)j+(u-1-2k)l}^{\Gamma,+}\right)\\
        &-\left(x_{(u+2-2i)j+(u-1-2k)l}^{\Gamma,+}\right)-\left(x_{(u-2i)j+(u+1-2k)l}^{\Gamma,+}\right)\bigg)\\
        &+((u+1)\mod2)\sum_{i=0}^{\frac{u}{2}}(-1)^{\frac{u}{2}+i}\binom{u}{\frac{u}{2}}\binom{u+1}{i}\bigg(\left(x_{(u+2-2i)j+l}^{\Gamma,+}\right)-\left(x_{-(u-2i)j+l}^{\Gamma,+}\right)+\left(x_{-(u+2-2i)j+l}^{\Gamma,+}\right)\\
        & -\left(x_{(u-2i)j+l}^{\Gamma,+}\right)\bigg)\textnormal{ by }\eqref{xk+Lambda_1}\\
        &=\sum_{k=0}^{\left\lfloor\frac{u-1}{2}\right\rfloor}\sum_{i=0}^{u+1}(-1)^{k+i}\binom{u}{k}\binom{u+1}{i}\left(x_{(u+2-2i)j+(u+1-2k)l}^{\Gamma,+}\right)\\
        &+\sum_{k=0}^{\left\lfloor\frac{u-1}{2}\right\rfloor}\sum_{i=0}^{u+1}(-1)^{k+i}\binom{u}{k}\binom{u+1}{i}\left(x_{(u-2i)j+(u-1-2k)l}^{\Gamma,+}\right)\\
        &-\sum_{k=0}^{\left\lfloor\frac{u-1}{2}\right\rfloor}\sum_{i=0}^{u+1}(-1)^{k+i}\binom{u}{k}\binom{u+1}{i}\left(x_{(u+2-2i)j+(u-1-2k)l}^{\Gamma,+}\right)\\
        &-\sum_{k=0}^{\left\lfloor\frac{u-1}{2}\right\rfloor}\sum_{i=0}^{u+1}(-1)^{k+i}\binom{u}{k}\binom{u+1}{i}\left(x_{(u-2i)j+(u+1-2k)l}^{\Gamma,+}\right)\\
        &+((u+1)\mod2)\Bigg(\sum_{i=0}^{\frac{u}{2}}(-1)^{\frac{u}{2}+i}\binom{u}{\frac{u}{2}}\binom{u+1}{i}\left(x_{(u+2-2i)j+l}^{\Gamma,+}\right)\\
        &-\sum_{i=0}^{\frac{u}{2}}(-1)^{\frac{u}{2}+i}\binom{u}{\frac{u}{2}}\binom{u+1}{i}\left(x_{-(u-2i)j+l}^{\Gamma,+}\right)+\sum_{i=0}^{\frac{u}{2}}(-1)^{\frac{u}{2}+i}\binom{u}{\frac{u}{2}}\binom{u+1}{i}\left(x_{-(u+2-2i)j+l}^{\Gamma,+}\right)\\
        &-\sum_{i=0}^{\frac{u}{2}}(-1)^{\frac{u}{2}+i}\binom{u}{\frac{u}{2}}\binom{u+1}{i}\left(x_{(u-2i)j+l}^{\Gamma,+}\right)\Bigg)\\
        &=\sum_{k=0}^{\left\lfloor\frac{u-1}{2}\right\rfloor}\sum_{i=0}^{u+1}(-1)^{k+i}\binom{u}{k}\binom{u+1}{i}\left(x_{(u+2-2i)j+(u+1-2k)l}^{\Gamma,+}\right)\\
        &+\sum_{k=1}^{\left\lfloor\frac{u+1}{2}\right\rfloor}\sum_{i=1}^{u+2}(-1)^{k+i-2}\binom{u}{k-1}\binom{u+1}{i-1}\left(x_{(u+2-2i)j+(u+1-2k)l}^{\Gamma,+}\right)\\
        &-\sum_{k=1}^{\left\lfloor\frac{u+1}{2}\right\rfloor}\sum_{i=0}^{u+1}(-1)^{k-1+i}\binom{u}{k-1}\binom{u+1}{i}\left(x_{(u+2-2i)j+(u+1-2k)l}^{\Gamma,+}\right)\\
        &-\sum_{k=0}^{\left\lfloor\frac{u-1}{2}\right\rfloor}\sum_{i=1}^{u+2}(-1)^{k+i-1}\binom{u}{k}\binom{u+1}{i-1}\left(x_{(u+2-2i)j+(u+1-2k)l}^{\Gamma,+}\right)\\
        &+((u+1)\mod2)\Bigg(\sum_{i=0}^{\frac{u}{2}}(-1)^{\frac{u}{2}+i}\binom{u}{\frac{u}{2}}\binom{u+1}{i}\left(x_{(u+2-2i)j+l}^{\Gamma,+}\right)\\
        &-\sum_{i=1}^{\frac{u+2}{2}}(-1)^{\frac{u}{2}+i-1}\binom{u}{\frac{u}{2}}\binom{u+1}{i-1}\left(x_{-(u+2-2i)j+l}^{\Gamma,+}\right)+\sum_{i=0}^{\frac{u}{2}}(-1)^{\frac{u}{2}+i}\binom{u}{\frac{u}{2}}\binom{u+1}{i}\left(x_{-(u+2-2i)j+l}^{\Gamma,+}\right)\\
        &-\sum_{i=1}^{\frac{u+2}{2}}(-1)^{\frac{u}{2}+i-1}\binom{u}{\frac{u}{2}}\binom{u+1}{i-1}\left(x_{(u+2-2i)j+l}^{\Gamma,+}\right)\Bigg)\\
        &=\sum_{k=0}^{\left\lfloor\frac{u-1}{2}\right\rfloor}\sum_{i=0}^{u+1}(-1)^{k+i}\binom{u}{k}\binom{u+1}{i}\left(x_{(u+2-2i)j+(u+1-2k)l}^{\Gamma,+}\right)\\
        &+\sum_{k=1}^{\left\lfloor\frac{u+1}{2}\right\rfloor}\sum_{i=1}^{u+2}(-1)^{k+i}\binom{u}{k-1}\binom{u+1}{i-1}\left(x_{(u+2-2i)j+(u+1-2k)l}^{\Gamma,+}\right)\\
        &+\sum_{k=1}^{\left\lfloor\frac{u+1}{2}\right\rfloor}\sum_{i=0}^{u+1}(-1)^{k+i}\binom{u}{k-1}\binom{u+1}{i}\left(x_{(u+2-2i)j+(u+1-2k)l}^{\Gamma,+}\right)\\
        &+\sum_{k=0}^{\left\lfloor\frac{u-1}{2}\right\rfloor}\sum_{i=1}^{u+2}(-1)^{k+i}\binom{u}{k}\binom{u+1}{i-1}\left(x_{(u+2-2i)j+(u+1-2k)l}^{\Gamma,+}\right)\\
        &+((u+1)\mod2)\Bigg(\sum_{i=0}^{\frac{u}{2}}(-1)^{\frac{u}{2}+i}\binom{u}{\frac{u}{2}}\binom{u+1}{i}\left(x_{(u+2-2i)j+l}^{\Gamma,+}\right)\\
        &+\sum_{i=1}^{\frac{u+2}{2}}(-1)^{\frac{u}{2}+i}\binom{u}{\frac{u}{2}}\binom{u+1}{i-1}\left(x_{-(u+2-2i)j+l}^{\Gamma,+}\right)+\sum_{i=0}^{\frac{u}{2}}(-1)^{\frac{u}{2}+i}\binom{u}{\frac{u}{2}}\binom{u+1}{i}\left(x_{-(u+2-2i)j+l}^{\Gamma,+}\right)\\
        &+\sum_{i=1}^{\frac{u+2}{2}}(-1)^{\frac{u}{2}+i}\binom{u}{\frac{u}{2}}\binom{u+1}{i-1}\left(x_{(u+2-2i)j+l}^{\Gamma,+}\right)\Bigg)\\
        &=\sum_{k=1}^{\left\lfloor\frac{u-1}{2}\right\rfloor}\sum_{i=0}^{u+1}(-1)^{k+i}\binom{u}{k}\binom{u+1}{i}\left(x_{(u+2-2i)j+(u+1-2k)l}^{\Gamma,+}\right)+\sum_{i=0}^{u+1}(-1)^{i}\binom{u+1}{i}\left(x_{(u+2-2i)j+(u+1)l}^{\Gamma,+}\right)\\
        &+\sum_{k=1}^{\left\lfloor\frac{u-1}{2}\right\rfloor}\sum_{i=1}^{u+2}(-1)^{k+i}\binom{u}{k-1}\binom{u+1}{i-1}\left(x_{(u+2-2i)j+(u+1-2k)l}^{\Gamma,+}\right)\\
        &+\sum_{i=1}^{u+2}(-1)^{\left\lfloor\frac{u+1}{2}\right\rfloor+i}\binom{u}{\left\lfloor\frac{u-1}{2}\right\rfloor}\binom{u+1}{i-1}\left(x_{(u+2-2i)j+\left(u+1-2\left\lfloor\frac{u+1}{2}\right\rfloor\right)l}^{\Gamma,+}\right)\\
        &+\sum_{k=1}^{\left\lfloor\frac{u-1}{2}\right\rfloor}\sum_{i=0}^{u+1}(-1)^{k+i}\binom{u}{k-1}\binom{u+1}{i}\left(x_{(u+2-2i)j+(u+1-2k)l}^{\Gamma,+}\right)\\
        &+\sum_{i=0}^{u+1}(-1)^{\left\lfloor\frac{u+1}{2}\right\rfloor+i}\binom{u}{\left\lfloor\frac{u-1}{2}\right\rfloor}\binom{u+1}{i}\left(x_{(u+2-2i)j+\left(u+1-2\left\lfloor\frac{u+1}{2}\right\rfloor\right)l}^{\Gamma,+}\right)\\
        &+\sum_{k=1}^{\left\lfloor\frac{u-1}{2}\right\rfloor}\sum_{i=1}^{u+2}(-1)^{k+i}\binom{u}{k}\binom{u+1}{i-1}\left(x_{(u+2-2i)j+(u+1-2k)l}^{\Gamma,+}\right)+\sum_{i=1}^{u+2}(-1)^{i}\binom{u+1}{i-1}\left(x_{(u+2-2i)j+(u+1)l}^{\Gamma,+}\right)\\
        &+((u+1)\mod2)\Bigg(\sum_{i=1}^{\frac{u}{2}}(-1)^{\frac{u}{2}+i}\binom{u}{\frac{u}{2}}\binom{u+1}{i}\left(x_{(u+2-2i)j+l}^{\Gamma,+}\right)+(-1)^{\frac{u}{2}}\binom{u}{\frac{u}{2}}\left(x_{(u+2)j+l}^{\Gamma,+}\right)\\
        &+\sum_{i=1}^{\frac{u}{2}}(-1)^{\frac{u}{2}+i}\binom{u}{\frac{u}{2}}\binom{u+1}{i-1}\left(x_{-(u+2-2i)j+l}^{\Gamma,+}\right)+(-1)^{u+1}\binom{u}{\frac{u}{2}}\binom{u+1}{\frac{u}{2}}\left(x_{l}^{\Gamma,+}\right)\\
        &+\sum_{i=1}^{\frac{u}{2}}(-1)^{\frac{u}{2}+i}\binom{u}{\frac{u}{2}}\binom{u+1}{i}\left(x_{-(u+2-2i)j+l}^{\Gamma,+}\right)+(-1)^{\frac{u}{2}}\binom{u}{\frac{u}{2}}\left(x_{-(u+2)j+l}^{\Gamma,+}\right)\\
        &+\sum_{i=1}^{\frac{u}{2}}(-1)^{\frac{u}{2}+i}\binom{u}{\frac{u}{2}}\binom{u+1}{i-1}\left(x_{(u+2-2i)j+l}^{\Gamma,+}\right)+(-1)^{u+1}\binom{u}{\frac{u}{2}}\binom{u+1}{\frac{u}{2}}\left(x_{l}^{\Gamma,+}\right)\Bigg)\\
        &=\sum_{k=1}^{\left\lfloor\frac{u-1}{2}\right\rfloor}\sum_{i=1}^{u+1}(-1)^{k+i}\binom{u}{k}\binom{u+1}{i}\left(x_{(u+2-2i)j+(u+1-2k)l}^{\Gamma,+}\right)+\sum_{k=1}^{\left\lfloor\frac{u-1}{2}\right\rfloor}(-1)^{k}\binom{u}{k}\left(x_{(u+2)j+(u+1-2k)l}^{\Gamma,+}\right)\\
        &+\left(x_{(u+2)j+(u+1)l}^{\Gamma,+}\right)+\sum_{i=1}^{u+1}(-1)^{i}\binom{u+1}{i}\left(x_{(u+2-2i)j+(u+1)l}^{\Gamma,+}\right)\\
        &+\sum_{k=1}^{\left\lfloor\frac{u-1}{2}\right\rfloor}\sum_{i=1}^{u+1}(-1)^{k+i}\binom{u}{k-1}\binom{u+1}{i-1}\left(x_{(u+2-2i)j+(u+1-2k)l}^{\Gamma,+}\right)\\
        &+\sum_{k=1}^{\left\lfloor\frac{u-1}{2}\right\rfloor}(-1)^{k+u+2}\binom{u}{k-1}\left(x_{-(u+2)j+(u+1-2k)l}^{\Gamma,+}\right)\\
        &+\sum_{i=1}^{u+1}(-1)^{\left\lfloor\frac{u+1}{2}\right\rfloor+i}\binom{u}{\left\lfloor\frac{u-1}{2}\right\rfloor}\binom{u+1}{i-1}\left(x_{(u+2-2i)j+\left(u+1-2\left\lfloor\frac{u+1}{2}\right\rfloor\right)l}^{\Gamma,+}\right)\\
        &+(-1)^{\left\lfloor\frac{3u+5}{2}\right\rfloor}\binom{u}{\left\lfloor\frac{u-1}{2}\right\rfloor}\left(x_{-(u+2)j+\left(u+1-2\left\lfloor\frac{u+1}{2}\right\rfloor\right)l}^{\Gamma,+}\right)\\
        &+\sum_{k=1}^{\left\lfloor\frac{u-1}{2}\right\rfloor}\sum_{i=1}^{u+1}(-1)^{k+i}\binom{u}{k-1}\binom{u+1}{i}\left(x_{(u+2-2i)j+(u+1-2k)l}^{\Gamma,+}\right)\\
        &+\sum_{k=1}^{\left\lfloor\frac{u-1}{2}\right\rfloor}(-1)^{k}\binom{u}{k-1}\left(x_{(u+2)j+(u+1-2k)l}^{\Gamma,+}\right)\\
        &+(-1)^{\left\lfloor\frac{u+1}{2}\right\rfloor}\binom{u}{\left\lfloor\frac{u-1}{2}\right\rfloor}\left(x_{(u+2)j+\left(u+1-2\left\lfloor\frac{u+1}{2}\right\rfloor\right)l}^{\Gamma,+}\right)\\
        &+\sum_{i=1}^{u+1}(-1)^{\left\lfloor\frac{u+1}{2}\right\rfloor+i}\binom{u}{\left\lfloor\frac{u-1}{2}\right\rfloor}\binom{u+1}{i}\left(x_{(u+2-2i)j+\left(u+1-2\left\lfloor\frac{u+1}{2}\right\rfloor\right)l}^{\Gamma,+}\right)\\
        &+\sum_{k=1}^{\left\lfloor\frac{u-1}{2}\right\rfloor}\sum_{i=1}^{u+1}(-1)^{k+i}\binom{u}{k}\binom{u+1}{i-1}\left(x_{(u+2-2i)j+(u+1-2k)l}^{\Gamma,+}\right)\\
        &+\sum_{k=1}^{\left\lfloor\frac{u-1}{2}\right\rfloor}(-1)^{k+u+2}\binom{u}{k}\left(x_{-(u+2)j+(u+1-2k)l}^{\Gamma,+}\right)\\
        &+\sum_{i=1}^{u+1}(-1)^{i}\binom{u+1}{i-1}\left(x_{(u+2-2i)j+(u+1)l}^{\Gamma,+}\right)+(-1)^{u+2}\left(x_{-(u+2)j+(u+1)l}^{\Gamma,+}\right)\\
        &+((u+1)\mod2)\Bigg(\sum_{i=1}^{\frac{u}{2}}(-1)^{\frac{u}{2}+i}\binom{u}{\frac{u}{2}}\left(\binom{u+1}{i}+\binom{u+1}{i-1}\right)\left(x_{(u+2-2i)j+l}^{\Gamma,+}\right)\\
        &+(-1)^{\frac{u}{2}}\binom{u}{\frac{u}{2}}\left(x_{(u+2)j+l}^{\Gamma,+}\right)\\
        &+\sum_{i=1}^{\frac{u}{2}}(-1)^{\frac{u}{2}+i}\binom{u}{\frac{u}{2}}\left(\binom{u+1}{i-1}+\binom{u+1}{i}\right)\left(x_{-(u+2-2i)j+l}^{\Gamma,+}\right)+(-1)^{u+1}\binom{u}{\frac{u}{2}}\binom{u+1}{\frac{u}{2}}\left(x_{l}^{\Gamma,+}\right)\\
        &+(-1)^{\frac{u}{2}}\binom{u}{\frac{u}{2}}\left(x_{-(u+2)j+l}^{\Gamma,+}\right)+(-1)^{u+1}\binom{u}{\frac{u}{2}}\binom{u+1}{\frac{u}{2}}\left(x_{l}^{\Gamma,+}\right)\Bigg)\\
        &=\sum_{k=1}^{\left\lfloor\frac{u-1}{2}\right\rfloor}\sum_{i=1}^{u+1}(-1)^{k+i}\Bigg(\binom{u}{k}\bigg(\binom{u+1}{i}+\binom{u+1}{i-1}\bigg)\\
        &+\binom{u}{k-1}\bigg(\binom{u+1}{i}+\binom{u+1}{i-1}\bigg)\Bigg)\left(x_{(u+2-2i)j+(u+1-2k)l}^{\Gamma,+}\right)\\
        &+\sum_{k=1}^{\left\lfloor\frac{u-1}{2}\right\rfloor}(-1)^{k}\left(\binom{u}{k}+\binom{u}{k-1}\right)\left(x_{(u+2)j+(u+1-2k)l}^{\Gamma,+}\right)+\left(x_{(u+2)j+(u+1)l}^{\Gamma,+}\right)\\
        &+\sum_{i=1}^{u+1}(-1)^{i}\left(\binom{u+1}{i}+\binom{u+1}{i-1}\right)\left(x_{(u+2-2i)j+(u+1)l}^{\Gamma,+}\right)\\
        &+\sum_{k=1}^{\left\lfloor\frac{u-1}{2}\right\rfloor}(-1)^{k+u+2}\left(\binom{u}{k-1}+\binom{u}{k}\right)\left(x_{-(u+2)j+(u+1-2k)l}^{\Gamma,+}\right)\\
        &+\sum_{i=1}^{u+1}(-1)^{\left\lfloor\frac{u+1}{2}\right\rfloor+i}\binom{u}{\left\lfloor\frac{u-1}{2}\right\rfloor}\left(\binom{u+1}{i-1}+\binom{u+1}{i}\right)\left(x_{(u+2-2i)j+\left(u+1-2\left\lfloor\frac{u+1}{2}\right\rfloor\right)l}^{\Gamma,+}\right)\\
        &+(-1)^{\left\lfloor\frac{3u+5}{2}\right\rfloor}\binom{u}{\left\lfloor\frac{u-1}{2}\right\rfloor}\left(x_{-(u+2)j+\left(u+1-2\left\lfloor\frac{u+1}{2}\right\rfloor\right)l}^{\Gamma,+}\right)+(-1)^{\left\lfloor\frac{u+1}{2}\right\rfloor}\binom{u}{\left\lfloor\frac{u-1}{2}\right\rfloor}\left(x_{(u+2)j+\left(u+1-2\left\lfloor\frac{u+1}{2}\right\rfloor\right)l}^{\Gamma,+}\right)\\
        &+(-1)^{u+2}\left(x_{-(u+2)j+(u+1)l}^{\Gamma,+}\right)+((u+1)\mod2)\Bigg(\sum_{i=1}^{\frac{u}{2}}(-1)^{\frac{u}{2}+i}\binom{u}{\frac{u}{2}}\binom{u+2}{i}\left(x_{(u+2-2i)j+l}^{\Gamma,+}\right)\\
        &+(-1)^{\frac{u}{2}}\binom{u}{\frac{u}{2}}\left(x_{(u+2)j+l}^{\Gamma,+}\right)+\sum_{i=1}^{\frac{u}{2}}(-1)^{\frac{u}{2}+i}\binom{u}{\frac{u}{2}}\binom{u+2}{i}\left(x_{-(u+2-2i)j+l}^{\Gamma,+}\right)\\
        &+(-1)^{u+1}\binom{u}{\frac{u}{2}}\binom{u+1}{\frac{u}{2}}\left(x_{l}^{\Gamma,+}\right)\\
        &+(-1)^{\frac{u}{2}}\binom{u}{\frac{u}{2}}\left(x_{-(u+2)j+l}^{\Gamma,+}\right)+(-1)^{u+1}\binom{u}{\frac{u}{2}}\binom{u+1}{\frac{u}{2}}\left(x_{l}^{\Gamma,+}\right)\Bigg)\textnormal{ by Pascal's Rule}\\
        &=\sum_{k=1}^{\left\lfloor\frac{u-1}{2}\right\rfloor}\sum_{i=1}^{u+1}(-1)^{k+i}\binom{u+1}{k}\binom{u+2}{i}\left(x_{(u+2-2i)j+(u+1-2k)l}^{\Gamma,+}\right)\\
        &+\sum_{k=1}^{\left\lfloor\frac{u-1}{2}\right\rfloor}(-1)^{k}\binom{u+1}{k}\left(x_{(u+2)j+(u+1-2k)l}^{\Gamma,+}\right)\\
        &+\left(x_{(u+2)j+(u+1)l}^{\Gamma,+}\right)+\sum_{i=1}^{u+1}(-1)^{i}\binom{u+2}{i}\left(x_{(u+2-2i)j+(u+1)l}^{\Gamma,+}\right)\\
        &+\sum_{k=1}^{\left\lfloor\frac{u-1}{2}\right\rfloor}(-1)^{k+u+2}\binom{u+1}{k}\left(x_{-(u+2)j+(u+1-2k)l}^{\Gamma,+}\right)\\
        &+\sum_{i=1}^{u+1}(-1)^{\left\lfloor\frac{u+1}{2}\right\rfloor+i}\binom{u}{\left\lfloor\frac{u-1}{2}\right\rfloor}\binom{u+2}{i}\left(x_{(u+2-2i)j+\left(u+1-2\left\lfloor\frac{u+1}{2}\right\rfloor\right)l}^{\Gamma,+}\right)+(-1)^{u+2}\left(x_{-(u+2)j+(u+1)l}^{\Gamma,+}\right)\\
        &+(-1)^{\left\lfloor\frac{3u+5}{2}\right\rfloor}\binom{u}{\left\lfloor\frac{u-1}{2}\right\rfloor}\left(x_{-(u+2)j+\left(u+1-2\left\lfloor\frac{u+1}{2}\right\rfloor\right)l}^{\Gamma,+}\right)+(-1)^{\left\lfloor\frac{u+1}{2}\right\rfloor}\binom{u}{\left\lfloor\frac{u-1}{2}\right\rfloor}\left(x_{(u+2)j+\left(u+1-2\left\lfloor\frac{u+1}{2}\right\rfloor\right)l}^{\Gamma,+}\right)\\
        &+((u+1)\mod2)\Bigg(\sum_{i=0}^{\frac{u}{2}}(-1)^{\frac{u}{2}+i}\binom{u}{\frac{u}{2}}\binom{u+2}{i}\left(x_{(u+2-2i)j+l}^{\Gamma,+}\right)\\
        &+\sum_{i=0}^{\frac{u}{2}}(-1)^{\frac{u}{2}+i}\binom{u}{\frac{u}{2}}\binom{u+2}{i}\left(x_{-(u+2-2i)j+l}^{\Gamma,+}\right)\\
        &+2(-1)^{u+1}\binom{u}{\frac{u}{2}}\binom{u+1}{\frac{u}{2}}\left(x_{l}^{\Gamma,+}\right)\Bigg)\textnormal{ by Pascal's Rule}\\
        &=\sum_{k=1}^{\left\lfloor\frac{u-1}{2}\right\rfloor}\sum_{i=0}^{u+1}(-1)^{k+i}\binom{u+1}{k}\binom{u+2}{i}\left(x_{(u+2-2i)j+(u+1-2k)l}^{\Gamma,+}\right)\\
        &+\sum_{i=0}^{u+1}(-1)^{i}\binom{u+2}{i}\left(x_{(u+2-2i)j+(u+1)l}^{\Gamma,+}\right)\\
        &+\sum_{k=0}^{\left\lfloor\frac{u-1}{2}\right\rfloor}(-1)^{k+u+2}\binom{u+1}{k}\left(x_{-(u+2)j+(u+1-2k)l}^{\Gamma,+}\right)\\
        &+\sum_{i=0}^{u+2}(-1)^{\left\lfloor\frac{u+1}{2}\right\rfloor+i}\binom{u}{\left\lfloor\frac{u-1}{2}\right\rfloor}\binom{u+2}{i}\left(x_{(u+2-2i)j+\left(u+1-2\left\lfloor\frac{u+1}{2}\right\rfloor\right)l}^{\Gamma,+}\right)\\
        &+((u+1)\mod2)\Bigg(\sum_{i=0}^{\frac{u}{2}}(-1)^{\frac{u}{2}+i}\binom{u}{\frac{u}{2}}\binom{u+2}{i}\left(x_{(u+2-2i)j+l}^{\Gamma,+}\right)\\
        &+\sum_{i=\frac{u+4}{2}}^{u+2}(-1)^{\frac{3u+4}{2}-i}\binom{u}{\frac{u}{2}}\binom{u+2}{i}\left(x_{(u+2-2i)j+l}^{\Gamma,+}\right)+(-1)^{u+1}\binom{u}{\frac{u}{2}}\binom{u+2}{\frac{u+2}{2}}\left(x_{l}^{\Gamma,+}\right)\Bigg)\\
        &=\sum_{k=0}^{\left\lfloor\frac{u-1}{2}\right\rfloor}\sum_{i=0}^{u+2}(-1)^{k+i}\binom{u+1}{k}\binom{u+2}{i}\left(x_{(u+2-2i)j+(u+1-2k)l}^{\Gamma,+}\right)\\
        &+(u\mod2)\sum_{i=0}^{u+2}(-1)^{\frac{u+1}{2}+i}\binom{u}{\frac{u-1}{2}}\binom{u+2}{i}\left(x_{(u+2-2i)j}^{\Gamma,+}\right)\\
        &+((u+1)\mod2)\Bigg(\sum_{i=0}^{u+2}(-1)^{\frac{u}{2}+i}\binom{u}{\frac{u-2}{2}}\binom{u+2}{i}\left(x_{(u+2-2i)j+l}^{\Gamma,+}\right)\\
        &+\sum_{i=0}^{\frac{u+2}{2}}(-1)^{\frac{u}{2}+i}\binom{u}{\frac{u}{2}}\binom{u+2}{i}\left(x_{(u+2-2i)j+l}^{\Gamma,+}\right)\\
        &+\sum_{i=\frac{u+4}{2}}^{u+2}(-1)^{\frac{u}{2}+i}\binom{u}{\frac{u}{2}}\binom{u+2}{i}\left(x_{(u+2-2i)j+l}^{\Gamma,+}\right)\Bigg)\\
        &=\sum_{k=0}^{\left\lfloor\frac{u-1}{2}\right\rfloor}\sum_{i=0}^{u+2}(-1)^{k+i}\binom{u+1}{k}\binom{u+2}{i}\left(x_{(u+2-2i)j+(u+1-2k)l}^{\Gamma,+}\right)\\
        &+(u\mod2)\Bigg(\sum_{i=0}^{\frac{u+1}{2}}(-1)^{\frac{u+1}{2}+i}\binom{u}{\frac{u-1}{2}}\binom{u+2}{i}\left(x_{(u+2-2i)j}^{\Gamma,+}\right)\\
        &+\sum_{i=\frac{u+3}{2}}^{u+2}(-1)^{\frac{u+1}{2}+i}\binom{u}{\frac{u-1}{2}}\binom{u+2}{i}\left(x_{(u+2-2i)j}^{\Gamma,+}\right)\Bigg)\\
        &+((u+1)\mod2)\Bigg(\sum_{i=0}^{u+2}(-1)^{\frac{u}{2}+i}\binom{u}{\frac{u-2}{2}}\binom{u+2}{i}\left(x_{(u+2-2i)j+l}^{\Gamma,+}\right)\\
        &+\sum_{i=0}^{u+2}(-1)^{\frac{u}{2}+i}\binom{u}{\frac{u}{2}}\binom{u+2}{i}\left(x_{(u+2-2i)j+l}^{\Gamma,+}\right)\Bigg)\\
        &=\sum_{k=0}^{\left\lfloor\frac{u-1}{2}\right\rfloor}\sum_{i=0}^{u+2}(-1)^{k+i}\binom{u+1}{k}\binom{u+2}{i}\left(x_{(u+2-2i)j+(u+1-2k)l}^{\Gamma,+}\right)\\
        &+(u\mod2)\Bigg(\sum_{i=0}^{\frac{u+1}{2}}(-1)^{\frac{u+1}{2}+i}\binom{u}{\frac{u-1}{2}}\binom{u+2}{i}\left(x_{(u+2-2i)j}^{\Gamma,+}\right)\\
        &+\sum_{i=\frac{u+3}{2}}^{u+2}(-1)^{\frac{u+3}{2}+i}\binom{u}{\frac{u-1}{2}}\binom{u+2}{i}\left(x_{-(u+2-2i)j}^{\Gamma,+}\right)\Bigg)\\
        &+((u+1)\mod2)\sum_{i=0}^{u+2}(-1)^{\frac{u}{2}+i}\left(\binom{u}{\frac{u-2}{2}}+\binom{u}{\frac{u}{2}}\right)\binom{u+2}{i}\left(x_{(u+2-2i)j+l}^{\Gamma,+}\right)\\
        &=\sum_{k=0}^{\left\lfloor\frac{u-1}{2}\right\rfloor}\sum_{i=0}^{u+2}(-1)^{k+i}\binom{u+1}{k}\binom{u+2}{i}\left(x_{(u+2-2i)j+(u+1-2k)l}^{\Gamma,+}\right)\\
        &+(u\mod2)\Bigg(\sum_{i=0}^{\frac{u+1}{2}}(-1)^{\frac{u+1}{2}+i}\binom{u}{\frac{u-1}{2}}\binom{u+2}{i}\left(x_{(u+2-2i)j}^{\Gamma,+}\right)\\
        &+\sum_{i=0}^{\frac{u+1}{2}}(-1)^{\frac{3u+7}{2}-i}\binom{u}{\frac{u-1}{2}}\binom{u+2}{i}\left(x_{(u+2-2i)j}^{\Gamma,+}\right)\Bigg)\\
        &+((u+1)\mod2)\sum_{i=0}^{u+2}(-1)^{\frac{u}{2}+i}\binom{u+1}{\frac{u}{2}}\binom{u+2}{i}\left(x_{(u+2-2i)j+l}^{\Gamma,+}\right)\textnormal{ by Pascal's Rule}\\
        &=\sum_{k=0}^{\left\lfloor\frac{u}{2}\right\rfloor}\sum_{i=0}^{u+2}(-1)^{k+i}\binom{u+1}{k}\binom{u+2}{i}\left(x_{(u+2-2i)j+(u+1-2k)l}^{\Gamma,+}\right)\\
        &+(u\mod2)\Bigg(\sum_{i=0}^{\frac{u+1}{2}}(-1)^{\frac{u+1}{2}+i}\binom{u}{\frac{u-1}{2}}\binom{u+2}{i}\left(x_{(u+2-2i)j}^{\Gamma,+}\right)\\
        &+\sum_{i=0}^{\frac{u+1}{2}}(-1)^{\frac{u+1}{2}+i}\binom{u}{\frac{u-1}{2}}\binom{u+2}{i}\left(x_{(u+2-2i)j}^{\Gamma,+}\right)\Bigg)\\
        &=\sum_{k=0}^{\left\lfloor\frac{u}{2}\right\rfloor}\sum_{i=0}^{u+2}(-1)^{k+i}\binom{u+1}{k}\binom{u+2}{i}\left(x_{(u+2-2i)j+(u+1-2k)l}^{\Gamma,+}\right)\\
        &+2(u\mod2)\sum_{i=0}^{\frac{u+1}{2}}(-1)^{\frac{u+1}{2}+i}\binom{u}{\frac{u-1}{2}}\binom{u+2}{i}\left(x_{(u+2-2i)j}^{\Gamma,+}\right)\\
        &=\sum_{k=0}^{\left\lfloor\frac{u}{2}\right\rfloor}\sum_{i=0}^{u+2}(-1)^{k+i}\binom{u+1}{k}\binom{u+2}{i}\left(x_{(u+2-2i)j+(u+1-2k)l}^{\Gamma,+}\right)\\
        &+(u\mod2)\sum_{i=0}^{\frac{u+1}{2}}(-1)^{\frac{u+1}{2}+i}\binom{u+1}{\frac{u+1}{2}}\binom{u+2}{i}\left(x_{(u+2-2i)j}^{\Gamma,+}\right)
    \end{align*}
    \hfill\qedsymbol

 \subsection*{Proof of Proposition \ref{p_u}}
 
 Recall that the statement we wish to prove is for all $u\in\mathbb N$
    \begin{align*}
        p^{\Gamma}_u(j,l)
        &=\sum_{k=0}^{\left\lfloor\frac{u-1}{2}\right\rfloor}\sum_{i=0}^{u}(-1)^{k+i}\binom{u}{k}\binom{u}{i}\left(h_{(u-2i)j+(u-2k)l}^{\Gamma}\right)\nonumber\\
        &+((u+1)\mod2)\sum_{i=0}^{u}(-1)^{\frac{u}{2}+i}\binom{u-1}{\frac{u-2}{2}}\binom{u}{i}\left(h_{(u-2i)j}^{\Gamma}\right).
    \end{align*}
    
    By the definition of $p_u^\Gamma(j,l)$ we have:
    \begin{align*}
        p^{\Gamma}_u(j,l)&=\left[x_{j}^{\Gamma,+},D^{\Gamma,-}_{u-1,1}(j,l)\right]\\
        &=\sum_{k=0}^{\left\lfloor\frac{u-2}{2}\right\rfloor}\sum_{i=0}^{u}(-1)^{k+i}\binom{u-1}{k}\binom{u}{i}\left[x_{j}^{\Gamma,+},\left(x_{(u-2i)l+(u-1-2k)j}^{\Gamma,-}\right)\right]\\
        &+(u\mod2)\sum_{i=0}^{\frac{u-1}{2}}(-1)^{\frac{u-1}{2}+i}\binom{u-1}{\frac{u-1}{2}}\binom{u}{i}\left[x_{j}^{\Gamma,+},\left(x_{(u-2i)l}^{\Gamma,-}\right)\right]\textnormal{ by Proposition \ref{Du1}\eqref{D-u1}}\\
        &=\sum_{k=0}^{\left\lfloor\frac{u-2}{2}\right\rfloor}\sum_{i=0}^{u}(-1)^{k+i}\binom{u-1}{k}\binom{u}{i}\left(\left(h_{(u-2i)l+(u-2k)j}^{\Gamma}\right)-\left(h_{(u-2i)l+(u-2-2k)j}^{\Gamma}\right)\right)\\
        &+(u\mod2)\sum_{i=0}^{\frac{u-1}{2}}(-1)^{\frac{u-1}{2}+i}\binom{u-1}{\frac{u-1}{2}}\binom{u}{i}\left(\left(h_{j+(u-2i)l}^{\Gamma}\right)-\left(h_{-j+(u-2i)l}^{\Gamma}\right)\right)\\
        &=\sum_{i=0}^{\left\lfloor\frac{u-2}{2}\right\rfloor}\sum_{k=0}^{u}(-1)^{k+i}\binom{u}{k}\binom{u-1}{i}\left(\left(h_{(u-2i)j+(u-2k)l}^{\Gamma}\right)-\left(h_{(u-2-2i)j+(u-2k)l}^{\Gamma}\right)\right)\\
        &+(u\mod2)\sum_{k=0}^{\frac{u-1}{2}}(-1)^{\frac{u-1}{2}+k}\binom{u-1}{\frac{u-1}{2}}\binom{u}{k}\left(\left(h_{j+(u-2k)l}^{\Gamma}\right)-\left(h_{-j+(u-2k)l}^{\Gamma}\right)\right)\\
        &=\sum_{i=0}^{\left\lfloor\frac{u-2}{2}\right\rfloor}\sum_{k=0}^{\left\lfloor\frac{u}{2}\right\rfloor}(-1)^{k+i}\binom{u}{k}\binom{u-1}{i}\left(\left(h_{(u-2i)j+(u-2k)l}^{\Gamma}\right)-\left(h_{(u-2-2i)j+(u-2k)l}^{\Gamma}\right)\right)\\
        &+\sum_{i=0}^{\left\lfloor\frac{u-2}{2}\right\rfloor}\sum_{k=\left\lfloor\frac{u+2}{2}\right\rfloor}^{u}(-1)^{k+i}\binom{u}{k}\binom{u-1}{i}\left(\left(h_{(u-2i)j+(u-2k)l}^{\Gamma}\right)-\left(h_{(u-2-2i)j+(u-2k)l}^{\Gamma}\right)\right)\\
        &+(u\mod2)\sum_{k=0}^{\frac{u-1}{2}}(-1)^{\frac{u-1}{2}+k}\binom{u-1}{\frac{u-1}{2}}\binom{u}{k}\left(h_{j+(u-2k)l}^{\Gamma}\right)\\
        &+(u\mod2)\sum_{k=0}^{\frac{u-1}{2}}(-1)^{\frac{u+1}{2}+k}\binom{u-1}{\frac{u-1}{2}}\binom{u}{k}\left(h_{j-(u-2k)l}^{\Gamma}\right)\\
        &=\sum_{i=0}^{\left\lfloor\frac{u-2}{2}\right\rfloor}\sum_{k=0}^{\left\lfloor\frac{u}{2}\right\rfloor}(-1)^{k+i}\binom{u}{k}\binom{u-1}{i}\left(h_{(u-2i)j+(u-2k)l}^{\Gamma}\right)\\
        &+\sum_{i=0}^{\left\lfloor\frac{u-2}{2}\right\rfloor}\sum_{k=0}^{\left\lfloor\frac{u}{2}\right\rfloor}(-1)^{k+i+1}\binom{u}{k}\binom{u-1}{i}\left(h_{(u-2-2i)j+(u-2k)l}^{\Gamma}\right)\\
        &+\sum_{i=0}^{\left\lfloor\frac{u-2}{2}\right\rfloor}\sum_{k=\left\lfloor\frac{u+2}{2}\right\rfloor}^{u}(-1)^{k+i}\binom{u}{k}\binom{u-1}{i}\left(h_{(u-2i)j+(u-2k)l}^{\Gamma}\right)\\
        &+\sum_{i=0}^{\left\lfloor\frac{u-2}{2}\right\rfloor}\sum_{k=\left\lfloor\frac{u+2}{2}\right\rfloor}^{u}(-1)^{k+i+1}\binom{u}{k}\binom{u-1}{i}\left(h_{(u-2-2i)j+(u-2k)l}^{\Gamma}\right)\\
        &+(u\mod2)\sum_{k=0}^{\frac{u-1}{2}}(-1)^{\frac{u-1}{2}+k}\binom{u-1}{\frac{u-1}{2}}\binom{u}{k}\left(h_{j+(u-2k)l}^{\Gamma}\right)\\
        &+(u\mod2)\sum_{k=0}^{\frac{u-1}{2}}(-1)^{\frac{u+1}{2}+k}\binom{u-1}{\frac{u-1}{2}}\binom{u}{k}\left(h_{-j+(u-2k)l}^{\Gamma}\right)\\
        &=\sum_{i=0}^{\left\lfloor\frac{u-2}{2}\right\rfloor}\sum_{k=0}^{\left\lfloor\frac{u}{2}\right\rfloor}(-1)^{k+i}\binom{u}{k}\binom{u-1}{i}\left(h_{(u-2i)j+(u-2k)l}^{\Gamma}\right)+\sum_{i=1}^{\left\lfloor\frac{u}{2}\right\rfloor}\sum_{k=0}^{\left\lfloor\frac{u}{2}\right\rfloor}(-1)^{k+i}\binom{u}{k}\binom{u-1}{i-1}\left(h_{(u-2i)j+(u-2k)l}^{\Gamma}\right)\\
        &+\sum_{i=0}^{\left\lfloor\frac{u-2}{2}\right\rfloor}\sum_{k=\left\lfloor\frac{u+2}{2}\right\rfloor}^{u}(-1)^{k+i}\binom{u}{k}\binom{u-1}{i}\left(h_{(u-2i)j+(u-2k)l}^{\Gamma}\right)\\
        &+\sum_{i=1}^{\left\lfloor\frac{u}{2}\right\rfloor}\sum_{k=\left\lfloor\frac{u+2}{2}\right\rfloor}^{u}(-1)^{k+i}\binom{u}{k}\binom{u-1}{i-1}\left(h_{(u-2i)j+(u-2k)l}^{\Gamma}\right)\\
        &+(u\mod2)\sum_{k=0}^{\frac{u-1}{2}}(-1)^{\frac{u-1}{2}+k}\binom{u-1}{\frac{u-1}{2}}\binom{u}{k}\left(h_{j+(u-2k)l}^{\Gamma}\right)\\
        &+(u\mod2)\sum_{k=0}^{\frac{u-1}{2}}(-1)^{\frac{u+1}{2}+k}\binom{u-1}{\frac{u-1}{2}}\binom{u}{k}\left(h_{-j+(u-2k)l}^{\Gamma}\right)\\
        &=\sum_{k=0}^{\left\lfloor\frac{u}{2}\right\rfloor}(-1)^{k}\binom{u}{k}\left(h_{uj+(u-2k)l}^{\Gamma}\right)+\sum_{i=1}^{\left\lfloor\frac{u-2}{2}\right\rfloor}\sum_{k=0}^{\left\lfloor\frac{u}{2}\right\rfloor}(-1)^{k+i}\binom{u}{k}\binom{u-1}{i}\left(h_{(u-2i)j+(u-2k)l}^{\Gamma}\right)\\
        &+\sum_{i=1}^{\left\lfloor\frac{u-2}{2}\right\rfloor}\sum_{k=0}^{\left\lfloor\frac{u}{2}\right\rfloor}(-1)^{k+i}\binom{u}{k}\binom{u-1}{i-1}\left(h_{(u-2i)j+(u-2k)l}^{\Gamma}\right)+\sum_{k=0}^{\left\lfloor\frac{u}{2}\right\rfloor}(-1)^{k+\left\lfloor\frac{u}{2}\right\rfloor}\binom{u}{k}\binom{u-1}{\left\lfloor\frac{u-2}{2}\right\rfloor}\left(h_{(u-2\left\lfloor\frac{u}{2}\right\rfloor)j+(u-2k)l}^{\Gamma}\right)\\
        &+\sum_{i=0}^{\left\lfloor\frac{u-2}{2}\right\rfloor}\sum_{k=\left\lfloor\frac{u+2}{2}\right\rfloor}^{u}(-1)^{k+i}\binom{u}{k}\binom{u-1}{i}\left(h_{-(u-2i)j-(u-2k)l}^{\Gamma}\right)\\
        &+\sum_{i=1}^{\left\lfloor\frac{u}{2}\right\rfloor}\sum_{k=\left\lfloor\frac{u+2}{2}\right\rfloor}^{u}(-1)^{k+i}\binom{u}{k}\binom{u-1}{i-1}\left(h_{-(u-2i)j-(u-2k)l}^{\Gamma}\right)\\
        &+(u\mod2)\sum_{k=0}^{\frac{u-1}{2}}(-1)^{\frac{u-1}{2}+k}\binom{u-1}{\frac{u-1}{2}}\binom{u}{k}\left(h_{j+(u-2k)l}^{\Gamma}\right)\\
        &+(u\mod2)\sum_{k=0}^{\frac{u-1}{2}}(-1)^{\frac{u+1}{2}+k}\binom{u-1}{\frac{u-1}{2}}\binom{u}{k}\left(h_{-j+(u-2k)l}^{\Gamma}\right)\\
        &=\sum_{k=0}^{\left\lfloor\frac{u}{2}\right\rfloor}(-1)^{k}\binom{u}{k}\left(h_{uj+(u-2k)l}^{\Gamma}\right)+\sum_{i=1}^{\left\lfloor\frac{u-2}{2}\right\rfloor}\sum_{k=0}^{\left\lfloor\frac{u}{2}\right\rfloor}(-1)^{k+i}\binom{u}{k}\left(\binom{u-1}{i}+\binom{u-1}{i-1}\right)\left(h_{(u-2i)j+(u-2k)l}^{\Gamma}\right)\\
        &+((u+1)\mod2)\sum_{k=0}^{\frac{u}{2}}(-1)^{k+\frac{u}{2}}\binom{u}{k}\binom{u-1}{\frac{u-2}{2}}\left(h_{(u-2k)l}^{\Gamma}\right)\\
        &+\sum_{i=\left\lfloor\frac{u+3}{2}\right\rfloor}^{u}\sum_{k=0}^{\left\lfloor\frac{u-1}{2}\right\rfloor}(-1)^{2u-k-i}\binom{u}{k}\binom{u-1}{i-1}\left(h_{(u-2i)j+(u-2k)l}^{\Gamma}\right)\\
        &+\sum_{i=\left\lfloor\frac{u+1}{2}\right\rfloor}^{u-1}\sum_{k=0}^{\left\lfloor\frac{u-1}{2}\right\rfloor}(-1)^{2u-k-i}\binom{u}{k}\binom{u-1}{i}\left(h_{(u-2i)j+(u-2k)l}^{\Gamma}\right)\\
        &+(u\mod2)\sum_{k=0}^{\frac{u-1}{2}}(-1)^{\frac{u-1}{2}+k}\binom{u}{\frac{u-1}{2}}\binom{u}{k}\left(h_{j+(u-2k)l}^{\Gamma}\right)\\
        &+(u\mod2)\sum_{k=0}^{\frac{u-1}{2}}(-1)^{\frac{u+1}{2}+k}\binom{u-1}{\frac{u-1}{2}}\binom{u}{k}\left(h_{-j+(u-2k)l}^{\Gamma}\right)\\
        &=\sum_{k=0}^{\left\lfloor\frac{u}{2}\right\rfloor}(-1)^{k}\binom{u}{k}\left(h_{uj+(u-2k)l}^{\Gamma}\right)+\sum_{i=1}^{\left\lfloor\frac{u-2}{2}\right\rfloor}\sum_{k=0}^{\left\lfloor\frac{u}{2}\right\rfloor}(-1)^{k+i}\binom{u}{k}\binom{u}{i}\left(h_{(u-2i)j+(u-2k)l}^{\Gamma}\right)\\
        &+((u+1)\mod2)\sum_{k=0}^{\frac{u}{2}}(-1)^{k+\frac{u}{2}}\binom{u}{k}\binom{u-1}{\frac{u-2}{2}}\left(h_{(u-2k)l}^{\Gamma}\right)+\sum_{k=0}^{\left\lfloor\frac{u-1}{2}\right\rfloor}(-1)^{u+k}\binom{u}{k}\left(h_{-uj+(u-2k)l}^{\Gamma}\right)\\
        &+\sum_{i=\left\lfloor\frac{u+3}{2}\right\rfloor}^{u-1}\sum_{k=0}^{\left\lfloor\frac{u-1}{2}\right\rfloor}(-1)^{k+i}\binom{u}{k}\binom{u-1}{i-1}\left(h_{(u-2i)j+(u-2k)l}^{\Gamma}\right)\\
        &+\sum_{i=\left\lfloor\frac{u+3}{2}\right\rfloor}^{u-1}\sum_{k=0}^{\left\lfloor\frac{u-1}{2}\right\rfloor}(-1)^{k+i}\binom{u}{k}\binom{u-1}{i}\left(h_{(u-2i)j+(u-2k)l}^{\Gamma}\right)\\
        &+\sum_{k=0}^{\left\lfloor\frac{u-1}{2}\right\rfloor}(-1)^{k+\left\lfloor\frac{u+1}{2}\right\rfloor}\binom{u}{k}\binom{u-1}{\left\lfloor\frac{u+1}{2}\right\rfloor}\left(h_{(u-2\left\lfloor\frac{u+1}{2}\right\rfloor)j+(u-2k)l}^{\Gamma}\right)\\
        &+(u\mod2)\sum_{k=0}^{\frac{u-1}{2}}(-1)^{\frac{u-1}{2}+k}\binom{u}{\frac{u-1}{2}}\binom{u}{k}\left(h_{j+(u-2k)l}^{\Gamma}\right)\\
        &+(u\mod2)\sum_{k=0}^{\frac{u-1}{2}}(-1)^{\frac{u+1}{2}+k}\binom{u-1}{\frac{u-1}{2}}\binom{u}{k}\left(h_{-j+(u-2k)l}^{\Gamma}\right)\textnormal{ by Pascal's Rule}\\
        &=\sum_{i=0}^{\left\lfloor\frac{u-2}{2}\right\rfloor}\sum_{k=0}^{\left\lfloor\frac{u}{2}\right\rfloor}(-1)^{k+i}\binom{u}{k}\binom{u}{i}\left(h_{(u-2i)j+(u-2k)l}^{\Gamma}\right)\\
        &+((u+1)\mod2)\sum_{k=0}^{\frac{u-2}{2}}(-1)^{k+\frac{u}{2}}\binom{u}{k}\binom{u}{\frac{u}{2}}\left(h_{(u-2k)l}^{\Gamma}\right)\\
        &+((u+1)\mod2)(-1)^{u}\binom{u}{\frac{u}{2}}\binom{u-1}{\frac{u-2}{2}}\left(h_{0}^{\Gamma}\right)+\sum_{k=0}^{\left\lfloor\frac{u-1}{2}\right\rfloor}(-1)^{u+k}\binom{u}{k}\left(h_{-uj+(u-2k)l}^{\Gamma}\right)\\
        &+\sum_{i=\left\lfloor\frac{u+3}{2}\right\rfloor}^{u-1}\sum_{k=0}^{\left\lfloor\frac{u-1}{2}\right\rfloor}(-1)^{k+i}\binom{u}{k}\left(\binom{u-1}{i-1}+\binom{u-1}{i}\right)\left(h_{(u-2i)j+(u-2k)l}^{\Gamma}\right)\\
        &+(u\mod2)\sum_{k=0}^{\frac{u-1}{2}}(-1)^{\frac{u-1}{2}+k}\binom{u}{\frac{u-1}{2}}\binom{u}{k}\left(h_{j+(u-2k)l}^{\Gamma}\right)\\
        &+(u\mod2)\sum_{k=0}^{\frac{u-1}{2}}(-1)^{\frac{u+1}{2}+k}\binom{u}{\frac{u+1}{2}}\binom{u}{k}\left(h_{-j+(u-2k)l}^{\Gamma}\right)\\
        &=\sum_{i=0}^{\left\lfloor\frac{u-2}{2}\right\rfloor}\sum_{k=0}^{\left\lfloor\frac{u}{2}\right\rfloor}(-1)^{k+i}\binom{u}{k}\binom{u}{i}\left(h_{(u-2i)j+(u-2k)l}^{\Gamma}\right)+((u+1)\mod2)\sum_{k=0}^{\frac{u-2}{2}}(-1)^{k+\frac{u}{2}}\binom{u}{k}\binom{u}{\frac{u}{2}}\left(h_{(u-2k)l}^{\Gamma}\right)\\
        &+((u+1)\mod2)(-1)^{u}\binom{u}{\frac{u}{2}}\binom{u-1}{\frac{u-2}{2}}\left(h_{0}^{\Gamma}\right)+\sum_{k=0}^{\left\lfloor\frac{u-1}{2}\right\rfloor}(-1)^{u+k}\binom{u}{k}\left(h_{-uj+(u-2k)l}^{\Gamma}\right)\\
        &+\sum_{i=\left\lfloor\frac{u+3}{2}\right\rfloor}^{u-1}\sum_{k=0}^{\left\lfloor\frac{u-1}{2}\right\rfloor}(-1)^{k+i}\binom{u}{k}\binom{u}{i}\left(h_{(u-2i)j+(u-2k)l}^{\Gamma}\right)\\
        &+(u\mod2)\sum_{k=0}^{\frac{u-1}{2}}\sum_{i=\frac{u-1}{2}}^{\frac{u+1}{2}}(-1)^{i+k}\binom{u}{i}\binom{u}{k}\left(h_{(u-2i)j+(u-2k)l}^{\Gamma}\right)\\
        &=\sum_{i=0}^{\left\lfloor\frac{u-2}{2}\right\rfloor}\sum_{k=0}^{\left\lfloor\frac{u-1}{2}\right\rfloor}(-1)^{k+i}\binom{u}{k}\binom{u}{i}\left(h_{(u-2i)j+(u-2k)l}^{\Gamma}\right)+((u+1)\mod2)\sum_{i=0}^{\frac{u-2}{2}}(-1)^{\frac{u}{2}+i}\binom{u}{\frac{u}{2}}\binom{u}{i}\left(h_{(u-2i)j}^{\Gamma}\right)\\
        &+((u+1)\mod2)\sum_{k=0}^{\frac{u-2}{2}}(-1)^{k+\frac{u}{2}}\binom{u}{k}\binom{u}{\frac{u}{2}}\left(h_{(u-2k)l}^{\Gamma}\right)+((u+1)\mod2)(-1)^{u}\binom{u}{\frac{u}{2}}\binom{u-1}{\frac{u-2}{2}}\left(h_{0}^{\Gamma}\right)\\
        &+\sum_{i=\left\lfloor\frac{u+3}{2}\right\rfloor}^{u}\sum_{k=0}^{\left\lfloor\frac{u-1}{2}\right\rfloor}(-1)^{k+i}\binom{u}{k}\binom{u}{i}\left(h_{(u-2i)j+(u-2k)l}^{\Gamma}\right)\\
        &+(u\mod2)\sum_{k=0}^{\frac{u-1}{2}}\sum_{i=\frac{u-1}{2}}^{\frac{u+1}{2}}(-1)^{i+k}\binom{u}{i}\binom{u}{k}\left(h_{(u-2i)j+(u-2k)l}^{\Gamma}\right)\\
        &=\sum_{i=0}^{u}\sum_{k=0}^{\left\lfloor\frac{u-1}{2}\right\rfloor}(-1)^{k+i}\binom{u}{k}\binom{u}{i}\left(h_{(u-2i)j+(u-2k)l}^{\Gamma}\right)\\
        &+2((u+1)\mod2)\sum_{i=0}^{\frac{u-2}{2}}(-1)^{\frac{u}{2}+i}\binom{u-1}{\frac{u-2}{2}}\binom{u}{i}\left(h_{(u-2i)j}^{\Gamma}\right)\\
        &+((u+1)\mod2)(-1)^{u}\binom{u}{\frac{u}{2}}\binom{u-1}{\frac{u-2}{2}}\left(h_{0}^{\Gamma}\right)\\
        &=\sum_{i=0}^{u}\sum_{k=0}^{\left\lfloor\frac{u-1}{2}\right\rfloor}(-1)^{k+i}\binom{u}{k}\binom{u}{i}\left(h_{(u-2i)j+(u-2k)l}^{\Gamma}\right)\\
        &+((u+1)\mod2)\sum_{i=0}^{\frac{u-2}{2}}(-1)^{\frac{u}{2}+i}\binom{u-1}{\frac{u-2}{2}}\binom{u}{i}\left(h_{(u-2i)j}^{\Gamma}\right)\\
        &+((u+1)\mod2)\sum_{i=0}^{\frac{u-2}{2}}(-1)^{\frac{u}{2}+i}\binom{u-1}{\frac{u-2}{2}}\binom{u}{i}\left(h_{(u-2i)j}^{\Gamma}\right)\\
        &+((u+1)\mod2)(-1)^{u}\binom{u}{\frac{u}{2}}\binom{u-1}{\frac{u-2}{2}}\left(h_{0}^{\Gamma}\right)\\
        &=\sum_{i=0}^{u}\sum_{k=0}^{\left\lfloor\frac{u-1}{2}\right\rfloor}(-1)^{k+i}\binom{u}{k}\binom{u}{i}\left(h_{(u-2i)j+(u-2k)l}^{\Gamma}\right)\\
        &+((u+1)\mod2)\sum_{i=0}^{\frac{u}{2}}(-1)^{\frac{u}{2}+i}\binom{u-1}{\frac{u-2}{2}}\binom{u}{i}\left(h_{(u-2i)j}^{\Gamma}\right)\\
        &+((u+1)\mod2)\sum_{i=0}^{\frac{u-2}{2}}(-1)^{\frac{u}{2}+i}\binom{u-1}{\frac{u-2}{2}}\binom{u}{i}\left(h_{-(u-2i)j}^{\Gamma}\right)\\
        &=\sum_{i=0}^{u}\sum_{k=0}^{\left\lfloor\frac{u-1}{2}\right\rfloor}(-1)^{k+i}\binom{u}{k}\binom{u}{i}\left(h_{(u-2i)j+(u-2k)l}^{\Gamma}\right)\\
        &+((u+1)\mod2)\sum_{i=0}^{\frac{u}{2}}(-1)^{\frac{u}{2}+i}\binom{u-1}{\frac{u-2}{2}}\binom{u}{i}\left(h_{(u-2i)j}^{\Gamma}\right)\\
        &+((u+1)\mod2)\sum_{i=\frac{u+2}{2}}^{u}(-1)^{\frac{3u}{2}-i}\binom{u-1}{\frac{u-2}{2}}\binom{u}{i}\left(h_{(u-2i)j}^{\Gamma}\right)\\
        &=\sum_{k=0}^{\left\lfloor\frac{u-1}{2}\right\rfloor}\sum_{i=0}^{u}(-1)^{k+i}\binom{u}{k}\binom{u}{i}\left(h_{(u-2i)j+(u-2k)l}^{\Gamma}\right)\\
        &+((u+1)\mod2)\sum_{i=0}^{u}(-1)^{\frac{u}{2}+i}\binom{u-1}{\frac{u-2}{2}}\binom{u}{i}\left(h_{(u-2i)j}^{\Gamma}\right)
    \end{align*}
\hfill\qedsymbol

    \subsection*{Proof of \eqref{p2n+1} and \eqref{p2n} from Lemma \ref{LambdaLambda}}
    
    Recall that the statement we wish to prove is for all $n\in{\mathbb Z}_+$
    \begin{align*}
        p_{2n+1}^\Gamma(j,l)
        &=\sum_{i=0}^{n}\sum_{k=0}^{n}(-1)^{k+i+1}\binom{2n+1}{i}\binom{2n+1}{k}\Lambda^\Gamma_{(2n+1-2i)j,(2n+1-2k)l,1}.\\
        p^\Gamma_{2n}(j,l)
        &=\sum_{i=0}^{n-1}\sum_{k=0}^{2n}(-1)^{k+i+1}\binom{2n-1}{i}\binom{2n}{k}\Lambda^\Gamma_{(2n-1-2i)j+(2n-2k)l,j,1}.
    \end{align*}
    
    For \eqref{p2n+1}, by Proposition \ref{p_u}, we have:
    \begin{align*}
        p_{2n+1}^\Gamma(j,l)
        &=\sum_{k=0}^{n}\sum_{i=0}^{2n+1}(-1)^{k+i}\binom{2n+1}{i}\binom{2n+1}{k}h^\Gamma_{(2n+1-2i)j+(2n+1-2k)l}\\
        &=\sum_{k=0}^{n}\sum_{i=0}^{n}(-1)^{k+i}\binom{2n+1}{i}\binom{2n+1}{k}h^\Gamma_{(2n+1-2i)j+(2n+1-2k)l}\\
        &+\sum_{k=0}^{n}\sum_{i=n+1}^{2n+1}(-1)^{k+i}\binom{2n+1}{i}\binom{2n+1}{k}h^\Gamma_{(2n+1-2i)j+(2n+1-2k)l}\\
        &=-\sum_{k=0}^{n}\sum_{i=0}^{n}(-1)^{k+i+1}\binom{2n+1}{i}\binom{2n+1}{k}h^\Gamma_{(2n+1-2i)j+(2n+1-2k)l}\\
        &+\sum_{k=0}^{n}\sum_{i=n+1}^{2n+1}(-1)^{k+i}\binom{2n+1}{i}\binom{2n+1}{k}h^\Gamma_{-(2n+1-2i)j-(2n+1-2k)l}\\
        &=-\sum_{k=0}^{n}\sum_{i=0}^{n}(-1)^{k+i+1}\binom{2n+1}{i}\binom{2n+1}{k}h^\Gamma_{(2n+1-2i)j+(2n+1-2k)l}\\
        &+\sum_{k=0}^{n}\sum_{i=0}^{n}(-1)^{k+i+1}\binom{2n+1}{i}\binom{2n+1}{k}h^\Gamma_{(2n+1-2i)j-(2n+1-2k)l}\\
        &=\sum_{k=0}^{n}\sum_{i=0}^{n}(-1)^{k+i+1}\binom{2n+1}{i}\binom{2n+1}{k}\Lambda^\Gamma_{(2n+1-2i)j,(2n+1-2k)l,1}.
    \end{align*}
    To prove \eqref{p2n} we use Proposition \ref{p_u} again to get
    \begin{align*}
        p_{2n}^\Gamma(j,l)
        &=\sum_{k=0}^{n-1}\sum_{i=0}^{2n}(-1)^{k+i}\binom{2n}{i}\binom{2n}{k}h^\Gamma_{(2n-2i)j+(2n-2k)l}+\sum_{i=0}^{2n}(-1)^{n+i}\binom{2n-1}{n-1}\binom{2n}{i}h^\Gamma_{(2n-2i)j}\\
        &=\sum_{k=0}^{n-1}\sum_{i=0}^{n-1}(-1)^{k+i}\binom{2n}{i}\binom{2n}{k}h^\Gamma_{(2n-2i)j+(2n-2k)l}+\sum_{k=0}^{n-1}\sum_{i=n}^{2n}(-1)^{k+i}\binom{2n}{i}\binom{2n}{k}h^\Gamma_{(2n-2i)j+(2n-2k)l}\\
        &+\sum_{i=0}^{n-1}(-1)^{n+i}\binom{2n-1}{n-1}\binom{2n}{i}h^\Gamma_{(2n-2i)j}+\sum_{i=n}^{2n}(-1)^{n+i}\binom{2n-1}{n-1}\binom{2n}{i}h^\Gamma_{(2n-2i)j}\\
        &=\sum_{k=0}^{n-1}\sum_{i=0}^{n-1}(-1)^{k+i}\binom{2n}{i}\binom{2n}{k}h^\Gamma_{(2n-2i)j+(2n-2k)l}+\sum_{k=0}^{n-1}\sum_{i=n}^{2n}(-1)^{k+i}\binom{2n}{i}\binom{2n}{k}h^\Gamma_{-(2n-2i)j-(2n-2k)l}\\
        &+\sum_{i=0}^{n-1}(-1)^{n+i}\binom{2n-1}{n-1}\binom{2n}{i}h^\Gamma_{(2n-2i)j}+\sum_{i=n}^{2n}(-1)^{n+i}\binom{2n-1}{n-1}\binom{2n}{i}h^\Gamma_{-(2n-2i)j}\\
        &=\sum_{k=0}^{n-1}\sum_{i=0}^{n-1}(-1)^{k+i}\binom{2n}{i}\binom{2n}{k}h^\Gamma_{(2n-2i)j+(2n-2k)l}+\sum_{k=0}^{n-1}\sum_{i=0}^{n}(-1)^{k+i}\binom{2n}{i}\binom{2n}{k}h^\Gamma_{(2n-2i)j-(2n-2k)l}\\
        &+\sum_{i=0}^{n-1}(-1)^{n+i}\binom{2n-1}{n-1}\binom{2n}{i}h^\Gamma_{(2n-2i)j}+\sum_{i=0}^{n}(-1)^{n+i}\binom{2n-1}{n-1}\binom{2n}{i}h^\Gamma_{(2n-2i)j}\\
        &=\sum_{k=0}^{n-1}\sum_{i=0}^{n-1}(-1)^{k+i}\binom{2n}{i}\binom{2n}{k}h^\Gamma_{(2n-2i)j+(2n-2k)l}+\sum_{k=n+1}^{2n}\sum_{i=0}^{n}(-1)^{k+i}\binom{2n}{i}\binom{2n}{k}h^\Gamma_{(2n-2i)j+(2n-2k)l}\\
        &+\sum_{i=0}^{n-1}(-1)^{n+i}\binom{2n-1}{n-1}\binom{2n}{i}h^\Gamma_{(2n-2i)j}+\sum_{i=0}^{n}(-1)^{n+i}\binom{2n-1}{n-1}\binom{2n}{i}h^\Gamma_{(2n-2i)j}\\
        &=\sum_{k=0}^{n-1}\sum_{i=0}^{n-1}(-1)^{k+i}\binom{2n}{i}\binom{2n}{k}h^\Gamma_{(2n-2i)j+(2n-2k)l}+\sum_{k=n+1}^{2n}\sum_{i=0}^{n-1}(-1)^{k+i}\binom{2n}{i}\binom{2n}{k}h^\Gamma_{(2n-2i)j+(2n-2k)l}\\
        &+\sum_{k=n+1}^{2n}(-1)^{k+n}\binom{2n}{n}\binom{2n}{k}h^\Gamma_{(2n-2k)l}+2\sum_{i=0}^{n-1}(-1)^{n+i}\binom{2n-1}{n-1}\binom{2n}{i}h^\Gamma_{(2n-2i)j}+\binom{2n-1}{n-1}\binom{2n}{n}h^\Gamma_{0}\\
        &=\sum_{k=0}^{n-1}\sum_{i=0}^{n-1}(-1)^{k+i}\binom{2n}{i}\binom{2n}{k}h^\Gamma_{(2n-2i)j+(2n-2k)l}+\sum_{k=n+1}^{2n}\sum_{i=0}^{n-1}(-1)^{k+i}\binom{2n}{i}\binom{2n}{k}h^\Gamma_{(2n-2i)j+(2n-2k)l}\\
        &+2\sum_{k=n+1}^{2n}(-1)^{k+n}\binom{2n-1}{n-1}\binom{2n}{k}h^\Gamma_{(2n-2k)l}+\sum_{i=0}^{n-1}(-1)^{n+i}\binom{2n}{n}\binom{2n}{i}h^\Gamma_{(2n-2i)j}+\binom{2n-1}{n-1}\binom{2n}{n}h^\Gamma_{0}\\
        &=\sum_{k=0}^{2n}\sum_{i=0}^{n-1}(-1)^{k+i}\binom{2n}{i}\binom{2n}{k}h^\Gamma_{(2n-2i)j+(2n-2k)l}+\sum_{k=n+1}^{2n}(-1)^{k+n}\binom{2n-1}{n-1}\binom{2n}{k}h^\Gamma_{(2n-2k)l}\\
        &+\sum_{k=n+1}^{2n}(-1)^{k+n}\binom{2n-1}{n-1}\binom{2n}{k}h^\Gamma_{(2n-2k)l}+\binom{2n-1}{n-1}\binom{2n}{n}h^\Gamma_{0}\\
        &=\sum_{k=0}^{2n}\sum_{i=0}^{n-1}(-1)^{k+i}\binom{2n}{i}\binom{2n}{k}h^\Gamma_{(2n-2i)j+(2n-2k)l}+\sum_{k=n+1}^{2n}(-1)^{k+n}\binom{2n-1}{n-1}\binom{2n}{k}h^\Gamma_{-(2n-2k)l}\\
        &+\sum_{k=n+1}^{2n}(-1)^{k+n}\binom{2n-1}{n-1}\binom{2n}{k}h^\Gamma_{(2n-2k)l}+\binom{2n-1}{n-1}\binom{2n}{n}h^\Gamma_{0}\\
        &=\sum_{k=0}^{2n}\sum_{i=0}^{n-1}(-1)^{k+i}\binom{2n}{i}\binom{2n}{k}h^\Gamma_{(2n-2i)j+(2n-2k)l}+\sum_{k=0}^{n-1}(-1)^{k+n}\binom{2n-1}{n-1}\binom{2n}{k}h^\Gamma_{(2n-2k)l}\\
        &+\sum_{k=n+1}^{2n}(-1)^{k+n}\binom{2n-1}{n-1}\binom{2n}{k}h^\Gamma_{(2n-2k)l}+\binom{2n-1}{n-1}\binom{2n}{n}h^\Gamma_{0}\\
        &=\sum_{k=0}^{2n}\sum_{i=0}^{n-1}(-1)^{k+i}\binom{2n}{i}\binom{2n}{k}h^\Gamma_{(2n-2i)j+(2n-2k)l}+\sum_{k=0}^{2n}(-1)^{k+n}\binom{2n-1}{n-1}\binom{2n}{k}h^\Gamma_{(2n-2k)l}\\
        &=\sum_{i=0}^{n-1}\sum_{k=0}^{2n}(-1)^{k+i}\left(\binom{2n-1}{i}+\binom{2n-1}{i-1}\right)\binom{2n}{k}h^\Gamma_{(2n-2i)j+(2n-2k)l}\\
        &+\sum_{k=0}^{2n}(-1)^{k+n}\binom{2n-1}{n-1}\binom{2n}{k}h^\Gamma_{(2n-2k)l}\textnormal{ by Pascal's Rule}\\
        &=\sum_{i=0}^{n-1}\sum_{k=0}^{2n}(-1)^{k+i}\binom{2n-1}{i}\binom{2n}{k}h^\Gamma_{(2n-2i)j+(2n-2k)l}\\
        &+\sum_{i=1}^{n}\sum_{k=0}^{2n}(-1)^{k+i}\binom{2n-1}{i-1}\binom{2n}{k}h^\Gamma_{(2n-2i)j+(2n-2k)l}\\
        &=\sum_{i=0}^{n-1}\sum_{k=0}^{2n}(-1)^{k+i}\binom{2n-1}{i}\binom{2n}{k}h^\Gamma_{(2n-2i)j+(2n-2k)l}\\
        &+\sum_{i=0}^{n-1}\sum_{k=0}^{2n}(-1)^{k+i+1}\binom{2n-1}{i}\binom{2n}{k}h^\Gamma_{(2n-2-2i)j+(2n-2k)l}\\
        &=-\sum_{i=0}^{n-1}\sum_{k=0}^{2n}(-1)^{k+i+1}\binom{2n-1}{i}\binom{2n}{k}\left(h^\Gamma_{(2n-2i)j+(2n-2k)l}-h^\Gamma_{(2n-2-2i)j+(2n-2k)l}\right)\\
        &=\sum_{i=0}^{n-1}\sum_{k=0}^{2n}(-1)^{k+i+1}\binom{2n-1}{i}\binom{2n}{k}\Lambda^\Gamma_{(2n-1-2i)j+(2n-2k)l,j,1} \hfill\qedsymbol
            \end{align*}
            
            }
    

\begin{thebibliography}{99}

\bibitem{BC} I. Bagci, S. Chamberlin, \textit{Integral bases for the universal enveloping algebras of map superalgebras}, J. Pure Appl. Algebra \textbf{218} (8) (2014), pp. 1563--1576. DOI: https://doi.org/10.1007/s10468-016-9603-x 


\bibitem{BT07} G. Benkart and P. Terwilliger, \textit{The universal central extension of the three-point   {$\mathfrak{sl}_2$}-loop algebra}, Proc. Amer. Math. Soc., \textbf{135}(6) (2007), pp. 1659--1668. DOI: https://doi.org/10.1090/S0002-9939-07-08765-5


\bibitem{BiC1} A. Bianchi, S. Chamberlin, \textit{Finite-dimensional representations of hyper multicurrent and multiloop algebras}, Algebr Represent Theor (2020). DOI: https://doi.org/10.1007/s10468-020-09955-z

\bibitem{BiC2} A. Bianchi, S. Chamberlin, \textit{Weyl  Modules  and  Weyl  Functors  for  hyper--map algebras} (in preparation).

\bibitem{BM} A. Bianchi, A. Moura,\textit{ Finite-dimensional representations of twisted hyper-loop algebras}, Comm. Algebra \textbf{42} (7) (2014), pp. 3147--3182. DOI: https://doi.org/10.1080/00927872.2013.781610


\bibitem{C} S. Chamberlin, \textit{Integral bases for the universal enveloping algebras of map algebras}, J. of Algebra 377 (1) (2013),  pp. 232--249. DOI: https://doi.org/10.1016/j.jalgebra.2012.11.046

\bibitem{CP} V. Chari, A. Pressley, \textit{Quantum affine algebras at roots of unity}, Represent. Theory \textbf{1} (1997), pp. 280–-328. DOI: https://doi.org/10.1090/S1088-4165-97-00030-7


\bibitem{el} C. EI-Chaar, \textit{The Onsager algebra}, Master thesis at University of Ottawa, (2010). DOI: http://dx.doi.org/10.20381/ruor-19393

\bibitem{DR00} E. Date, S. Roan, \textit{The algebraic structure of the Onsager algebra},  Czechoslovak J. Phys., \textbf{50}(1) (2000), pp. 37--44. DOI: https://doi.org/10.1023/A:1022812728907

\bibitem{E07} A. Elduque, \textit{The {$S\sb 4$}-action on tetrahedron algebra}, Proc. Roy. Soc. Edinburgh Sect. A, \textbf{137}(6) 2007, pp. 1227--1248. DOI: https://doi.org/10.1017/S0308210506000473


\bibitem{G} H. Garland, \textit{The arithmetic theory of loop algebras}, J. Algebra \textbf{53} (1978) , pp.480--551. DOI: https://doi.org/10.1016/0021-8693(78)90294-6

\bibitem{HT07} B. Hartwig, P.Terwilliger, \textit{ The tetrahedron algebra, the Onsager algebra, and the {$\mathfrak{sl}\sb 2$}-loop algebra}, J. Algebra \textbf{308}(2) (2007), pp. 840--863. DOI: https://doi.org/10.1016/j.jalgebra.2006.09.011

\bibitem{H} J. E. Humphreys, Introduction to Lie algebras and representation theory, Springer--Verlag, GTM \textbf{9} (1970). DOI: 10.1007/978-1-4612-6398-2

\bibitem{JM1} D. Jakelic, A. Moura, \textit{Finite-dimensional representations of hyper loop algebras}, Pacific J. Math. \textbf{233} (2) (2007), pp. 371--402. DOI: 10.2140/pjm.2007.233.371


\bibitem{K} B. Kostant, \textit{Groups over $\mathbb Z$}, Algebraic Groups and Discontinuous Subgroups, Proc. Symp. Pure Math. IX, Providence, AMS (1966). DOI: https://doi.org/10.1007/b94535\_21


\bibitem{M} D. Mitzman, \textit{Integral Bases for Affine Lie Algebras and Their Universal Enveloping Algebras}, Contemp. Math.\textbf{ 40} (1983). DOI: http://dx.doi.org/10.1090/conm/040

\bibitem{NSS} E. Neher, A. Savage, P. Senesi, \textit{Irreducible finite-dimensional representations of equivariant map algebras}, Trans. Amer. Math. Soc. \textbf{364} (5) (2012), pp.  2619--2646. DOI: https://doi.org/10.1090/S0002-9947-2011-05420-6

\bibitem{NSS2} E. Neher, A. Savage, \textit{A survey of equivariant map algebras with open problems}, Contemp. Math.\textbf{ 602} (2013). DOI: http://dx.doi.org/10.1090/conm/602

\bibitem{O44} L. Onsager, \textit{Crystal statistics. i. a two-dimensional model with an order-disorder transition}, Phys. Rev.  \textbf{65} (3-4) (1944), pp. 117--149. DOI: https://doi.org/10.1103/PhysRev.65.117

\bibitem{R91} S. Roan, \textit{Onsager's algebra, loop algebra and chiral potts model}, Preprint Max-Planck-Inst. f\"ur Math., Bonn, MPI 91-70, (1991).

\end{thebibliography}
        \end{document}